\newtheorem{theorem}{Theorem}[section]
\newtheorem*{theorem*}{Theorem}
\newtheorem{proposition}[theorem]{Proposition}
\newtheorem{remark}[theorem]{Remark}
\newtheorem{problem}[theorem]{Problem}
\newtheorem{lemma}[theorem]{Lemma}
\newtheorem{corollary}[theorem]{Corollary}
\theoremstyle{definition}
\newtheorem{definition}[theorem]{Definition}
\newtheorem{example}[theorem]{Example}
\newtheorem*{definition*}{Definition}
\newtheorem*{example*}{Example}
\long\def\comment#1\endcomment{}
\begin{document}

\title[Families of CPRC surfaces and Plateau's problem]{Families of surfaces with constant ratio of principal curvatures and Plateau's problem}


\author*[1]{\fnm{Mikhail} \sur{Skopenkov}}\email{mikhail.skopenkov@gmail.com}

\author[1]{\fnm{Khusrav} \sur{Yorov}}


\affil[1]{\orgdiv{CEMSE}, \orgname{King Abdullah University of Science and Technology}, \orgaddress{ \city{Thuwal}, 
\country{Saudi Arabia}}}



\abstract{
This work is on surfaces with a constant ratio of principal curvatures. These CRPC surfaces generalize minimal surfaces but are much more challenging to construct. We propose a construction of a family of such surfaces containing a given minimal surface without flat points. This leads to a partial solution of Plateau's problem for CRPC surfaces. We obtain analogous results in isotropic geometry. This work illustrates a general approach to solving Euclidean problems by starting with their isotropic analogs. Besides, we apply the method of successive approximations and analytic majorization.  
}

\keywords{CRPC surface, principal curvatures, minimal surface, isotropic geometry, Plateau's problem, Schauder's estimates}

\maketitle

\section{Introduction}\label{sec-intro}

This work is on 
surfaces with a constant ratio of principal curvatures. These \emph{CRPC surfaces} generalize minimal surfaces but are much more challenging to construct. We therefore propose a construction of a family of such surfaces containing a given minimal surface (Theorem~\ref{thm-euclidean}). This leads to a partial solution of Plateau's problem, 
which is to span a given curve by a CRPC surface (Corollary~\ref{thm-plateau-euc} and Fig.~\ref{fig:5}, left).


These surfaces are a particular case of Weingarten surfaces, a classical subject of differential geometry. \emph{Weingarten surfaces} are those whose principal curvatures satisfy a fixed functional relation 
(essentially equivalent to a fixed relation 
between the mean curvature $H$ and the Gaussian curvature $K$). 
Within this class, CRPC surfaces can serve as an archetype for instances 
of Plateau's problem inaccessible to known methods. They have a 
geometric characterization as surfaces with a constant angle between asymptotic curves (in the case when $K<0$).

\begin{figure}[bh]
    \centering
\begin{overpic}
[width=0.37\linewidth]{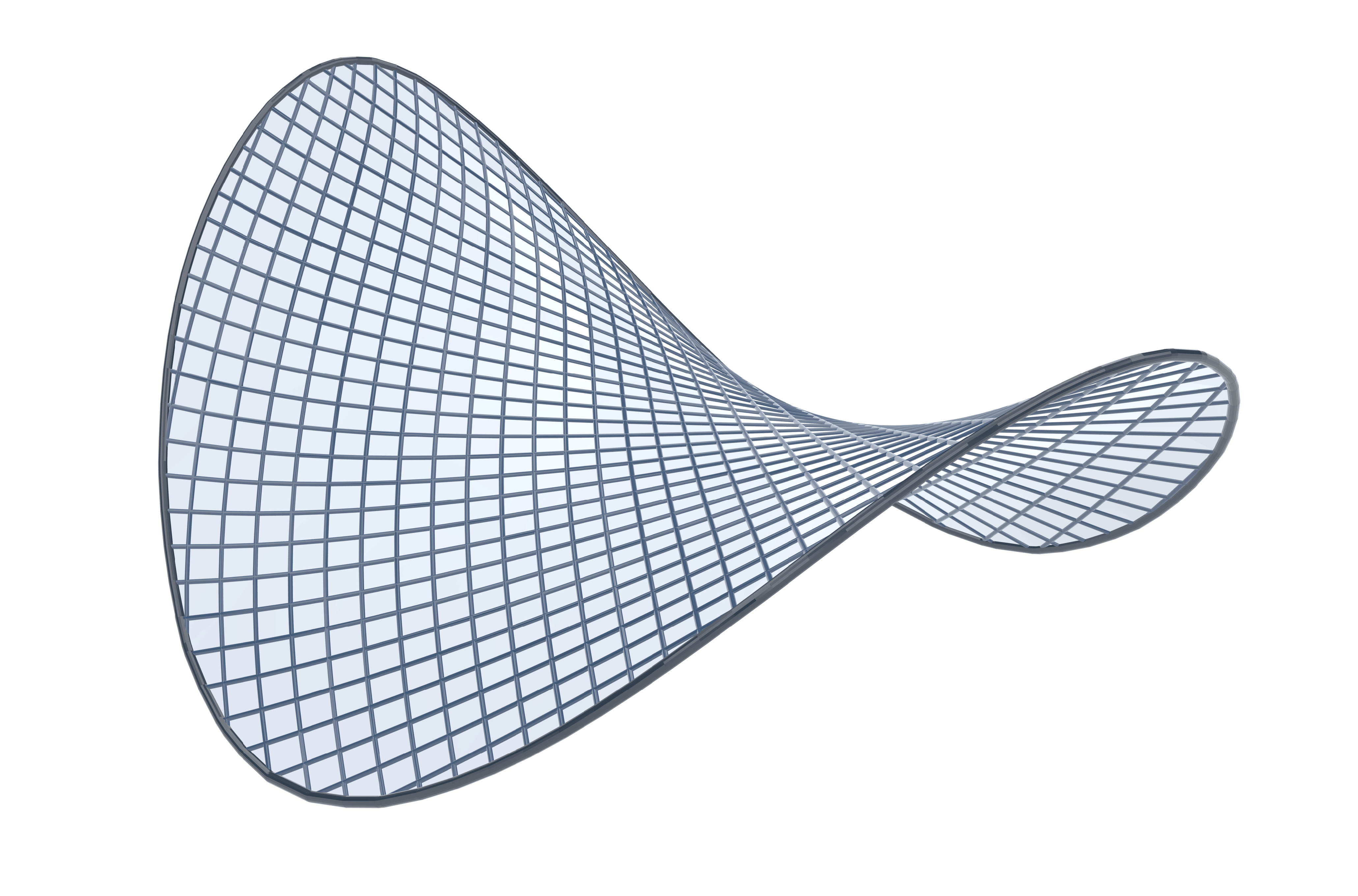}
\small
 \end{overpic}
\qquad
\begin{overpic}
[width=0.43\linewidth]{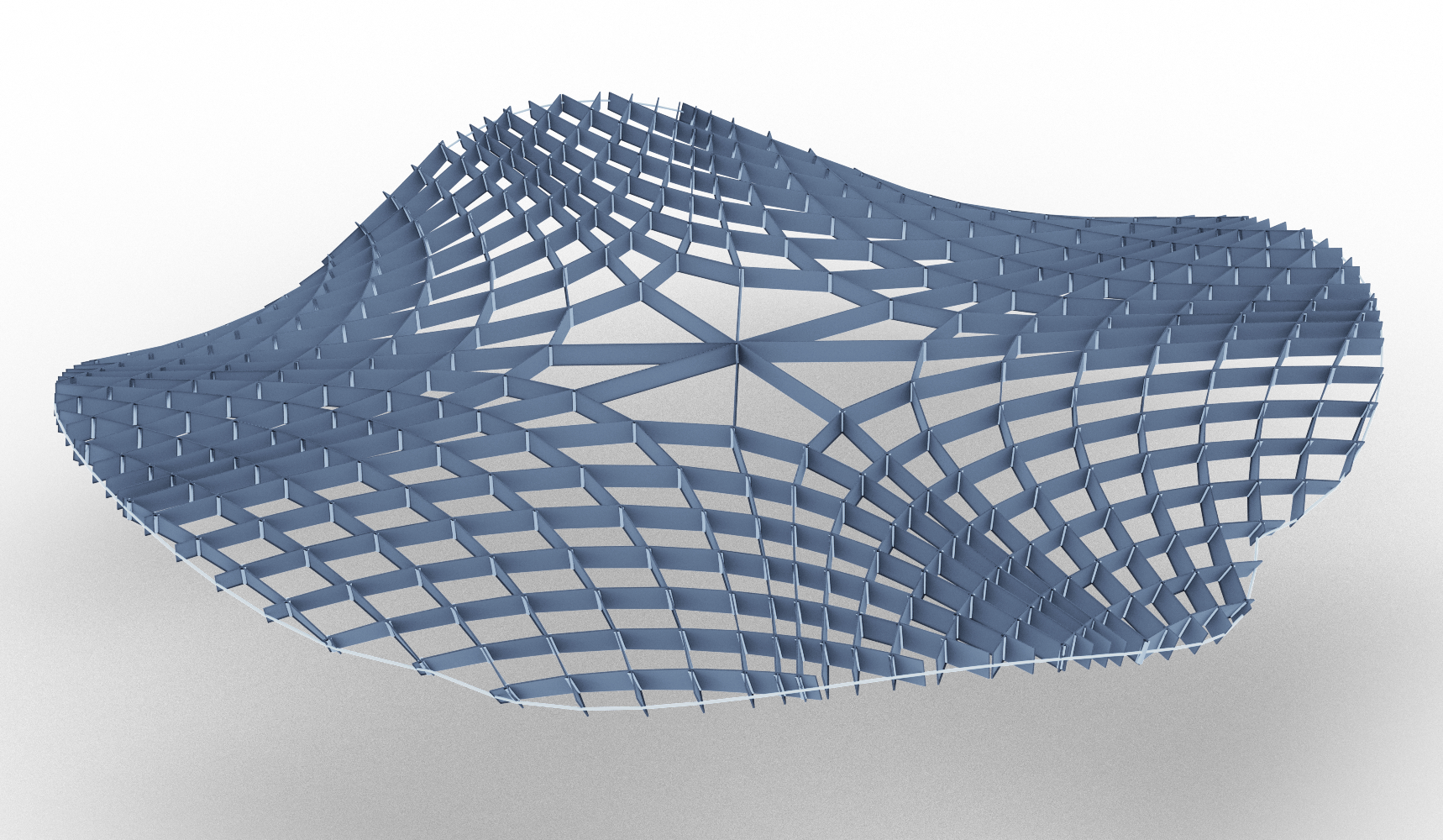}
 \end{overpic}
\caption{Left: A surface with a constant ratio of principal curvatures spanning a smooth curve. The asymptotic curves (dark) intersect at a constant angle ($80^\circ$). Right: An asymptotic gridshell obtained by bending originally rectangular strips and placing them orthogonal to a CRPC surface. The strips follow the asymptotic curves and intersect each other at a constant angle ($60^\circ$) except for a flat point 
in the middle. The gridshell was computed by optimizing an approximate CPRC surface.}
\label{fig:5}
\end{figure}

\begin{figure}[htb]
    \centering
\begin{overpic}
[width=0.4\linewidth]{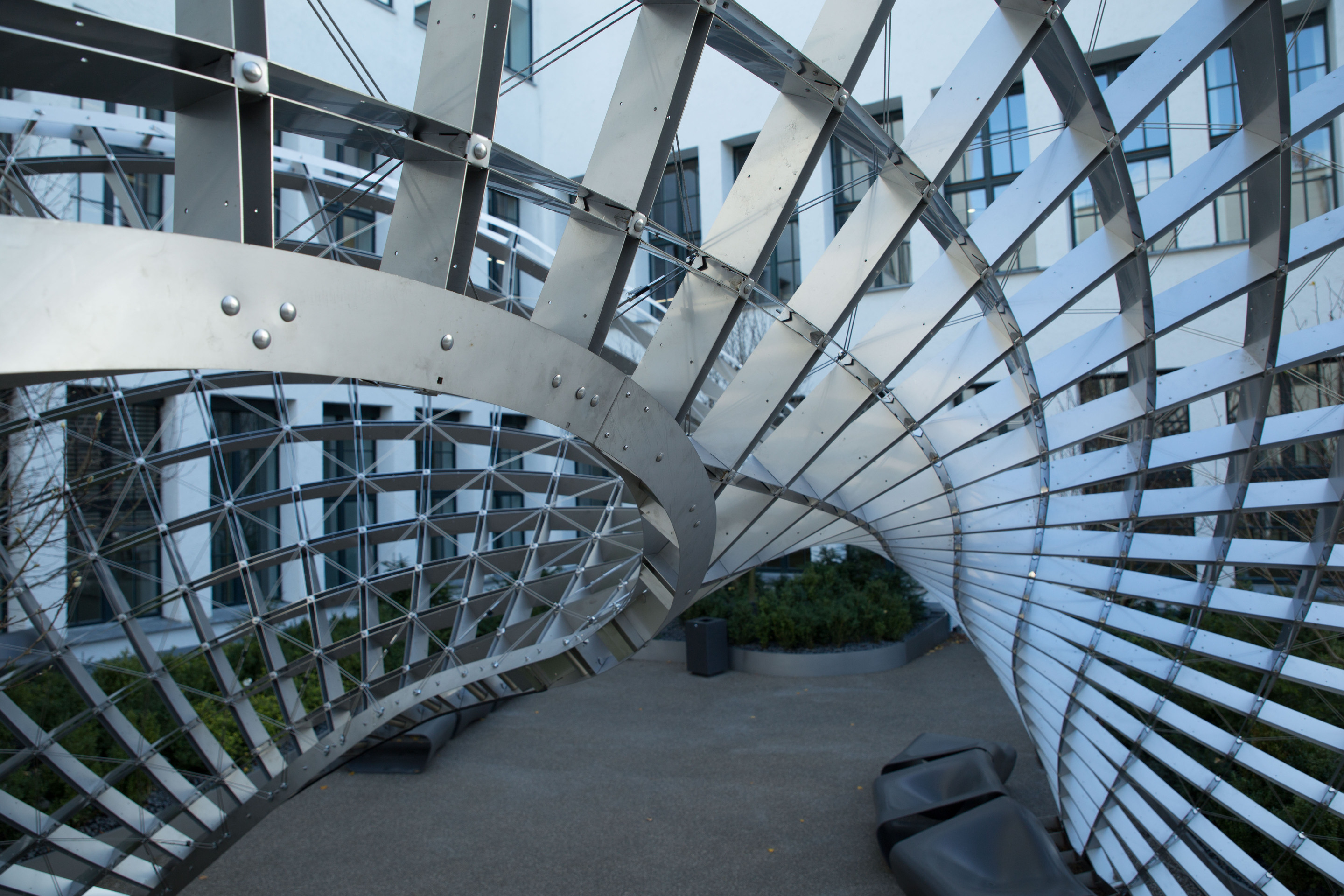}
\small
 \end{overpic}
\caption{An asymptotic gridshell by E.~Schling \cite{schling:2018}. The strips follow the asymptotic lines of a minimal surface and therefore intersect each other at a right angle.
}
\label{fig:4}
\end{figure}

Recent interest in CRPC surfaces extends beyond classical differential geometry to architectural freeform design.  One of the inspiring designs is asymptotic gridshells by E.~Schling \cite{schling:2018,schling-aag-2018}; see Fig.~\ref{fig:4}. They are built from rectangular strips. The strips are bent and placed orthogonal to the reference surface, making them follow the asymptotic curves. We therefore get two intersecting families of strips. In \cite{schling:2018,schling-aag-2018} the intersection angle is fixed at $90^\circ$, which is only possible for minimal surfaces and limits the range of shapes. On CRPC surfaces, the strips intersect at an arbitrary constant angle; therefore, joints remain identical, keeping fabrication simple, while the design space becomes significantly larger. See Fig.~\ref{fig:5}, right. 

This led to the question of whether \emph{the right angle between the asymptotic curves on a minimal surface can always be continuously changed to another constant angle while keeping the topology of the asymptotic net}. This was confirmed by numerical experiments by H.~Wang and H.~Pottmann 
\cite{HelmutHui2022}, and some of the resulting beautiful figures appeared in the Berlin Mathematical Calendar 2023.

\subsection{Contributions} \label{sec-contributions}

In this work, we contribute to this problem. Let us introduce the 
setup and state the main result. 

To minimize technicalities, we restrict ourselves to curves, surfaces, and families that are analytic up to the boundary. We define
an \emph{analytic Jordan curve} in $\mathbb{R}^n$ as the image of a periodic injective real analytic map $\mathbb{R}\to\mathbb{R}^n$ with nowhere vanishing derivative. Let $\Omega\subset\mathbb{R}^2$ be a domain bounded by an analytic Jordan curve in $\mathbb{R}^2$, and let $\overline\Omega$ be its closure.  A map $f\colon\overline{\Omega}\to\mathbb{R}^n$ that extends to a real analytic function in a neighborhood of $\overline{\Omega}$ is called \emph{real analytic function in the closed Jordan domain}.
If the map $f$ is injective and has a nondegenerate differential in $\overline{\Omega}$, then the image $f(\overline{\Omega})$ is called \emph{analytic surface with analytic boundary}. We mainly focus on surfaces that are graphs of real analytic functions $f\colon\overline{\Omega}\to\mathbb{R}$.
A map $f\colon\overline{\Omega}\times [a;b]\to\mathbb{R}^n$ that extends to a real analytic function in a neighborhood of $\overline{\Omega}\times [a;b]$ in $\mathbb{R}^3$ is 
a \emph{real analytic family of functions}. If for each $t\in [a;b]$, the restriction of $f$ to $\overline{\Omega}\times \{t\}\to\mathbb{R}^n$ is injective and has a nondegenerate differential, then the collection of images $f(\overline{\Omega}\times \{t\})$ is 
an \emph{analytic family of surfaces}. If $[c;d]\subset [a;b]$, then the restriction of the map $f$ to $\overline{\Omega}\times [c;d]$ is called \emph{restriction of the family}. 

Intersection points of analytic curves and surfaces come with multiplicities. If $(x(t),y(t),z(t))$ is a regular parametrization of an analytic curve intersecting the plane $ax+by+cz+d=0$ for $t=0$ (but not contained in the plane), then the maximal $m$ such that $i$-th derivative $ax^{(i)}(0)+by^{(i)}(0)+cz^{(i)}(0)=0$ for each $i=1,\dots,m-1$ is called the \emph{multiplicity} of the intersection point. 

A surface in $\mathbb{R}^3$ 
has a \emph{constant ratio $a$ of principal curvatures}, or is a \emph{CRPC surface}, if the principal curvatures $\kappa_1,\kappa_2\neq 0$ and $\kappa_1/\kappa_2=a$ or $\kappa_2/\kappa_1=a$ at each point. 
This is equivalent to $H^2/K=(a+1)^2/(4a)=\mathrm{const}$. A \emph{flat point} is one in which $\kappa_1=\kappa_2=0$. 

\begin{theorem}\label{thm-euclidean}
Each minimal surface $\Phi^0\subset \mathbb{R}^3$ with analytic boundary and no flat points (even on~$\partial\Phi^0$), which is the graph of a real analytic function in a closed Jordan domain, is contained in a unique (up to restriction) analytic family of surfaces $\Phi^s$ 
with the ratio~$s-1$ of principal curvatures and 
$\partial\Phi^s=\partial\Phi^0$.
\end{theorem}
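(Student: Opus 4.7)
The plan is to recast the CRPC condition as a one-parameter family of nonlinear elliptic PDEs for the graph function $f$ depending real-analytically on $s$ and reducing to the minimal surface equation at $s=0$, and then to solve it by the implicit function theorem at $(f^0, 0)$. Since $\Phi^0$ has no flat points, its Gaussian curvature $K(f^0) = -\kappa_1^2 < 0$ strictly on $\overline{\Omega}$, so $\sqrt{-K(f)}$ is real-analytic in the Hessian entries of $f$ throughout a $C^{2,\alpha}$-neighborhood of $f^0$. Selecting the sign of the mean curvature continuously extending $H(f^0) = 0$, the identity $H^2/K = s^2/(4(s-1))$ becomes, for $s$ near $0$, equivalent to
\[
F(f, s) := H(f) - c(s)\sqrt{-K(f)} = 0, \qquad c(s) := \frac{s}{2\sqrt{1-s}},
\]
with $F(f^0, 0) = 0$ and $F$ real-analytic on a neighborhood of $(f^0, 0)$ in $C^{2,\alpha}(\overline{\Omega}) \times \mathbb{R}$ with values in $C^{0,\alpha}(\overline{\Omega})$.

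Next I would compute the Fréchet derivative $L := D_f F(f^0, 0)$, which equals the linearization of the mean curvature operator at $f^0$ since $c(0) = 0$ annihilates the second term. Writing the minimal surface operator in divergence form gives $L u = \partial_i(A^{ij}(\nabla f^0)\,u_j)$ with $A^{ij} = (W^2 \delta_{ij} - f^0_i f^0_j)/W^3$ and $W = \sqrt{1 + |\nabla f^0|^2}$; this operator is symmetric, uniformly elliptic, and has no zero-order term, so the strong maximum principle forces its kernel on functions vanishing on $\partial\Omega$ to be trivial. By the Fredholm alternative, $L$ is an isomorphism from $\{u \in C^{2,\alpha}(\overline{\Omega}) : u|_{\partial\Omega} = 0\}$ onto $C^{0,\alpha}(\overline{\Omega})$. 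The implicit function theorem then yields a $C^{2,\alpha}$-family $s \mapsto f(\cdot, s)$, unique near $s = 0$, satisfying $f(\cdot, 0) = f^0$, $F(f(\cdot, s), s) = 0$, and $f(\cdot, s) = f^0$ on $\partial\Omega$ for $|s| < \varepsilon$. Continuity keeps $K < 0$ on $\Phi^s$, so $\Phi^s$ is CRPC with ratio $s - 1$ and $\partial \Phi^s = \partial \Phi^0$.

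The main obstacle is upgrading this to an analytic family in the strong sense of the paper: a map $f(x, y, s)$ extending real-analytically to a neighborhood of $\overline{\Omega} \times [-\varepsilon, \varepsilon]$ in $\mathbb{R}^3$, which requires joint analyticity up to the analytic boundary $\partial\Omega$. Here the successive-approximations and analytic majorization method announced in the abstract enters: the Newton-type iteration $L u_{n+1} = G(u_n, s)$ with zero Dirichlet data is combined with analytic Schauder estimates and the classical analyticity theorems for elliptic equations with real-analytic coefficients on real-analytically bounded domains, in order to majorize all Taylor coefficients of $f$ in $s$ and in local boundary-straightening coordinates by a convergent scalar series. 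Uniqueness up to restriction then follows from the uniqueness clause of the implicit function theorem near $s = 0$ together with analytic continuation along any interval containing~$0$.
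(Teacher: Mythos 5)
Your implicit-function-theorem approach to the ``soft'' part of the statement is genuinely different from the paper's. The paper does not invoke the IFT in Banach spaces; instead it directly constructs the Taylor coefficients $f^{(m)}=\partial_t^m f|_{t=0}$ by recursively solving the linear elliptic problems~\eqref{eq-elliptic} (Lemma~\ref{l-euc}), then bounds $\|f^{(m)}\|_{C^{2+1/2}}$ via Schauder's estimates (Corollary~\ref{cor-minimal-shauder}, Lemma~\ref{l-one-step-euc}) and majorizes them by the coefficients of the explicit scalar series~\eqref{eq-t(a)} (Lemma~\ref{l-reccurence-euc}), so that convergence of the formal series in $C^{2+1/2}$ is obtained by hand. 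This buys a concrete, quantitative description of the family, which the authors also exploit for the second-order approximation in Section~\ref{sec-proofs-variations}. Your analytic IFT at $(f^0,0)$, with invertibility of the Jacobi operator via the maximum principle, is cleaner and would deliver the $C^{\omega}$-in-$s$, $C^{2,\alpha}$-valued family (essentially Bernstein's Fundamental Lemma, which the paper explicitly mentions as background). Once that family exists, the clean way to finish is exactly what the paper does next: Friedman's boundary analyticity theorem (Theorem~\ref{th-petrowski-short}, Corollary~\ref{cor-crpr-analyticity}), applicable because the boundary and $f^0$ are analytic and $K<0$ persists for small $|s|$, gives real analyticity in $(x,y)$ up to $\partial\Omega$ for each fixed $s$, and Osgood's lemma then upgrades separate to joint analyticity.

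Where your proposal falls short is precisely the part you flag as ``the main obstacle.'' Your sketch there is off target: you propose a Newton-type iteration $Lu_{n+1}=G(u_n,s)$ with analytic majorization ``in $s$ and in local boundary-straightening coordinates.'' But once the analytic IFT has already produced analyticity in $s$, no iteration or $s$-majorization is needed; what is needed is a boundary-regularity theorem for nonlinear elliptic PDEs with analytic data, and you never name or invoke one. The paper's $t$-power-series construction and its majorization are what \emph{replace} the analytic IFT, not a supplement to it, so grafting both onto the argument conflates two roles. In its current form your proof does not actually establish that the family extends to a real analytic map on a neighborhood of $\overline{\Omega}\times[-\varepsilon,\varepsilon]$ in $\mathbb{R}^3$, which is the substance of the theorem beyond Bernstein's lemma. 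To close the gap you should replace the third paragraph with an appeal to Friedman's (or Morrey--Nirenberg's) analyticity-up-to-the-boundary theorem for scalar elliptic equations under the nondegeneracy $K<0$, followed by the real-analytic Osgood/Hartogs-type argument for joint analyticity in $(x,y,s)$.
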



\begin{corollary}[Partial solution to Plateau's problem for CRPC surfaces] \label{thm-plateau-euc} 
     Assume that an analytic Jordan curve in~$\mathbb{R}^3$ has 
     no more than four 
     common points (counting multiplicities) with any plane and no more than two common points (counting multiplicities) with any $z$-parallel plane. Then, for all sufficiently small~$s$, the curve can be spanned by an analytic surface with the ratio~$s-1$  of principal curvatures. 
\end{corollary}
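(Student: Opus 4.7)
The plan is to show that, under the two hypotheses, $\Gamma$ bounds a minimal surface $\Phi^0$ satisfying all assumptions of Theorem~\ref{thm-euclidean}: $\Phi^0$ is the graph of a real analytic function in a closed Jordan domain and has no flat points, even on the boundary. Once such a $\Phi^0$ is produced, Theorem~\ref{thm-euclidean} embeds it in an analytic family of CRPC surfaces $\Phi^s$ with $\partial\Phi^s=\Gamma$ and ratio of principal curvatures $s-1$, which proves the corollary for all sufficiently small~$s$.

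Since any $z$-parallel plane meets $\Gamma$ in at most two points counted with multiplicity, the vertical projection $\pi(\Gamma)\subset\mathbb{R}^2$ is a convex analytic Jordan curve bounding a closed convex Jordan domain $\overline{\Omega}$. Rad\'o's theorem then provides a unique minimal surface spanning $\Gamma$, namely the graph $z=f(x,y)$ of the solution of the minimal surface equation on $\overline{\Omega}$ with boundary values prescribed by $\Gamma$. Because $\partial\Omega$ and the boundary datum are real analytic, the classical boundary regularity for nonlinear analytic elliptic equations (H.~Lewy) upgrades $f$ to a real analytic function in a neighbourhood of $\overline{\Omega}$. Thus $\Phi^0$ is the graph of a real analytic function in the closed Jordan domain $\overline{\Omega}$, as required.

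The crux is to rule out flat points of $\Phi^0$, where the four-point hypothesis enters. In a conformal chart $w$ near a flat point $p\in\Phi^0$, the Weierstrass representation gives, for the height of the surface along the normal at $p$, an expansion of the form $\mathrm{Re}(c\,w^{n+2})+O(|w|^{n+3})$ with $n\geq 1$ (the branching order of the Gauss map at $p$) and $c\neq 0$. Since every affine function of $\mathbb{R}^3$ restricts to a harmonic function on a conformally parameterized minimal surface, the intersection $\Phi^0\cap T_p\Phi^0$ is locally the zero set of a harmonic function and hence consists of $2(n+2)\geq 6$ analytic arcs meeting at $p$; because a nonconstant harmonic function on a disc admits no bounded one-sign component, each such arc continues until it reaches $\partial\Phi^0=\Gamma$. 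For an interior flat point this yields at least six intersections of $T_p\Phi^0$ with $\Gamma$ counted with multiplicity, contradicting the hypothesis. For a boundary flat point $p$ the same normal form, restricted to a half-disc, shows that $T_p\Phi^0$ meets $\Gamma$ at $p$ itself with multiplicity at least $n+2\geq 3$ (from the leading term of the height along $\Gamma$) and that at least $n+2\geq 3$ further arcs reach $\Gamma$ at other points, again producing at least six intersections counted with multiplicity.

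The main obstacle I anticipate is precisely this boundary flat point case: the tangent plane $T_p\Phi^0$ already contains the tangent of $\Gamma$, so some local branches of the harmonic zero set may run along $\Gamma$ rather than into the interior, and converting the intuitive picture into an honest count of at least six intersections counted with multiplicity requires the explicit conformal normal form at $p$ together with the analyticity of $\Gamma$ up to $p$. Once the ``at least six'' bound is in hand, $\Phi^0$ has no flat points, and Theorem~\ref{thm-euclidean} delivers the required family $\Phi^s$ spanning $\Gamma$ for all sufficiently small $s$.
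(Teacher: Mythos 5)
Your overall strategy is the paper's: project $\Gamma$ to a convex Jordan domain, solve Plateau's problem there to get a graph $\Phi^0=\mathrm{graph}(u)$, use Lewy for analyticity up to the boundary, rule out flat points via the four-point hypothesis, and then invoke Theorem~\ref{thm-euclidean}. The interior-flat-point argument you sketch (Weierstrass normal form, height over $T_p\Phi^0$ is harmonic with nodal set of $2(n+2)\ge 6$ arcs, arcs must reach the boundary by the maximum principle) is exactly the content of the Nitsche reference the paper cites, so that part is sound.

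The genuine gap is the one you flag yourself: the boundary flat point case. Your count ``multiplicity $\ge n+2$ at $p$ plus $n+2$ further arcs'' is not justified as written. In the half-disc picture, some of the $2(n+2)$ nodal arcs are cut off by $\partial\Omega$; others may be tangent to $\Gamma$ at $p$, in which case they are already being counted by the multiplicity at $p$ and cannot be counted again as ``further arcs.'' Disentangling this would require an explicit boundary normal form and a careful case analysis of how the nodal arcs meet $\Gamma$ at $p$. The paper sidesteps the issue entirely with a cleaner move: since $u$ is analytic up to $\overline\Omega$ and its graph has no $z$-parallel tangents along $\Gamma$, Lewy's extension lets one enlarge the surface to $\widetilde\Phi=\mathrm{graph}(\tilde u)$ over a slightly scaled domain $\widetilde\Omega\supset\overline\Omega$. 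Then one observes that the four-point condition on $\Gamma$ is stable under $C^\infty$-small perturbations of the curve, so $\partial\widetilde\Phi$ also satisfies it for $\varepsilon$ small, and every point of $\Phi^0$ --- boundary points included --- is an \emph{interior} point of $\widetilde\Phi$, to which Nitsche's interior-flat-point lemma applies directly. This extension trick is precisely what removes the boundary difficulty, and without it (or some equivalent substitute) your proof is incomplete.
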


The solution to Plateau's problem given by Corollary~\ref{thm-plateau-euc} is \emph{never} unique because both $\Phi^s$ and $\Phi^{s/(s-1)}$ have the same ratio~$s-1$ of principal curvatures. In general, Plateau's problem for CRPC surfaces may have 
uncountably many solutions (see Example~\ref{ex-catenoid} below).




The absence of flat points is essential in Theorem~\ref{thm-euclidean}. 
A minimal surface with a ``nonmultiple'' flat point is \emph{never} contained in a real analytic family of CRPC surfaces; the family necessarily acquires singularities (see Example~\ref{ex-no-xxxt} below).  
However, those singularities are mild, and it would be interesting to establish the existence of a family with reasonable regularity, especially when the minimal surface has a nontrivial topology. We conjecture that a minimal surface with flat points is still contained in a continuous family of twice differentiable CPRC surfaces if we do not fix the boundary. 

In this work, we address the case of surfaces with flat points just briefly, by embedding them into a 
family of \emph{approximate} CRPC surfaces. Since the approximation quality can be further improved by 
optimization, this is sufficient for applications such as the design of asymptotic gridshells in Fig.~\ref{fig:5}, right. 

\subsection{Main idea}

We advocate a general approach to difficult problems in Euclidean geometry, which is \emph{to start with the same problem in isotropic geometry}.
Isotropic geometry (or, more precisely, simply isotropic geometry)
is one of the classical non-Euclidean geometries, based on the group of spatial affine transformations preserving the \emph{isotropic semi-norm} $\|(x,y,z)\|_i:=\sqrt{x^2+y^2}$ \cite{sachs}.
It can be viewed as a structure-preserving simplification of Euclidean geometry. The solution in isotropic geometry is usually simpler, has fewer technical details, and can often be adapted to the solution of the original problem. 

The 
results stated above were derived exactly in this way. First, we proved their analogs in isotropic geometry (see Theorem~\ref{thm-isotropic-equivalent} and Corollary~\ref{thm-plateau-iso} below) and then modified the arguments 
for the Euclidean case. \emph{Isotropic minimal surfaces} are the graphs of harmonic functions, and \emph{isotropic CPRC surfaces} are the graphs of functions satisfying the nonlinear PDE (depending on a parameter $t$; see Section~\ref{sec-isotropic}) 
\begin{equation}\label{eq-isotropic-crpc}
    f_{xx} + f_{yy} = t \sqrt{(f_{xy})^2 - f_{xx}f_{yy}}.
\end{equation}

To construct isotropic CPRC surfaces, we reincarnate the classical methods of \emph{successive approximations} and \emph{analytic majorization}. 
We search for a solution of~\eqref{eq-isotropic-crpc} in the form of series
\begin{equation}\label{eq-expansion}
  f(x,y,t)=f^{(0)}(x,y)+tf^{(1)}(x,y)+\frac{t^2}{2}f^{(2)}(x,y)+\dots+
  \frac{t^m}{m!}f^{(m)}(x,y)+\dots.
\end{equation}
Here $f^{(0)}(x,y)$ is a harmonic function whose graph represents a given isotropic minimal surface. The functions $f^{(m)}(x,y)$ are found recursively, 
solving an infinite system of Poisson equations~\eqref{eq-poisson} obtained by differentiating~\eqref{eq-isotropic-crpc} $m$ times with respect to $t$ at $t=0$. Bounding the H\"older 
norms of solutions by the Taylor coefficients of specially constructed analytic function~\eqref{eq-l-reccurence}, we establish the convergence of series~\eqref{eq-expansion} for small $t$. Thus, we get a solution to~\eqref{eq-isotropic-crpc}. 
The approximate CRPC surfaces mentioned above are obtained by the truncation of series~\eqref{eq-expansion} at $m=2$.
The Euclidean case is 
parallel, just 
more technical.

\subsection{Previous work}


\subsubsection*{Minimal surfaces}
Classical Plateau's problem asks for a surface of minimal area spanning a given simple curve \(\Gamma\) in $\mathbb{R}^3$.
There are remarkable books~\cite{Gilbarg-Trudinger-83,nitsche-1975,dierkes-1992-II} on this subject, and we focus on a few of the most relevant results here.

The modern theory of minimal surfaces begins with the celebrated works 
by Bernstein, Korn, and M\"untz \cite{Bernstein-27, Korn-09, Muentz-11}.  
M\"untz constructed minimal surfaces by the method of successive approximations, starting from the graph of a harmonic function. The latter is a minimal surface in isotropic geometry. So, this was an early example of the ``start with the same problem in isotropic geometry'' approach, despite M\"untz did not mention isotropic geometry.
Bernstein introduced a general result, sometimes called \emph{Bernstein's Fundamental Lemma} \cite[Theorem~B]{Bernstein-27}, on the existence of an analytic family of solutions to a nonlinear elliptic PDE depending on a parameter. Theorems~\ref{thm-euclidean} and~\ref{thm-isotropic-equivalent} can be viewed as variations of this result.
Note that these theorems assert analyticity up to the boundary, not guaranteed by Bernstein's lemma. 

Plateau's problem was essentially solved by Douglas \cite{Douglas31} and Rad\'o \cite{Rado30}, who proved that any rectifiable Jordan curve \(\Gamma\) in \(\mathbb{R}^3\) can be spanned by a minimal surface. Hardt and Simon \cite{HardtSimon79} strengthened this result by showing that any finite collection of disjoint \(C^{1+\alpha}\) Jordan curves with \(0<\alpha<1\) can be spanned by a compact embedded minimal surface of class \(C^{1+\alpha}\) up to the boundary, where \(C^{k+\alpha}\) denotes the H\"older space (cf.~Definition~\ref{def-holder} below).  
If $\Gamma$ is analytic, then the surface is analytic up to the boundary~\cite[\S334]{nitsche-1975}. The analyticity of smooth solutions of general non-linear elliptic systems of PDEs was proved by Friedman \cite{Friedman-58} and Morrey--Nirenberg \cite{Morrey-Nirenberg-1957}, generalizing earlier results by Bernstein and Petrowski \cite{petrowsky-1996-i}. 
Finn's criterion \cite[Theorem 13.14]{Gilbarg-Trudinger-83} asserts that we can guarantee that the minimal surface is a graph of a function if the projection of \(\Gamma\) to the $xy$-plane is the boundary of a convex domain. 

\subsubsection*{CMC surfaces}

A natural generalization is surfaces with constant mean curvature $H$ (CMC surfaces).

Serrin~\cite[Theorem~1]{Serrin70} proved an analog of Finn's criterion for CMC surfaces. Namely, if \(\Gamma\) is a \(C^{2+\alpha}\) curve and its projection to the $xy$-plane is a Jordan curve with inward curvature at least $2H$, then there is a function whose graph is a surface with constant mean curvature $H$ spanning \(\Gamma\). The solution is unique if it exists. For boundary refinements and further developments, see the survey by L{\'o}pez~\cite{LopezBook13}.

Hildebrandt~\cite{Hildebrandt70} solved Plateau's problem for CMC surfaces with sufficiently small $H$. More precisely, if \(\Gamma\) lies in a ball of radius \(r\), then for any \(|H|\le 1/r\) there exists a surface with constant mean curvature $H$ spanning \(\Gamma\), and this bound is sharp. An analogous result holds when \(\Gamma\) lies in a cylinder; see~\cite{GulliverSpruck71}.

Koiso~\cite{Koiso2002} showed that, under suitable technical restrictions, any CMC surface is contained in a non-trivial family of CMC surfaces with the same boundary. 
This result is similar to Theorem~\ref{thm-euclidean}. However, the proof in the CMC case is much simpler than in the CRPC case: just one application of Schauder's estimates is sufficient, without the need for successive approximations and analytic majorization.

\subsubsection*{CGC surfaces}

Surfaces with constant Gaussian curvature $K$ 
are described by the nonlinear Monge--Amp\`ere equation. 
Such equations are an area of active research \cite{Figalli-2017, caffarelli-cabre-1995}.
We mention just one result by Guan and Spruck~\cite{GuanSpruck2002}:
for any positive number $K$, if  \(\Gamma\) is spanned by a \(C^{2}\) locally convex immersed surface (and locally strictly convex along its boundary) with the Gaussian curvature \emph{at least} \(K\), then it 
can be spanned by a locally strictly convex immersed surface with constant Gaussian curvature \emph{exactly} \(K\). 

\subsubsection*{Weingarten surfaces}

Caffarelli, Nirenberg, and Spruck \cite{caffarelli-1988} studied Plateau's problem for Weingarten surfaces satisfying the general equation 
$\phi(\kappa_1,\kappa_2)=\psi(x,y)$, where $\kappa_1$ and $\kappa_2$ are the principal curvatures, 
$\phi$ and $\psi$ are given functions. They proved the existence and uniqueness of a solution (among the graphs of functions) under certain convexity assumptions, including $\kappa_1\phi_{\kappa_1}+\kappa_2\phi_{\kappa_2}>0$. This does not apply to CRPC surfaces because any function of the form $\phi(\kappa_1,\kappa_2)=f(\kappa_1/\kappa_2)$ satisfies $\kappa_1\phi_{\kappa_1}+\kappa_2\phi_{\kappa_2}=0$. Ivochkina, Lin, and Trudinger \cite{lin-1994} studied the equation $H/K=\psi(x,y)$ and generalizations. See a survey by Jiao and Sun~\cite{jiao-sun-2022}.

\subsubsection*{CRPC and related surfaces}

Surfaces with a constant ratio of principal curvatures are much less studied, and Plateau's problem for them is widely open. They are archetypal for exploring the areas where previously known methods break down. CRPC surfaces are described by the slightly more complicated non-linear equation $H^2/K=\mathrm{const}$ than the Monge--Amp\`ere equation.
The only known closed-form examples distinct from minimal surfaces are 
the rotational and helical CRPC surfaces \cite{hopf-1951,kuehnel-2013,mladenov+2003,mladenov+2007,lopez-pampano-2020,HelmutHui2022, Liu-23}. In~\cite{riveros+2012,riveros+2013,staeckel-1896}, rotational CRPC surfaces with $K<0$ have been characterized via isogonal asymptotic parameterizations. 

A surface is called \emph{linear Weingarten} if there is a fixed linear relation between the two principal curvatures 
at each point. Recently, L{\'o}pez and P{\'a}mpano~\cite{lopez-pampano-2020} have classified all rotational linear Weingarten surfaces, which are CRPC surfaces when the intercept of the relation is zero. Moreover, it has been shown that linear Weingarten surfaces are rotational if foliated by a family of circles \cite{lopez-2008}. 


In isotropic geometry, the simplest CRPC surfaces 
are paraboloids with $z$-parallel axis; for them, both $H$ and $K$ are constant. In our previous work~\cite{Yorov-Pottmann-Skopenkov-23}, we classified helical, ruled, channel, and translational isotropic CRPC surfaces (the latter --- in the case when one of the parameter lines is planar). Those surfaces come in families, discussed in detail in the next section.  
Surfaces with a constant ratio of the isotropic mean and Gaussian curvatures are studied in \cite{isometric-isotropic} in terms of infinitesimal isometries.

Surfaces with a linear relation between 
$H$ and $K$ in hyperbolic geometry are surveyed in~\cite{lopez-2008}. Gálvez, Martínez, and Milán~\cite{galvez-2004} proved the existence of such surfaces spanning a planar Jordan curve.



\subsection{Organization of the paper}

The paper is organized as follows. Section~\ref{sec-proofs-example} collects closed-form examples
of families of CRPC surfaces containing known minimal surfaces, both in Euclidean and isotropic geometry. This section also contains a brief introduction to the latter. 
Sections~\ref{sec-proofs-smooth}–\ref{sec-euc} present the proofs of main results in isotropic and Euclidean geometry 
respectively. The proofs are essentially the same, but the former contains fewer technicalities and thus might give better insight. Also, Section~\ref{sec-proofs-smooth} introduces necessary tools from PDEs. Formally, Sections~\ref{sec-proofs-example}--\ref{sec-euc} are independent (except for using a few common simple lemmas) so that a reader experienced in PDEs can directly proceed to the proof of the main results in Section~\ref{sec-euc}.
Section~\ref{sec-proofs-variations} computes the initial terms of \eqref{eq-expansion} to obtain a closed-form second-order approximation to an isotropic CPRC surface. 
In a separate publication~\cite{yorov2025}, we use this approximation to initialize numerical optimization toward a Euclidean CRPC surface with a prescribed ratio of principal curvatures, boundary, and flat points, enabling asymptotic gridshells with a constant node angle (Fig.~\ref{fig:5}, right).

\section{Closed-form examples}\label{sec-proofs-example}

We begin with a few known closed-form examples of families of CRPC surfaces containing minimal surfaces. In the process, we give an exposition of the most relevant known results on CRPC surfaces. 
In this section, we understand 
families of surfaces informally, in a broader sense than in Section~\ref{sec-intro}.

\subsection{Euclidean CRPC surfaces}

The only known non-minimal CRPC surfaces written in a closed form are rotational and helical ones. They form two families that contain the simplest minimal surfaces, the catenoid and the helicoid.

\begin{example} \label{ex-catenoid}
In the cylindrical coordinate system $(r,\phi,z)$, consider 
the (half of the infinite) catenoid
$
z=\mathrm{arcosh}\,r,
$
where 
$
r\ge 1.
$
Then the catenoid is contained (as the limiting case
$a\to -1$) in the analytic family of unbounded rotational surfaces with the same boundary and the ratio $a$ of principal curvatures:
$$
z=\int_1^{r}\frac{d\rho}{\sqrt{\rho^{-2a}-1}}=\frac{r^{1+a}}{1+a}
    \, {}_2F_1\left(\frac{1}{2},\frac{1}{2}+\frac{1}{2 a}; \frac{3}{2}+\frac{1}{2 a}; r^{2a}\right)
    - \frac{1}{2a}B\left(\frac{1}{2},\frac{1}{2}+\frac{1}{2 a}\right),
    \quad\, r\ge1, \,
    a\in\left(-1;-\frac{1}{3}\right).
$$
Here ${}_2F_1(\alpha,\beta;\gamma;z)$ is 
the Gauss hypergeometric function, and $B(\alpha,\beta)$ is the beta function. The integral formula is valid for any $a<0$, but the right side 
requires $a\ne -1,-1/3,-1/5,\dots$. See~\cite[Eq.~(4)]{Yorov-Pottmann-Skopenkov-23}.

For $0<a\le 1$, there is a rotational $C^2$ surface with the same boundary and the ratio $a$ of principal curvatures, depending on an additional parameter $c\ge 1$:
$$
z=\int_r^{1}\frac{d\rho}{\sqrt{c\rho^{-2/a}-1}}, \qquad 0\le r\le1, \quad 0<a\le 1,\quad c\ge 1.
$$
Thus, for each $a\in (0;1]$, Plateau's problem for the unit circle 
has uncountably many solutions. For instance, any spherical cap spanning the 
circle has the ratio $a=1$ of principal curvatures. 
For $a\ne 1$, the surface has a flat point on the $z$-axis and is not analytic at the point unless $a=1,1/3,1/5,\dots$.
\end{example}

The closed-form formula for the family is quite involved and contains hypergeometric functions. In the case of a helicoid, the expression is even more involved (see \cite{Liu-23}).
Notice that we cannot embed a helicoid in a family of ruled CRPC surfaces rather than helical ones because there are no such surfaces besides the helicoid itself \cite[Proposition~15]{Yorov-Pottmann-Skopenkov-23}. 

Thus, we turn to isotropic geometry, where many explicit examples are available, and we get insight.


\subsection{Isotropic Geometry}
\label{sec-isotropic}

Recall that the \emph{isotropic semi-norm} in space with coordinates $x,y,z$ is defined as $\|(x,y,z)\|_i := \sqrt{x^2+y^2}$. An affine transformation of $\mathbb{R}^3$ that scales the isotropic semi-norm by a constant factor has the form
\[
\mathbf{x}' = A\cdot \mathbf{x} + \mathbf{b}, \quad
A = \begin{pmatrix}
\pm h_1 & \mp h_2 & 0\\[2pt]
h_2 & h_1 & 0\\[2pt]
c_1 & c_2 & c_3
\end{pmatrix},
\]
for some parameters $\mathbf{b}\in\mathbb{R}^3$ and $h_1,h_2,c_1,c_2,c_3\in\mathbb{R}$. These transformations form the 8-parameter group $G^8$ of \emph{general isotropic similarities}. The \emph{isotropic congruences} form the 6-parameter subgroup defined by
\[
h_1=\cos\phi, \quad h_2=\sin\phi, \quad c_3=1.
\]
These transformations appear as Euclidean congruences in the projection onto the plane $z=0$, known as the \emph{top view}. \emph{Isotropic distances} between points and \emph{isotropic angles} between lines are defined as, respectively, Euclidean distances and angles in the top view. See \cite{strubecker:1941,strubecker:1942,strubecker:1942a,sachs} for a comprehensive theory.

In isotropic differential geometry, we restrict ourselves to \emph{admissible surfaces}, that is, having no $z$-parallel tangent planes. Such surfaces can be locally represented as graphs of functions $z=f(x,y)$.
It is natural to quantify curvature in a given direction using a second-order invariant (under isotropic congruences) vanishing for planes. Thus, \emph{isotropic normal curvature} in a tangent direction $\mathbf{t}=(t_1,t_2,t_3)$ with $\|\mathbf{t}\|_i= t_1^2+t_2^2=1$ is defined as the second directional derivative of $f$:
\[
\kappa_n(\mathbf{t})=(t_1,t_2)\cdot
\begin{pmatrix}
f_{xx} & f_{xy}\\[2pt]
f_{yx} & f_{yy}
\end{pmatrix}
\cdot
\begin{pmatrix}
t_1\\[2pt]
t_2
\end{pmatrix},
\]
and \emph{isotropic shape operator} is the Hessian $\nabla^2(f)$ of $f$. Its eigenvalues $\kappa_1, \kappa_2$ are \emph{isotropic principal curvatures}. 
The \emph{isotropic mean} and \emph{Gaussian curvatures} are defined by
\begin{equation}\label{eq-curvatures}
 H:=\frac{\kappa_1+\kappa_2}{2}=\frac{f_{xx}+f_{yy}}{2}, \quad K:=\kappa_1\kappa_2=f_{xx}f_{yy}-f_{xy}^2.   
\end{equation}

An admissible surface has a \emph{constant ratio $a$ of isotropic principal curvatures}, or is an \emph{isotropic CRPC surface}, if $K\neq 0$ and $\kappa_1/\kappa_2=a$ or $\kappa_2/\kappa_1=a$ at each point.
Equivalently, $H^2/K=(a+1)^2/(4a)=\mathrm{const}$. For $K<0$, this is equivalent to~\eqref{eq-isotropic-crpc} with $t=(a+1)/\sqrt{|a|}$.
For instance, an \emph{isotropic minimal surface}, i.e., the one with $H=0$ everywhere, has ratio $a=-1$. For $a=1$, we get a unique up to general isotropic similarity CRPC surface $2z = x^2+y^2$, also known as the \emph{isotropic unit sphere} \cite[Section~62, p.~402]{strubecker:1942}. 


The asymptotic directions of a surface are the same in Euclidean and isotropic geometry. For an isotropic CRPC surface with $a<0$, the asymptotic curves meet under the constant isotropic angle $\gamma$, given by $\cot^{2}(\gamma/2)=|a|$ \cite[Section~2.1]{Yorov-Pottmann-Skopenkov-23}. 

\begin{figure}[htb]
\hfill
\begin{overpic}[width=0.2\textwidth]{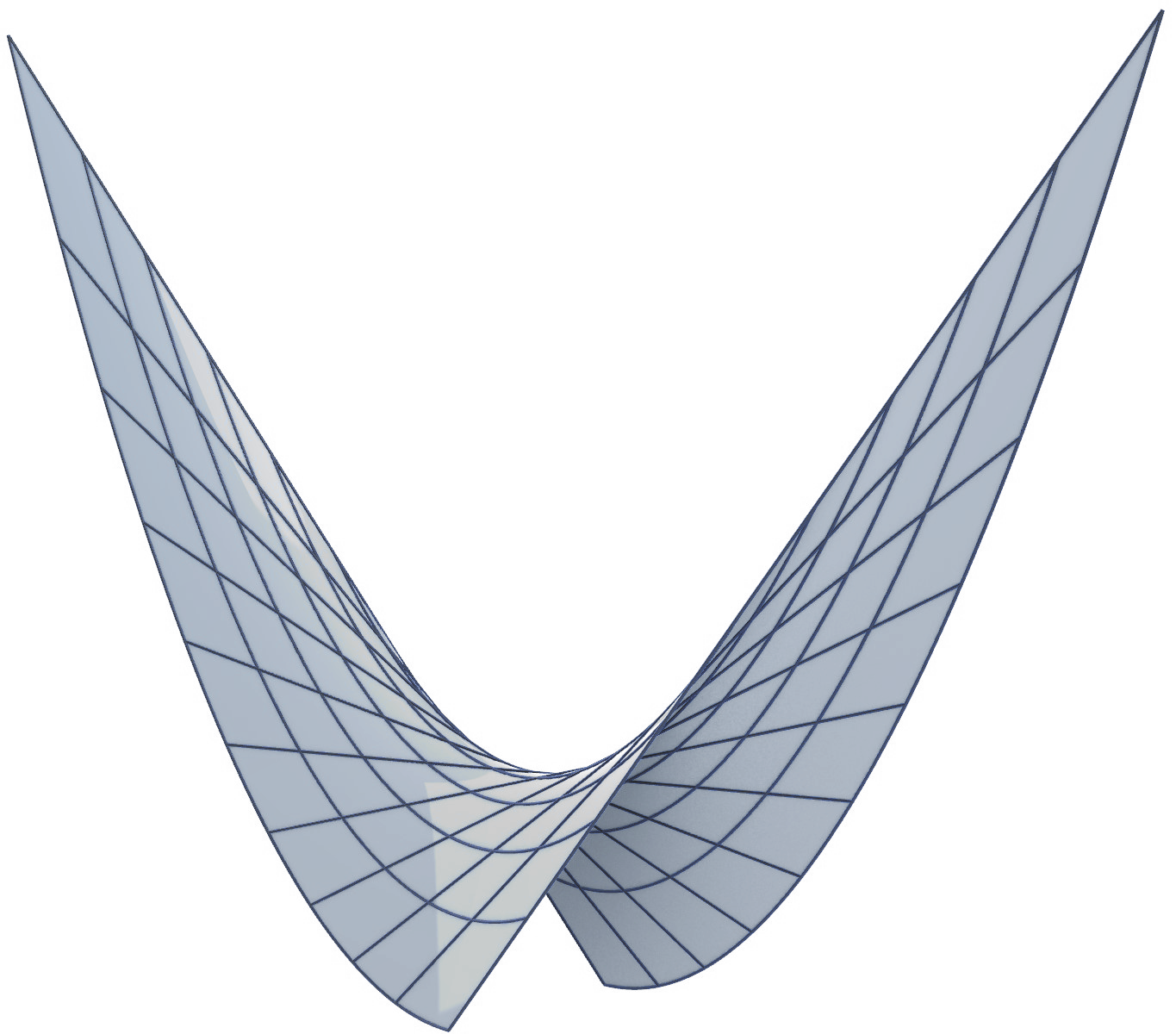}
\end{overpic}
\hfill
\begin{overpic}[width=0.2\textwidth]{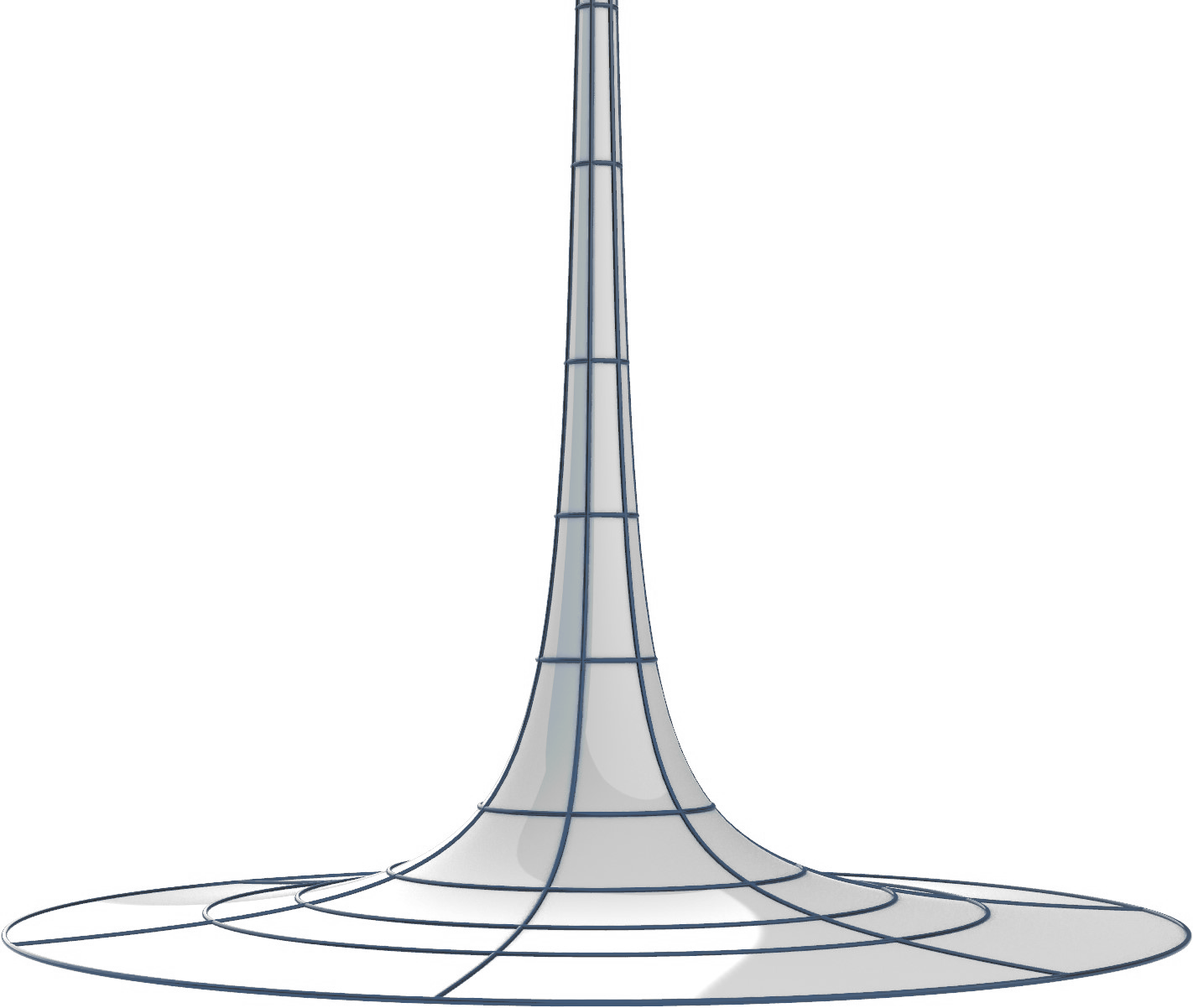}
\end{overpic}
\hfill
\begin{overpic}[width=0.15\textwidth]{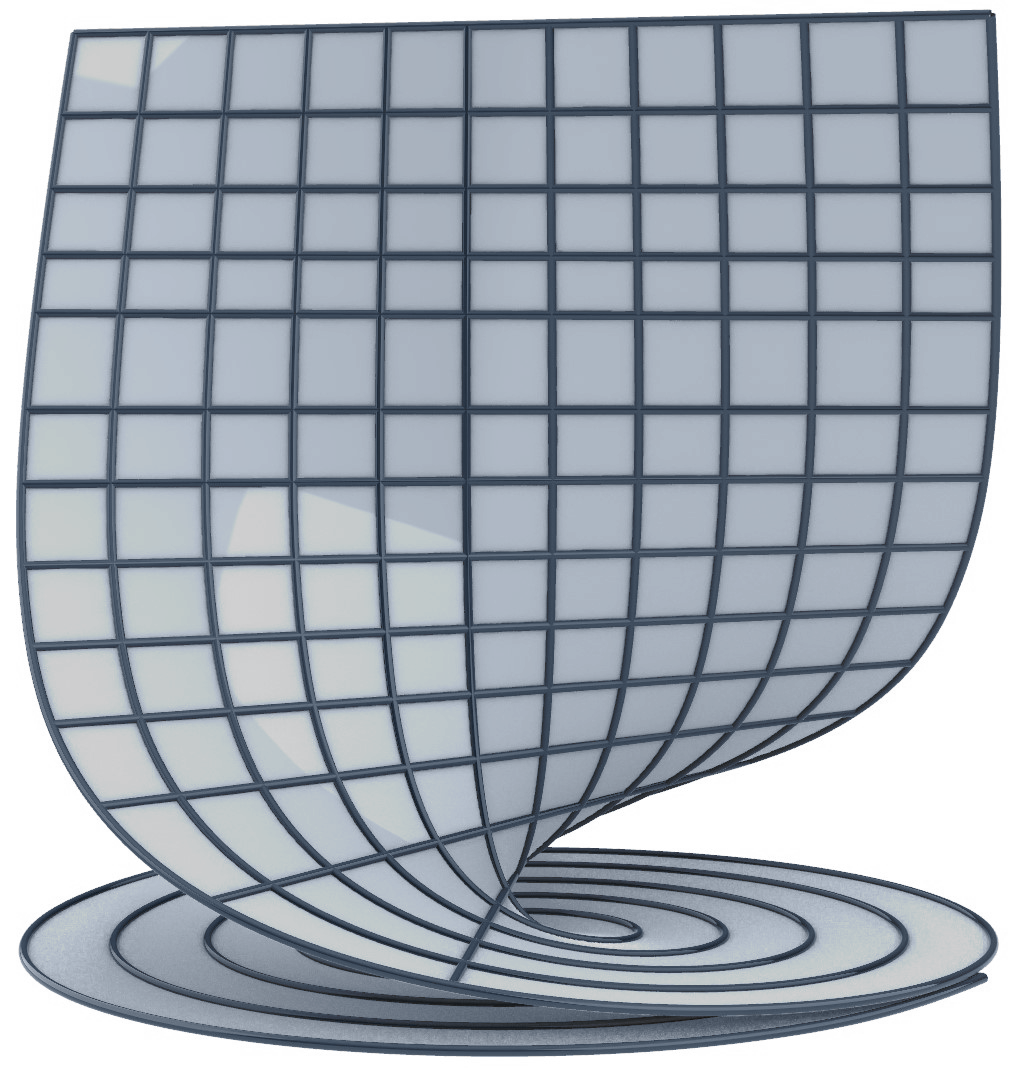}
\end{overpic}
\hfill
\begin{overpic}[width=0.18\textwidth]{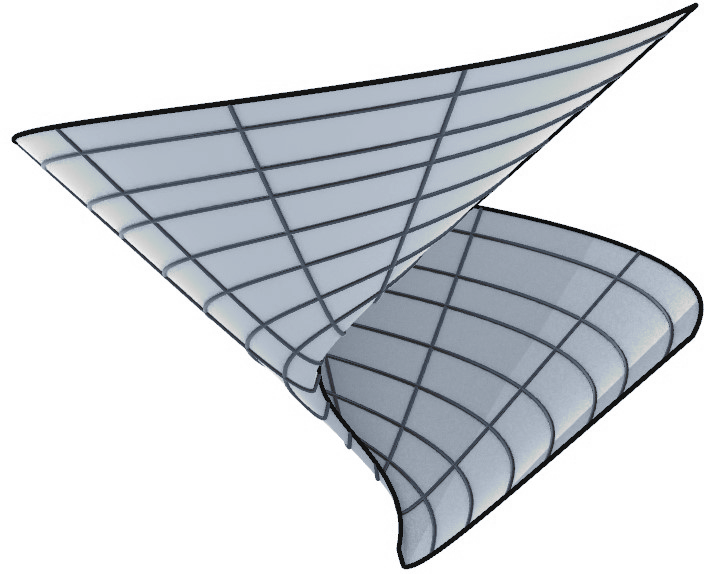}
\end{overpic}
\hfill
\begin{overpic}[width=0.23\textwidth]{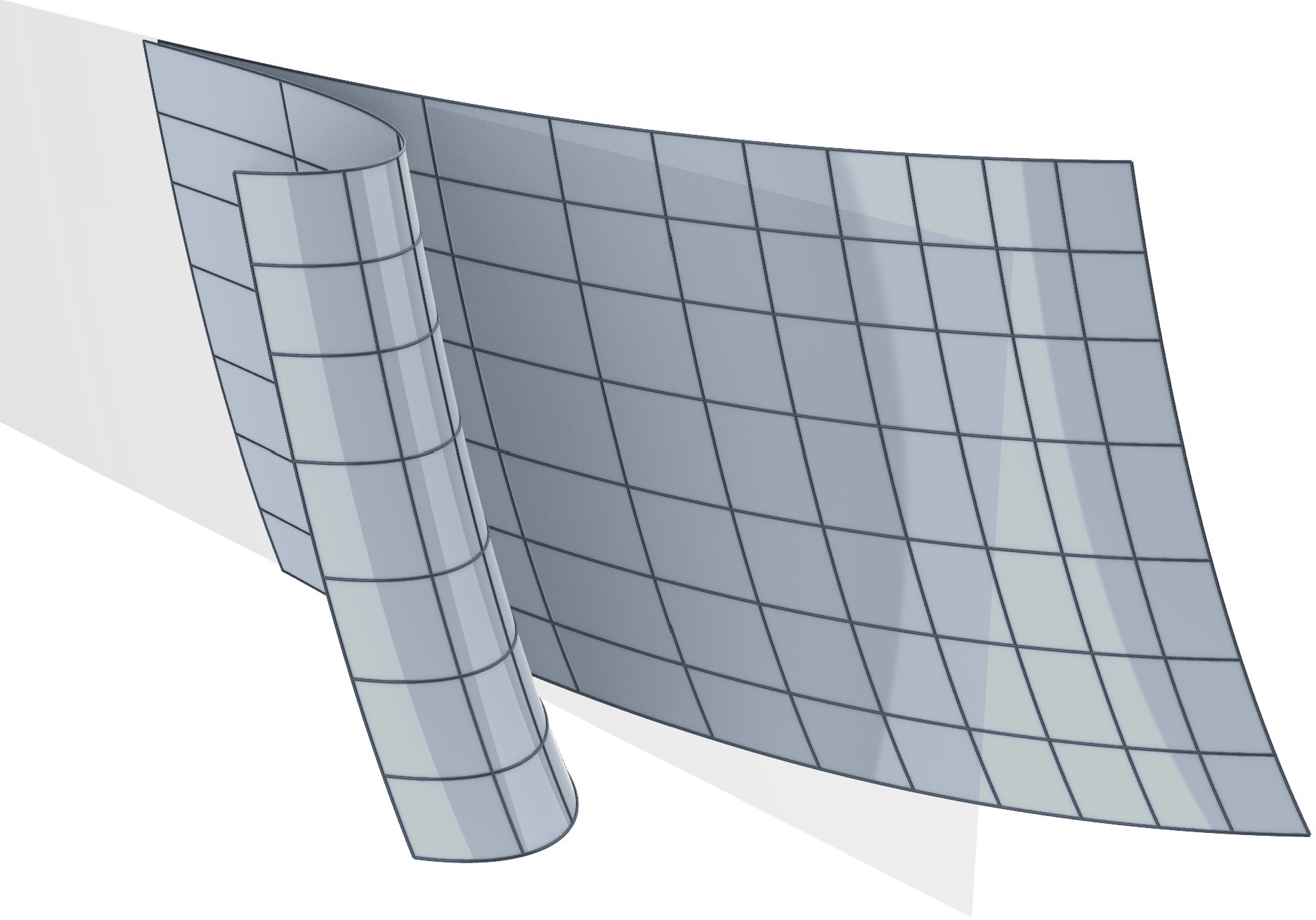}
\end{overpic}
    \caption{Isotropic CRPC surfaces (from left to right): a hyperbolic paraboloid (Example~\ref{ex:eq-paraboloid}), a rotational surface (Example~\ref{ex:rot-isotropic}), 
    a ruled surface (Example~\ref{ex:half-helicoid}), 
     a helical surface (Example~\ref{ex:part-helicatenoid}), a translational surface (Example~\ref{ex:translational}).  
     } 
    \label{fig:2}
\end{figure}

\subsection{Isotropic CRPC surfaces}

Let us list the known explicit families of isotropic CRPC surfaces. See Fig.~\ref{fig:2}. 

The simplest family is formed by paraboloids with $z$-parallel axes, which are always isotropic CRPC surfaces 
\cite[Example~1]{Yorov-Pottmann-Skopenkov-23}.

\begin{example}\label{ex:eq-paraboloid} The hyperbolic paraboloid $z=2xy$, where $x^2+y^2\le 1$, is contained in the analytic family of paraboloids
with the same boundary and the ratio $a$ of isotropic principal curvatures:
\begin{equation}\label{eq-paraboloid}
z=2xy+\frac{a+1}{a-1}\left(x^2+y^2-1\right), \qquad a\in [-1;0),
\end{equation}

In other words, the harmonic function $f^{(0)}(x,y)=2xy$ in the unit circle is contained in the real analytic family of functions having the same boundary values and satisfying~\eqref{eq-isotropic-crpc}:
\begin{equation}\label{eq-deg2}
f(x,y,t)=2xy+\frac{t}{\sqrt{t^2+4}}(x^2+y^2-1)
\end{equation}
\end{example}

Already, the case when $f^{(0)}(x,y)$ is a cubic harmonic polynomial seems not solvable in a closed form. However, this case demonstrates that flat points usually lead to singularities and should thus be excluded. 

\begin{example} \label{ex-no-xxxt} The harmonic function $f^{(0)}(x,y)=x^3-3xy^2$ has a flat point at the origin and is \emph{not} contained even in a $C^4$ family of functions $f(x,y,t)$ satisfying~\eqref{eq-isotropic-crpc}. Indeed, differentiating~\eqref{eq-isotropic-crpc} with respect to $t$ and setting $t=0$, we get
\begin{equation*}
\left.f_{xxt}+f_{yyt}\right|_{t=0}=6\sqrt{x^2+y^2}.
\end{equation*}
Here, the right side, and hence the left one, is not differentiable at the origin. See also Example~\ref{ex-no-xxtt} below. 

More generally, if a minimal surface $\Phi^0$ has a flat point, where $K=K_x=K_y=0$ but $K_{xx}K_{yy}-K_{xy}^2\ne 0$, then $\Phi^0$ is \emph{not} contained in a $C^4$ family of surfaces $\Phi^s$ with the ratio of the isotropic principal curvatures $s-1$.
Again, differentiating~\eqref{eq-isotropic-crpc}, we get
\begin{equation*}
\left.2H_t\right|_{t=0}=\sqrt{-K},
\end{equation*}
where the right side, hence the left one, is not differentiable
at the flat point. This argument also works in \emph{Euclidean} geometry. However, it applies to ``non-multiple'' flat points only, as we show next. 

\end{example}

\begin{remark}\label{rem-nonvanishing-det}
The (isotropic or Euclidean) Gaussian curvature is strictly negative at any non-flat point of a (isotropic or Euclidean) minimal surface.
\end{remark}

\begin{example} The constant $f^0(x,y)=0$ is contained in a real analytic family of nonlinear (for $t\ne 0$) functions $f(x,y,t)$ satisfying~\eqref{eq-isotropic-crpc}. Indeed, if some nonlinear function $f(x,y,t)$ satisfies~\eqref{eq-isotropic-crpc}, then 
$c(t)f(x,y,t)$ also does, for any real analytic function $c(t)$. In particular, if $c(0)=0$, then
the latter family contains the constant $0$ for $t=0$. 
\end{example}

The known closed-form families that we present now are formed by natural classes of surfaces such as rotational, ruled, helical, and translational ones. Those families contain all the isotropic CRPC surfaces in the corresponding classes, besides a few exceptions in the case of helical and translational surfaces \cite[Proposition~3, Theorems~17, 22, and~23]{Yorov-Pottmann-Skopenkov-23}.


\begin{example}\label{ex:rot-isotropic} (Cf.~Example~\ref{ex-catenoid}.) In the cylindrical coordinate system $(r,\phi,z)$, take the rotational isotropic minimal surface
$
z=\log r, 
$
where
$
r\ge 1
$
(\emph{isotropic catenoid}).
Then 
it is contained (as the limiting 
case $a\to -1$) in the analytic 
family of unbounded rotational surfaces with the same boundary and the ratio $a$ of isotropic principal curvatures:
$$
z=\frac{r^{a+1}-1}{a+1},
\qquad r\ge 1,\quad a\in (-1;0).
$$
For $0<a\le 1$, there is a rotational $C^2$ surface with the same boundary and the ratio $a$ of isotropic principal curvatures, depending on an additional parameter $c>0$:
$$
z=c\,r^{1+1/a}-c,
\qquad 0\le r\le 1,\quad a\in (0;1],\quad c>0.
$$
Thus, for each $a\in (0;1]$, Plateau's problem for the unit circle 
has uncountably many solutions. For $a\ne 1$, the surface has a flat point on the $z$-axis and is not analytic at the point unless $a=1,1/3,1/5,\dots$.
\end{example}


\begin{example}\label{ex:half-helicoid} The 
helicoid 
$y=x\tan z$, 
where $z \in (0, \pi /2)$ and $x > 0$, 
is contained (as the limiting case $a\to -1$) in the analytic family of 
ruled surfaces with 
the ratio  $a$ of isotropic principal curvatures:
$$y=x\tan \left(\frac{\sqrt{|a|}}{1+a}\log \left((a+1)z+1\right)\right),
\qquad x > 0, \quad z \in (0; \pi /2), \quad  a\in (-1;0).
$$
%
\end{example}

\begin{example}\label{ex:part-helicatenoid} The 
isotropic minimal surface
$r_c(u,v) = [u\cos v, u\sin v, c\,\log u+v]$, 
where $u>0$ and $v \in ( -\frac{\pi}{2}, \frac{\pi}{2})$ (\emph{isotropic helicatenoid}), 
is contained (as the limiting case $a\to -1$) in the analytic family of unbounded helical surfaces with the 
ratio $a$ of isotropic principal curvatures, where $b = \sqrt{1+c^2}-c$:
        \[
r_{a, b}(u,v) =  
\begin{bmatrix}
{\sqrt{u^{-2a} + b^{2} u^{2}}} \, \cos v /{\sqrt{b^{2} + 1}}\\
{\sqrt{u^{-2a} + b^{2} u^{2}}} \, \sin v /{\sqrt{b^{2} + 1}}\\
v + \arctan\left(\, bu^{a+1}\right) - \arctan b + \frac{b^2 \left(u^{a+1}-1\right) + a^{2}\!\left(u^{-a-1} - 1\right)}{\left(a^{2} - 1\right)b} 
\end{bmatrix}
, \quad u > 0, \, v \in \left( -\frac{\pi}{2};\frac{\pi}{2}\right),
         \,  a\in (-1;0).
\]
\end{example}


The surface $r_{a,b}(u,v)$ has indeed the ratio $a$ of isotropic principal curvatures because it is obtained from the surface in \cite[Eq.~(10)]{Yorov-Pottmann-Skopenkov-23} 
by 
the change of variable $u\mapsto \arctan bu^{a+1}$
and the isotropic similarity 
$$(x,y,z) \mapsto \left(\frac{b^{\frac{a}{a+1}}}{\sqrt{b^2+1}}\, x, \,\frac{b^{\frac{a}{a+1}}}{\sqrt{b^2+1}}\, y, \, z  - \arctan b -\frac{b^2 +a^2}{(a^2-1)b} \right). $$ 
The helicatenoid $r_c(u,v)$ is a limiting case because 
        $
        \lim_{a \to -1} r_{a,b}(u,v) = 
        r_c(u,v).
        $ 

Notice that the families in the last two examples are completely different, although they contain (different parts of) the same helicoid (for $c=0$): they consist of ruled and helical surfaces, respectively. 
The latter family is paradoxical: the isotropic minimal surfaces for different values of $c$ are 
different, but all arise as limiting cases of the same (up to isotropic similarity) family of isotropic CRPC surfaces.


\begin{example}\label{ex:translational}
The 
translational isotropic minimal surface
$y =\log \frac{z}{\sin x}$, where $x \in (0; \pi)$ and $z \in [1; e]$ (\emph{first isotropic Scherk surface}), 
is contained (as the limiting case $b\to 0$) in the analytic 
family of translational surfaces with the 
ratio of isotropic principal curvatures~$(b+1)/(b-1)$:
\begin{align}
r_b(u,v) = 
\label{eq-isotropic+nonisotropic}
\begin{bmatrix}
          v+ b\cos v\\
            b\sin v+(b^2-1)\log\left\lvert b-\sin v\right\rvert+(1-b^2)u \\
           \exp u
         \end{bmatrix},
        \qquad 
        u \in [0; 1],
        \quad
        v\in \left(0; \pi\right), 
        \quad 
         b\in \left(-1; 0\right),
\end{align}
\end{example}


    Here, the surface $r_b(u,v)$ has the ratio of isotropic principal curvatures ~$(b+1)/(b-1)$ by \cite[Theorem~23]{Yorov-Pottmann-Skopenkov-23}. 
    The first isotropic Scherk surface is a limiting case because it can be
    parametrized as $
        r(u,v) =
        [        v,
        u - \log\sin v,
        \exp u]
        $
        and
        $
        \lim_{b \to 0} r_b(u,v) 
         = r(u,v).
        $

There is also a \emph{second isotropic Scherk surface} $
        r(u,v) =
        [   u+v,
        \log|\cos u| - \log|\cos v|,
        u]
        $
\cite[Eq.~(22)]{Yorov-Pottmann-Skopenkov-23}.

\begin{problem} \label{pr-transaltional} Find a closed-form family of translational Euclidean (isotropic) CRPC surfaces containing Euclidean (isotropic) Scherk's second surface.
\end{problem}


\section{Isotropic CRPC surfaces}\label{sec-proofs-smooth}

In this section, we prove analogs of Theorem~\ref{thm-euclidean} and Corollary~\ref{thm-plateau-euc} in isotropic geometry. These analogs are obtained 
by replacing 
``minimal surface'' with ``isotropic minimal surface'', and ``principal curvatures'' with ``isotropic principal curvatures''. We prove them in the following equivalent form.

\begin{theorem}\label{thm-isotropic-equivalent}
Let $f^{(0)}(x,y)$ be a harmonic function in a Jordan domain with an analytic boundary that
extends to a real analytic function in the closed domain with a 
nonvanishing Hessian (even on the boundary). Then there is a unique (up to restriction) real analytic family of functions $f(x,y,t)$, where $t$ changes in a vicinity of $0$, having the same boundary values and satisfying $f(x,y,0)=f^{(0)}(x,y)$ and~\eqref{eq-isotropic-crpc}.
\end{theorem}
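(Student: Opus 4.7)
The plan is to construct the family as a formal power series in $t$ centered at zero, determine its coefficients by a sequence of linear Dirichlet problems, and then establish convergence by the classical method of analytic majorants.

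\emph{Setup of the recursion.} I would seek the family in the form \eqref{eq-expansion}. Differentiating \eqref{eq-isotropic-crpc} $m$ times in $t$ and evaluating at $t=0$ turns the nonlinear PDE into a chain of Poisson equations
\[
\Delta f^{(m)}=P_m\bigl(x,y;\,f^{(0)},\dots,f^{(m-1)}\bigr),\qquad m\ge 1,
\]
where $P_m$ is a universal polynomial in the second derivatives of $f^{(0)},\dots,f^{(m-1)}$ whose coefficients come from the Taylor expansion of the square root around the value $(f^{(0)}_{xy})^2-f^{(0)}_{xx}f^{(0)}_{yy}$. Since $f^{(0)}$ is harmonic, this value equals $(f^{(0)}_{xx})^2+(f^{(0)}_{xy})^2$, which is strictly positive on $\overline{\Omega}$ by the non-vanishing Hessian hypothesis, so the square root is real-analytic in a neighborhood of this argument and the expansion is legitimate. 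The requirement that $f(\cdot,\cdot,t)$ share boundary values with $f^{(0)}$ forces $f^{(m)}|_{\partial\Omega}=0$ for every $m\ge 1$.

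\emph{Regularity of the iterates.} I would solve each Dirichlet problem $\Delta f^{(m)}=P_m$, $f^{(m)}|_{\partial\Omega}=0$, by induction on $m$. If $f^{(0)},\dots,f^{(m-1)}$ extend to real-analytic functions in a common complex neighborhood $U$ of $\overline{\Omega}$, then so does $P_m$. Elliptic Schauder estimates then yield $f^{(m)}\in C^{2+\alpha}(\overline{\Omega})$, and analyticity of $\partial\Omega$ combined with the boundary-regularity theorems of Morrey--Nirenberg type cited in the introduction promotes $f^{(m)}$ to a real-analytic function in a possibly smaller complex neighborhood of $\overline{\Omega}$.

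\emph{Convergence by analytic majorization.} The main obstacle is to prove that the formal series \eqref{eq-expansion} has a positive radius of convergence in $t$. I would introduce a generating function $L(t)=\sum_{m\ge 0} l_m t^m/m!$ with positive coefficients $l_m$ dominating the Hölder norms $\|f^{(m)}\|_{C^{2+\alpha}(\overline{U'})}$ on a slightly shrunken complex neighborhood $U'\subset U$. The Schauder bound $\|f^{(m)}\|_{C^{2+\alpha}}\le C\|P_m\|_{C^\alpha}$, together with the multilinear structure of $P_m$ and the submultiplicativity of Hölder norms, produces a recursion of the form indicated by \eqref{eq-l-reccurence}, which I would set up to be equivalent to an implicit algebraic equation $\Phi(L,t)=0$ with $\Phi(0,0)=0$ and $\partial_L\Phi(0,0)\ne 0$. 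The analytic implicit function theorem then furnishes an analytic solution $L(t)$ with positive radius of convergence, from which the convergence of \eqref{eq-expansion} to a real-analytic function on $\overline{\Omega}\times(-\varepsilon;\varepsilon)$ follows. Uniqueness up to restriction is then automatic: any real-analytic family satisfying the stated conditions has its Taylor coefficients in $t$ at $t=0$ determined by the same recursion, so any two such families coincide in a neighborhood of $t=0$.
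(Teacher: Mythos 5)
Your overall architecture matches the paper's: expand in a power series in $t$, read off a chain of Poisson equations with zero Dirichlet data for the coefficients $f^{(m)}$, bound their H\"older norms recursively via Schauder estimates and submultiplicativity, and majorize by an explicit generating function satisfying an algebraic equation (the paper does exactly this in Lemmas~\ref{l-poisson}, \ref{l-one-step}, \ref{l-reccurence} and Corollary~\ref{fmlessiso}). The uniqueness argument you sketch is also the paper's (Corollary~\ref{cor-uniqueness}). So the skeleton is right.

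There is, however, a genuine gap in how you pass from convergence of the series to real analyticity of the sum. You propose to control $\|f^{(m)}\|_{C^{2+\alpha}}$ on ``a slightly shrunken complex neighborhood $U'\subset U$'' and assert that all $f^{(m)}$ extend to a \emph{common} complex neighborhood $U$. Neither is justified by what you invoke. Schauder's estimates bound H\"older norms on the real domain $\overline\Omega$, not norms of a holomorphic extension; and while each $f^{(m)}$, being a solution of an analytic elliptic Dirichlet problem with analytic data, does extend to some complex neighborhood, the size of that neighborhood is not uniform in $m$ and is not controlled by the Schauder constant you are using in the recursion. Without quantitative Cauchy-type estimates on a scale of shrinking complex domains (an Ovsyannikov--Nirenberg-style argument you do not set up), the ``common $U$'' claim is unfounded. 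And if instead you interpret your recursion as bounding the real H\"older norms $\|f^{(m)}\|_{C^{2+\alpha}(\overline\Omega)}$---which is what the Schauder machinery actually gives---then convergence in $C^{2+\alpha}$ does \emph{not} imply the limit is real analytic; analytic functions are not closed under $C^{2+\alpha}$ limits, so your final sentence ``from which the convergence of \eqref{eq-expansion} to a real-analytic function \ldots follows'' is a non-sequitur.

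The paper closes precisely this gap in a different way: it deduces only $C^{2+1/2}$ convergence (Corollary~\ref{fmlessiso}), yielding a $C^{2+1/2}$ solution of the nonlinear equation~\eqref{eq-isotropic-crpc} for each small $t$, and then invokes Friedman's analyticity theorem for elliptic PDEs (Theorem~\ref{th-petrowski-short}, applied in Corollary~\ref{cor-crpr-analyticity-isotropic}) to promote that $C^{2+1/2}$ solution to a real analytic one in $(x,y)$; joint analyticity in $(x,y,t)$ then comes from Osgood's lemma. You should either supply the missing holomorphic-extension estimates, or, more simply, adopt this two-step route: prove $C^{2+1/2}$ convergence, then appeal to elliptic analyticity of the resulting solution rather than to analyticity of the individual terms $f^{(m)}$.
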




%
%

\begin{corollary}
\label{thm-plateau-iso} 
     If an analytic Jordan curve satisfies the assumptions of Corollary~\ref{thm-plateau-euc},
     then for all sufficiently small~$t$, it can be spanned by the graph of a function 
     that satisfies~\eqref{eq-isotropic-crpc}.
\end{corollary}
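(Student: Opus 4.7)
The plan is to reduce Corollary~\ref{thm-plateau-iso} to Theorem~\ref{thm-isotropic-equivalent}: given the analytic Jordan curve $\Gamma$ of the hypothesis, construct a harmonic function $f^{(0)}$ whose graph spans $\Gamma$, verify that its Hessian does not vanish anywhere on the closed domain, and then take the analytic deformation given by Theorem~\ref{thm-isotropic-equivalent} as the desired family of CRPC graphs.

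First I would exploit the two-intersection hypothesis for $z$-parallel planes to show that the projection $\pi(\Gamma)\subset\mathbb{R}^2$ is an analytic convex Jordan curve bounding a convex analytic domain $\Omega$: any line in the $xy$-plane meets $\pi(\Gamma)$ in at most two points since the $z$-parallel plane above it meets $\Gamma$ in at most two, and $\pi|_{\Gamma}$ is injective, as otherwise some vertical plane through a doubled vertical would meet $\Gamma$ in $\ge 4$ points. Consequently $\Gamma$ is the graph over $\partial\Omega$ of an analytic function $g\colon\partial\Omega\to\mathbb{R}$. Solving the Dirichlet problem $\Delta u=0$ in $\Omega$ with boundary data $g$ gives a unique harmonic $f^{(0)}$, which by the classical result of Morrey--Nirenberg on analytic regularity up to an analytic boundary extends to a real analytic function in a neighborhood of $\overline{\Omega}$. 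Its graph is an isotropic minimal surface spanning $\Gamma$.

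The main step is to rule out flat points of $f^{(0)}$ on $\overline{\Omega}$ using the four-intersection hypothesis. Suppose for contradiction that $(x_0,y_0)\in\overline{\Omega}$ is a flat point, let $\Pi=\{z=\ell(x,y)\}$ be the tangent plane to the graph of $f^{(0)}$ there with $\ell$ affine, and set $h:=f^{(0)}-\ell$. Then $h$ is harmonic and vanishes at $(x_0,y_0)$ to some order $k\ge 3$: order $\ge 2$ by the choice of the tangent plane, and one further order because the Hessian of $f^{(0)}$ vanishes. As the real part of a holomorphic function, $h$ has near $(x_0,y_0)$ a zero set consisting of $k$ smooth analytic arcs crossing at that point; since $h\not\equiv 0$, no component of $\{h=0\}\cap\Omega$ is compactly contained in $\Omega$ by the maximum principle, and no branch of $\{h=0\}$ can coincide locally with $\partial\Omega$ (which would force $\Gamma\subset\Pi$, immediately contradicting the four-intersection hypothesis). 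The multiplicity of a zero of $h|_{\partial\Omega}$ at $q\in\partial\Omega$ equals the multiplicity of $\Gamma\cap\Pi$ at the corresponding point of $\Gamma$. An analysis of the zero set then shows: if $(x_0,y_0)$ is an interior point, the $2k$ half-arcs emanating from it terminate on $\partial\Omega$ with total multiplicity $\ge 2k$; if $(x_0,y_0)\in\partial\Omega$, then $h|_{\partial\Omega}$ has multiplicity $\ge k$ at $(x_0,y_0)$ itself, and the $k$ half-arcs entering $\Omega$ contribute further multiplicity $\ge k$ at their other endpoints on $\partial\Omega$. In either case the total intersection count of $\Gamma$ with $\Pi$ is $\ge 2k\ge 6$, contradicting the four-intersection hypothesis.

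Having established the absence of flat points, Theorem~\ref{thm-isotropic-equivalent} provides an analytic family $f(x,y,t)$ of solutions of \eqref{eq-isotropic-crpc} with $f(x,y,0)=f^{(0)}(x,y)$ and the same boundary values; for all sufficiently small $t$, the graph of $f(\cdot,\cdot,t)$ therefore spans $\Gamma$. I expect the main obstacle to be the multiplicity count in the boundary case, where the $k$ analytic branches of $\{h=0\}$ need not be transversal to $\partial\Omega$ and one has to carefully relate the 2D order of vanishing of $h$ at $(x_0,y_0)$ both to the 1D order of vanishing of $h|_{\partial\Omega}$ at that point and to the total multiplicity at the distant endpoints of the arcs entering $\Omega$.
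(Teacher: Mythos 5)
Your overall strategy matches the paper's: project $\Gamma$ via the two-intersection hypothesis to get a convex analytic domain $\Omega$, solve the Dirichlet problem for a harmonic $f^{(0)}$, argue analyticity up to the boundary, rule out flat points via the four-intersection hypothesis, and finish by Theorem~\ref{thm-isotropic-equivalent}. The divergence (and the gap) is in how flat points are excluded. You attempt a direct nodal-set multiplicity count, treating interior and boundary flat points separately, and you correctly identify the boundary case as the sticking point: when the flat point lies on $\partial\Omega$ you would have to relate the two-dimensional vanishing order of $h=f^{(0)}-\ell$ to the one-dimensional vanishing order of $h|_{\partial\Omega}$ and to the multiplicities at the far ends of the nodal arcs, and the arcs need not be transversal to $\partial\Omega$. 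That bookkeeping is genuinely delicate and your write-up does not close it.

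The paper sidesteps the boundary case entirely with a domain-enlargement trick. Since the four-intersection condition (counting multiplicities) is an open condition under $C^\infty$-small perturbations of an analytic Jordan curve, one replaces $\Omega$ by a slightly scaled-up $\tilde\Omega$ with boundary curve $\tilde\Gamma$ (the graph of $\tilde u|_{\partial\tilde\Omega}$, where $\tilde u$ is the analytic extension of $u$) that still satisfies the four-intersection hypothesis; every point of $\overline\Omega$, and in particular any alleged flat point, is then an \emph{interior} point of $\tilde\Omega$. The argument then reduces to the interior case only, which is the version handled in Nitsche~\cite[\S373]{nitsche-1975}: an interior zero of order $k\ge 3$ of a harmonic function on a Jordan domain forces at least $2k\ge 6$ boundary zeros (with multiplicity), hence at least six intersections of the tangent plane with $\tilde\Gamma$, contradiction. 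So the idea you are missing is precisely this openness-plus-enlargement step; once you add it, the boundary case disappears and your interior argument is exactly the Nitsche count the paper cites. Without it, the proof as written has a genuine gap.
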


In what follows, we fix a Jordan domain $\Omega$ with an analytic boundary and a parameter $\varepsilon>0$. Denote $\delta_{mn} = 1$ if $m=n$ and $0$ otherwise. Any empty sum is set to be $0$ by definition. 

We start with two lemmas giving a recursion for the coefficients of series~\eqref{eq-expansion}.


\begin{lemma}\label{der-mean}
Two real analytic functions $H, K\colon(-\varepsilon; \varepsilon)\to\mathbb{R}$ with $K(0)< 0$ satisfy $4H^2 = -t^2K$ for all $|t|<\varepsilon$
%
%
if and only if $H(0) = 0$ and
\begin{equation}\label{H_m}
    H^{(m)} = \pm\frac{-mK^{(m-1)}-\frac{4}{m+1}\sum_{r=2}^{m-1}\binom{m+1}{r}
H^{(r)}H^{(m-r+1)}}{2^{2-\delta_{m1}}\sqrt{-K^{(0)}}}
\qquad\text{for each }m=1,2,\dots,
\end{equation}
where $H^{(i)}$ and $K^{(i)}$ are the $i$-th derivatives at $t=0$,
and the same choice of sign in $\pm$ applies for all $m$.
\end{lemma}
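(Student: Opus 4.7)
The plan is to differentiate the identity $4H^2=-t^2 K$ exactly $m+1$ times via Leibniz's rule and evaluate at $t=0$, then solve the resulting equation for $H^{(m)}$. Evaluating the identity at $t=0$ first gives $4H(0)^2=0$, so $H(0)=0$; this collapses many terms in the Leibniz expansion. On the right side, since $(t^2)^{(r)}(0)=2\delta_{r,2}$, only one term survives, yielding $(t^2 K)^{(m+1)}(0)=\binom{m+1}{2}\cdot 2\cdot K^{(m-1)}=m(m+1)K^{(m-1)}$ for every $m\geq 1$.

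On the left, because $H^{(0)}=H(0)=0$, the extreme terms in $(H^2)^{(m+1)}(0)=\sum_{r=0}^{m+1}\binom{m+1}{r}H^{(r)}H^{(m+1-r)}$ drop. For $m\geq 2$ the $r=1$ and $r=m$ contributions combine into $2(m+1)H^{(1)}H^{(m)}$, with the inner sum $\sum_{r=2}^{m-1}\binom{m+1}{r}H^{(r)}H^{(m+1-r)}$ left over. For $m=1$ the indices $r=1$ and $r=m$ coincide and instead contribute a single $2(H^{(1)})^2$; this coincidence is precisely what forces the $\delta_{m1}$ correction in the exponent $2^{2-\delta_{m1}}$ of the lemma.

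Combining the two sides for $m=1$ gives $8(H^{(1)})^2=-2K^{(0)}$, hence $H^{(1)}=\pm\tfrac{1}{2}\sqrt{-K^{(0)}}$, which matches the claimed formula once one notes $2^{2-\delta_{11}}\sqrt{-K^{(0)}}=2\sqrt{-K^{(0)}}$. For each $m\geq 2$ the resulting equation is linear in $H^{(m)}$, and substituting $H^{(1)}=\pm\tfrac{1}{2}\sqrt{-K^{(0)}}$ into the solution reproduces the lemma's formula verbatim. Uniformity of the $\pm$ sign across all $m$ follows because flipping every $H^{(i)}$ simultaneously leaves each product $H^{(r)}H^{(m+1-r)}$ invariant while flipping $H^{(m)}$, so each of the two sign choices for $H^{(1)}$ propagates consistently through the recursion. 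The converse is then immediate: assuming $H(0)=0$ together with the recursion, reversing the algebra shows that every Taylor coefficient of $4H^2+t^2 K$ at $t=0$ vanishes, and real analyticity forces $4H^2+t^2K\equiv 0$ on $(-\varepsilon,\varepsilon)$.

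The content is essentially a Leibniz computation; the only delicate step is the case split between $m=1$ and $m\geq 2$, encoded by the Kronecker symbol. I do not expect any substantial obstacle beyond this bookkeeping.
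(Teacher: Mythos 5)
Your proof is correct and follows essentially the same route as the paper's: differentiate $4H^2=-t^2K$ a total of $m+1$ times via Leibniz, evaluate at $t=0$, use $H(0)=0$ (forced by the $t=0$ evaluation) to collapse the extreme terms, split the cases $m=1$ and $m\ge 2$ to account for the coincidence of $r=1$ and $r=m$, and invoke analyticity for the converse. Your symmetry argument for the uniformity of the $\pm$ sign is a slightly more explicit version of what the paper leaves implicit (that the recursion determines each $H^{(m)}$, $m\ge 2$, once $H^{(1)}$ is fixed, and the sign in the formula is precisely the sign of $H^{(1)}$), but the content is the same.
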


\begin{proof}
Assume that $4H^2=-t^2K$. Then $H(0)$ vanishes. Differentiating both sides $m + 1$ times, using the Leibniz rule,  and setting $t=0$, we get
\begin{equation}\label{eq-der-mean-proof}
   4\sum_{r=0}^{m+1}\binom{m+1}{r}H^{(r)}H^{(m-r+1)} = -\sum_{r=0}^{m+1}\binom{m+1}{r}
   2\delta_{2r}
   K^{(m-r+1)}.
\end{equation}
This is equivalent to 
$2H^{(1)} =\pm\sqrt{-K^{(0)}}$, 
when $m=1$, and to
\begin{equation*}
   8H^{(0)}H^{(m+1)}+ 8(m+1)H^{(1)}H^{(m)}+4\sum_{r=2}^{m-1}\binom{m+1}{r}H^{(r)}H^{(m-r + 1)} = 
   - m(m+1)K^{(m-1)},
\end{equation*}
when $m>1$. Since $H^{(0)}=0$ and $K^{(0)}< 0$,  we get~\eqref{H_m} in both cases.
Conversely, \eqref{eq-der-mean-proof} and $H^{(0)} = 0$ 
imply 
the coincidence of Taylor coefficients of 
$4H^2$ and $-t^2K$. By the analyticity, \eqref{H_m} and $H(0) = 0$ 
imply 
$4H^2=-t^2K$.
\end{proof}

Recall that \emph{$C^k$-convergence} means uniform convergence of a sequence of functions together with all the 
partial derivatives up to order $k$. 

\begin{lemma} \label{l-poisson} 
Assume that for a sequence of $C^2$ functions $f^{(m)}\colon\Omega \to\mathbb{R}$, we have $f^{(0)}_{xx}f^{(0)}_{yy}-\left(f^{(0)}_{xy}\right)^2<0$, and series~\eqref{eq-expansion} $C^2$-converges for $|t|<\varepsilon$. Then the sum
%
satisfies~\eqref{eq-isotropic-crpc} if and only if for each $m\ge 0$ 
we have
\begin{align}\label{eq-poisson}
\textstyle f^{(m)}_{xx}+f^{(m)}_{yy} &= \notag\\
 & \textstyle  \frac{
m\sum_{r=0}^{m-1}\binom{m-1}{r}\left(f^{(r)}_{xy}f^{(m-r-1)}_{xy}
-f^{(r)}_{xx}f^{(m-r-1)}_{yy}\right)
-\frac{1}{m+1}\sum_{r=2}^{m-1}\binom{m+1}{r}
\left(f^{(r)}_{xx}+f^{(r)}_{yy}\right)\left(f^{(m-r+1)}_{xx}+f^{(m-r+1)}_{yy}\right)
}{2^{1-\delta_{m1}}\sqrt{\left(f^{(0)}_{xy}\right)^2-f^{(0)}_{xx}f^{(0)}_{yy}}}. 
\end{align}
\end{lemma}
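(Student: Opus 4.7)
The plan is to reduce this lemma to Lemma~\ref{der-mean} applied pointwise. Fix $(x,y)\in\Omega$ and set $H(t):=(f_{xx}(x,y,t)+f_{yy}(x,y,t))/2$ and $K(t):=f_{xx}f_{yy}-(f_{xy})^2$ at $(x,y,t)$. The $C^2$-convergence of~\eqref{eq-expansion} makes $f_{xx}$, $f_{yy}$, $f_{xy}$ into power series in $t$ at the chosen point, so $H(t)$ and $K(t)$ are real analytic functions of $t\in(-\varepsilon,\varepsilon)$. The PDE~\eqref{eq-isotropic-crpc} at $(x,y)$ becomes $2H(t)=t\sqrt{-K(t)}$, equivalently $4H^2=-t^2K$ together with the sign condition carried by the positive square root in~\eqref{eq-isotropic-crpc}. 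The hypothesis $f^{(0)}_{xx}f^{(0)}_{yy}-(f^{(0)}_{xy})^2<0$ is precisely $K(0)<0$, so Lemma~\ref{der-mean} is applicable.

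Next I would identify the Taylor coefficients. Term-by-term differentiation of~\eqref{eq-expansion} in $t$ gives $2H^{(m)}(0)=f^{(m)}_{xx}(x,y)+f^{(m)}_{yy}(x,y)$, and Leibniz's rule applied to $f_{xx}f_{yy}-(f_{xy})^2$ yields
\begin{equation*}
K^{(m-1)}(0)=\sum_{r=0}^{m-1}\binom{m-1}{r}\bigl(f^{(r)}_{xx}f^{(m-1-r)}_{yy}-f^{(r)}_{xy}f^{(m-1-r)}_{xy}\bigr).
\end{equation*}
Substituting these expressions into~\eqref{H_m} and rewriting each $H^{(r)}H^{(m-r+1)}$ as $\tfrac14(f^{(r)}_{xx}+f^{(r)}_{yy})(f^{(m-r+1)}_{xx}+f^{(m-r+1)}_{yy})$ converts the factor $4/(m+1)$ into $1/(m+1)$ and reproduces the right-hand side of~\eqref{eq-poisson} at $(x,y)$, up to a uniform choice of $\pm$. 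Since $(x,y)\in\Omega$ is arbitrary, \eqref{eq-poisson} then holds throughout $\Omega$.

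Finally, I would pin down the sign and give the converse. The $m=1$ case of~\eqref{H_m} reads $2H^{(1)}(0)=\pm\sqrt{-K(0)}$, and the $+\,t\sqrt{\,\cdot\,}$ convention in~\eqref{eq-isotropic-crpc} forces $H(t)/t\ge 0$ for small $t$, hence the $+$ sign is selected; by the last clause of Lemma~\ref{der-mean}, this choice propagates to every $m$. Conversely, assuming~\eqref{eq-poisson} for every $m\ge 0$, its $m=0$ case (both sums empty) gives the harmonicity $f^{(0)}_{xx}+f^{(0)}_{yy}=0$, i.e.\ $H(0)=0$; with the $+$ sign the remaining cases reproduce~\eqref{H_m}, and Lemma~\ref{der-mean} then yields $4H^2=-t^2K$ on $\Omega\times(-\varepsilon,\varepsilon)$, from which the positive square root recovers~\eqref{eq-isotropic-crpc}. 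The main obstacle I expect is purely combinatorial bookkeeping: tracking the binomial indices $\binom{m-1}{r}$ versus $\binom{m+1}{r}$, the powers of $2$, the $\delta_{m1}$ factor, and checking that the $m=0$ and $m=1$ edge cases of~\eqref{eq-poisson} reduce correctly (the $m=1$ sum $\sum_{r=2}^{0}$ being empty is what accounts for $\delta_{m1}$ in the denominator).
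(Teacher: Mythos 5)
Your argument is correct and follows essentially the same route as the paper's: fix $(x,y)$, observe that $C^2$-convergence of~\eqref{eq-expansion} makes $H(t)$ and $K(t)$ real analytic in $t$, rewrite~\eqref{eq-isotropic-crpc} as $2H=t\sqrt{-K}$, apply Lemma~\ref{der-mean} together with the Leibniz rule, and pin the sign to $+$ via the $m=1$ case. You merely spell out a few steps the paper leaves implicit (the explicit Leibniz expansion of $K^{(m-1)}$, the rewriting of $H^{(r)}H^{(m-r+1)}$, the converse direction, and the edge cases $m=0,1$), which is a faithful elaboration rather than a different approach.
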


\begin{proof} 
Since series~\eqref{eq-expansion} $C^2$-converges, 
it follows that $f_{xx}=\sum_{m=0}^\infty f_{xx}^{(m)}t^m/m!$, $f_{xy}=\sum_{m=0}^\infty f_{xy}^{(m)}t^m/m!$, $f_{yy}=\sum_{m=0}^\infty f_{yy}^{(m)}t^m/m!$, and hence $H = \left(f_{xx} + f_{yy}\right)/2$ and $K= f_{xx}f_{yy}-f_{xy}^2$ are real analytic in $t$ 
for each $(x,y)\in\Omega$.
Moreover, ~\eqref{eq-isotropic-crpc} is equivalent to $2H=t\sqrt{-K}$.
By Lemma~\ref{der-mean} and the Leibniz rule, \eqref{eq-isotropic-crpc} is equivalent to~\eqref{eq-poisson}. 
Note that the sign in $\pm$ in~\eqref{H_m} is uniquely determined to be `$+$' because $2H^{(1)}=\sqrt{-K^{(0)}}$ by~\eqref{eq-isotropic-crpc}.
\end{proof}

We obtain the following direct corollary. 

\begin{corollary}[Uniqueness] \label{cor-uniqueness} 
Under the assumptions of Theorem~\ref{thm-isotropic-equivalent}, there is no more than one real analytic family $f\colon\overline\Omega\times [-\varepsilon;\varepsilon]\to\mathbb{R}$
satisfying~\eqref{eq-isotropic-crpc} such that $f(x,y,t)=f^{(0)}(x,y)$ for $t=0$ or $(x,y)\in\partial\Omega$.
\end{corollary}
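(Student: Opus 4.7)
The plan is to reduce uniqueness of the family $f(x,y,t)$ to uniqueness of the coefficients $f^{(m)}(x,y)$ in its Taylor expansion~\eqref{eq-expansion} with respect to $t$, and then determine each $f^{(m)}$ inductively as the unique solution of a Dirichlet problem for the Laplacian.

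First, I would expand any real analytic family $f\colon\overline\Omega\times[-\varepsilon;\varepsilon]\to\mathbb{R}$ satisfying the hypotheses as in~\eqref{eq-expansion}. Real analyticity on a neighborhood of $\overline\Omega\times[-\varepsilon;\varepsilon]$ gives $C^2$-convergence of this series after possibly shrinking~$\varepsilon$. The hypothesis $f(x,y,0)=f^{(0)}(x,y)$ fixes the zeroth coefficient. Moreover, since $f^{(0)}$ is harmonic with nonvanishing Hessian, the relation $f^{(0)}_{xx}+f^{(0)}_{yy}=0$ gives $f^{(0)}_{xx}f^{(0)}_{yy}-(f^{(0)}_{xy})^2=-(f^{(0)}_{xx})^2-(f^{(0)}_{xy})^2<0$ throughout $\overline\Omega$, so Lemma~\ref{l-poisson} applies and yields the Poisson equations~\eqref{eq-poisson} for every $m\ge 0$.

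Next, the boundary condition $f(x,y,t)=f^{(0)}(x,y)$ for $(x,y)\in\partial\Omega$ forces $f^{(m)}|_{\partial\Omega}=0$ for every $m\ge 1$. I would now induct on $m$. Inspecting~\eqref{eq-poisson}, the right-hand side involves $f^{(r)}$ only for $r\le m-1$ (in the first sum through indices $r$ and $m-r-1$, in the second through $r\ge 2$ and $m-r+1\le m-1$), together with the strictly positive denominator built solely from $f^{(0)}$. Hence, assuming $f^{(0)},\dots,f^{(m-1)}$ are already uniquely determined, the right-hand side $g_m(x,y)$ is fixed, and $f^{(m)}$ satisfies
\begin{equation*}
\Delta f^{(m)}=g_m\quad\text{in }\Omega,\qquad f^{(m)}|_{\partial\Omega}=0.
\end{equation*}
By the classical uniqueness for the Dirichlet problem for Laplace's equation, $f^{(m)}$ is uniquely determined. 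The induction gives uniqueness of every Taylor coefficient.

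Finally, real analyticity of $f$ in $t$ implies that two families with identical Taylor expansions at $t=0$ coincide on $(-\varepsilon;\varepsilon)$ for each fixed $(x,y)\in\overline\Omega$, hence on $\overline\Omega\times[-\varepsilon;\varepsilon]$. There is no genuine obstacle in this argument: the only points requiring care are verifying the sign condition needed to invoke Lemma~\ref{l-poisson} (handled above via harmonicity) and observing that the recursion~\eqref{eq-poisson} is strictly triangular in $m$, so no extra regularity or fixed-point argument is needed beyond uniqueness for the standard Dirichlet problem.
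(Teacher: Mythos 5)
Your proof follows essentially the same route as the paper's: expand in a $t$-Taylor series, note that analyticity plus harmonicity of $f^{(0)}$ lets you invoke Lemma~\ref{l-poisson} to get the recursive Poisson equations~\eqref{eq-poisson}, observe that the boundary condition kills $f^{(m)}|_{\partial\Omega}$ for $m\ge 1$, and induct using uniqueness of the Dirichlet problem. The only cosmetic difference is that you derive $K^{(0)}<0$ directly from harmonicity rather than citing Remark~\ref{rem-nonvanishing-det}; the argument is correct.
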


\begin{proof} 
 Let $f^{(m)}$ be the $m$-th derivative of $f(x,y,t)$ with respect to $t$ evaluated at $t=0$. It suffices to prove that $f^{(m)}$ is uniquely determined by $f^{(0)}$ for each $m\ge 1$. 
 We prove it by induction on $m$. 
 Assume that 
 $f^{(0)},\dots,f^{(m-1)}$ have already been determined. Since $f(x,y,t)=f^{(0)}(x,y)$ on the boundary $\partial\Omega$, it follows that $f^{(m)}(x,y)=0$ on $\partial\Omega$. By the assumptions of Theorem~\ref{thm-isotropic-equivalent}, $f^{(0)}(x,y)$ is harmonic with a nonvanishing Hessian; hence, $f^{(0)}_{xx}f^{(0)}_{yy}-\left(f^{(0)}_{xy}\right)^2<0$ by Remark~\ref{rem-nonvanishing-det}.
 By compactness, series~\eqref{eq-expansion} $C^2$-converges for sufficiently small $|t|$, because the Taylor series of the derivatives of $f(x,y,t)$ with respect to $x$ and $y$ converge uniformly in a neighborhood of each point of $\overline\Omega\times\{0\}$.
 Thus, by Lemma~\ref{l-poisson}, we get~\eqref{eq-poisson}. This is a Poisson equation with respect to $f^{(m)}$ because the right side depends only on $f^{(0)},\dots,f^{(m-1)}$. Since the Poisson equation with the Dirichlet boundary condition has no more than one solution, $f^{(m)}$ is uniquely determined. By induction, the corollary follows.
\end{proof}

For the existence of the family $f(x,y,t)$, we need the $C^k$-convergence of series~\eqref{eq-expansion}, which is equivalent to the convergence in the \emph{$C^k$ norm}
\begin{equation}\label{eq-norm}
\left\|f\right\|_{C^{k}}:= \sum_{n=0}^{k}\;\sup_{\substack{(x,y) \in \Omega\\ 0\le i\le n}}\;\left|\frac{\partial^{n}f}{\partial x^{i}\partial y^{n-i}}\right|.
\end{equation}

We are going to use the following well-known version of the Weierstrass M-test.

\begin{lemma}\label{convergence} Let $t\in\mathbb{R}$. If a sequence of $C^k$ functions $f^{(m)}\colon\Omega\to\mathbb{R}$ and a sequence $a^{(m)}$ of real numbers
satisfy $\left\|f^{(m)}\right\|_{C^{k}} \le a^{(m)}$
for $m\ge 0$ and $\sum_{m=0}^{\infty}a^{(m)}|t|^m/m!$ converges, then series~\eqref{eq-expansion} $C^k$-converges. 
\end{lemma}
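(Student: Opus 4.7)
The plan is a routine application of the Weierstrass M-test to each partial derivative separately, so the proof should be short. First, I would unwind the definition~\eqref{eq-norm}: for every integer $n$ with $0\le n\le k$ and every $i$ with $0\le i\le n$, the bound $\|f^{(m)}\|_{C^{k}}\le a^{(m)}$ implies
\begin{equation*}
\sup_{(x,y)\in\Omega}\left|\frac{\partial^{n}f^{(m)}}{\partial x^{i}\partial y^{n-i}}\right|\le a^{(m)}
\end{equation*}
for every $m\ge 0$, since each such supremum is one of the terms in the sum defining $\|f^{(m)}\|_{C^{k}}$.

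Next, I would fix $t\in\mathbb{R}$ with the stated convergence hypothesis, and observe that the $m$-th term of the formal series obtained by differentiating~\eqref{eq-expansion} term by term under $\partial^{n}/\partial x^{i}\partial y^{n-i}$ is bounded in sup norm on $\Omega$ by $a^{(m)}|t|^{m}/m!$. Since $\sum_{m=0}^{\infty}a^{(m)}|t|^{m}/m!$ converges by assumption, the Weierstrass M-test yields uniform convergence on $\Omega$ of each of the finitely many differentiated series (one for every pair $(n,i)$ with $0\le i\le n\le k$). In particular, the undifferentiated series~\eqref{eq-expansion} itself converges uniformly to some function $f(x,y,t)$ on $\Omega$.

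Finally, I would invoke the standard theorem that if a series of $C^{1}$ functions converges at some point and the series of its partial derivatives converges uniformly, then the sum is $C^{1}$ and differentiation commutes with the sum. Applying this inductively on the order of differentiation, from $n=0$ up to $n=k$, shows that the limit $f(\cdot,\cdot,t)$ is of class $C^{k}$ on $\Omega$ and that its partial derivatives up to order $k$ are the uniform limits of the corresponding derivative series. This is exactly $C^{k}$-convergence of~\eqref{eq-expansion}, as required. I do not anticipate any genuine obstacle; the only thing to double-check is that the M-test is applied separately to each of the finitely many derivative series (so that the same summable majorant $a^{(m)}|t|^{m}/m!$ works in every case), which is immediate from the first displayed estimate.
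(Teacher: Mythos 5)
Your proof is correct and is exactly the standard argument the paper implicitly relies on: the paper states Lemma~\ref{convergence} as a ``well-known version of the Weierstrass M-test'' without giving a proof, and your chain of reasoning (extract a uniform majorant for each of the finitely many derivative series from the $C^k$-norm bound, apply the M-test to each, then invoke the term-by-term differentiation theorem inductively) is precisely how one fills in that standard fact. No gaps, and nothing to compare beyond noting that the paper simply omits this routine verification.
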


So, we need a bound for the growth of 
$\left\|f^{(m)}\right\|_{C^{k}}$ as $m\to\infty$. 
The following simple example shows that we cannot get it just from~\eqref{eq-poisson} viewed as the Poisson equation; see \cite{Urbano-2017,Di-Fazio} for details and generalizations. 

\begin{example}[Folklore] The function $f(x,y)=xy\log(x^2+y^2)$ (extended by $0$ at the origin) has bounded Laplacian $f_{xx}+f_{yy}$ but unbounded derivative $f_{xy}$. This shows that just a bound for $\|f_{xx}+f_{yy}\|_{C^0}$ does not provide a bound for $\|f\|_{C^2}$.
\end{example}

The way out is to consider the H\"older norms instead, for which Schauder's estimates do the job.



\begin{definition}[H\"older norm] \label{def-holder}
A function $f\colon\Omega\to\mathbb{R}$
is \emph{H\"older} with the exponent $1/2$, if $|f(u)-f(v)|\le C|u-v|^{1/2}$ for some constant $C>0$ and all $u,v\in \Omega$. Its \emph{H\"older semi-norm} is 
$$[f]_{\Omega} := \sup_{u,v\in \Omega: u\ne v}\frac{|f(u)-f(v)|}{|u-v|^{1/2}}.$$
A function $f\colon\overline\Omega\to\mathbb{R}$ is a \emph{$C^{k+1/2}$ function}, if all the partial derivatives of order $\le k$ exist in $\Omega$, have continuous extensions to the closure 
$\overline\Omega$, and are H\"older with the exponent $1/2$. 
Its \emph{H\"older norm} is
\begin{align*}
  \|f\|_{C^{k +1/2}}:= \|f\|_{C^{k}}+
   \sup_{\;0\le i\le k}\ \left[\frac{\partial^{k}f}{\partial x^i\partial y^{k-i}}\right]_{\Omega},
\end{align*}
where we use notation~\eqref{eq-norm}.
A \emph{$C^\infty$ function} $f\colon\overline{\Omega}\to\mathbb{R}$ is a continuous function for which all the partial derivatives of all orders exist in $\Omega$ and have continuous extensions to $\overline{\Omega}$.




\end{definition}

\begin{theorem}[Schauder's estimates] \label{thm-schauder} \textup{(See \cite[Theorems~1.1 and~1.2]{Koenig-11}; cf.~\cite[Theorem~7.3 and Remark~2 after it]{Agmon-etal-59} and \cite[end~of~\S6.3]{Gilbarg-Trudinger-83})}
For each $C^{\infty}$ function $g\colon\overline\Omega\to \mathbb{R}$, the equation $f_{xx}+f_{yy}=g$ has a unique $C^{\infty}$ solution $f\colon\overline\Omega\to\mathbb{R}$ vanishing on $\partial\Omega$.
There is a constant $C(\Omega)>0$ depending only on $\Omega$ 
such that 
\begin{equation*}
  \|f\|_{C^{2+1/2}}\le C(\Omega) \|g\|_{C^{1/2}}.
\end{equation*}
\end{theorem}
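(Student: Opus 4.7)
The plan is to establish this as a classical result combining Dirichlet problem solvability with the H\"older (Schauder) estimate. Since $\partial\Omega$ is analytic and hence $C^\infty$, every boundary point admits a barrier, so by the Perron method the Dirichlet problem for the Laplacian with continuous boundary data is uniquely solvable, and elliptic regularity up to the boundary promotes $C^\infty$ data to $C^\infty$ solutions on $\overline\Omega$. To produce the solution of $f_{xx}+f_{yy}=g$ vanishing on $\partial\Omega$, I would first form the Newtonian potential $w(x)=-\frac{1}{2\pi}\int_\Omega \log|x-y|\,g(y)\,dy$, which satisfies $w_{xx}+w_{yy}=g$ in $\mathbb{R}^2$, and then subtract the harmonic extension of $w|_{\partial\Omega}$. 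Uniqueness of the $C^\infty$ solution follows from the maximum principle applied to the difference of any two solutions.

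For the H\"older bound $\|f\|_{C^{2+1/2}}\le C(\Omega)\|g\|_{C^{1/2}}$, I would follow the standard interior--boundary decomposition of Schauder theory. The interior estimate rests on the scale-invariant decay of harmonic functions: if $\Delta f=g$ in a ball $B_R\subset\Omega$ and $p$ is the quadratic polynomial with $\Delta p\equiv g(x_0)$ chosen so that $f-p$ vanishes to second order at the center $x_0$, then $f-p$ satisfies a Poisson equation whose right-hand side has size $[g]_{\Omega}\,r^{1/2}$ on $B_r$. Iterating a decay estimate for harmonic functions over a geometric sequence of concentric balls extracts a uniform H\"older bound on the second derivatives, giving the interior piece of $\|f\|_{C^{2+1/2}}$ in terms of $\|g\|_{C^{1/2}}$ and $\|f\|_{C^0}$; the last factor is in turn controlled by the maximum principle applied to $f$ against a quadratic barrier.

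The boundary estimate is obtained by flattening $\partial\Omega$ through an analytic local chart, extending $f$ and $g$ to a full neighborhood by odd reflection across the straightened boundary (possible because $f\equiv 0$ there), and applying the interior estimate to the reflected problem. A finite cover of $\overline\Omega$ by such charts together with a partition of unity assembles the local bounds into the global one. The constant $C(\Omega)$ collects the norms of the charts and of the partition, which depend only on $\Omega$.

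The main obstacle is the boundary piece: pulling back the Laplacian under a nonlinear chart produces a variable-coefficient elliptic operator, so the reflection argument has to be combined with a small-perturbation Schauder estimate for operators whose principal part is close to $\Delta$. This in turn is proved by freezing coefficients at each point, quoting the already-established constant-coefficient interior estimate, and absorbing the error by choosing charts on sufficiently small scales. Once this loop is closed, tracking the dependence of $C(\Omega)$ on the geometry is pure bookkeeping rather than a conceptual difficulty.
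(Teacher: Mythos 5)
The paper states this as a classical theorem and supplies no proof, only references, so the only question is whether your sketch would stand on its own. Most of it is sound: existence via Newtonian potential plus harmonic correction, uniqueness via the maximum principle, the interior estimate by Campanato-style iteration against the quadratic polynomial with $\Delta p = g(x_0)$, and the absorption of the $\|f\|_{C^0}$ term by a barrier are all standard and correct. The perturbation/coefficient-freezing loop you flag as the main obstacle is also the right way to handle the variable coefficients produced by straightening the boundary.

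The genuine gap is in the reflection step. After flattening, you propose to extend $f$ and $g$ by odd reflection and then apply the interior estimate on a full ball. The odd reflection $\tilde f$ does satisfy $\Delta\tilde f=\tilde g$ distributionally, but $\tilde g$ is the odd reflection of $g$, which is \emph{discontinuous} across the flattened boundary whenever $g$ does not vanish there (and there is no reason for it to). Equivalently, $\tilde f_{yy}$ jumps by $2g(\cdot,0)$ across $\{y=0\}$, so $\tilde f\notin C^{2}$ of the full ball and the right-hand side is not in $C^{1/2}$; the interior Schauder estimate simply does not apply to the reflected problem. The standard fix is not to reflect $f$ and $g$ at all, but to reflect only the \emph{comparison harmonic functions}: at a boundary point one approximates $f$ by a polynomial $p$ vanishing on $\{y=0\}$ with $\Delta p=g(x_0)$, and the decay lemma is proved for harmonic functions with zero Dirichlet data on the flat portion (to which odd reflection does legitimately apply, since a harmonic function vanishing on a hyperplane odd-reflects to a harmonic function). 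Alternatively one works directly with the half-space Green's function as in Gilbarg--Trudinger, Lemma~6.4. Either repair closes the gap; as written, the reflection-of-$f$-and-$g$ step fails.
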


We use this result to obtain a recursive bound for $\|f^{(m)}\|_{C^{2+1/2}}$. Hereafter, abbreviate
$$\|f\|:=\|f\|_{C^{2+1/2}}.$$

\begin{lemma} \label{l-one-step} 
Starting with a harmonic function $f^{(0)}\colon\Omega\to\mathbb{R}$,
which extends to a real analytic function in $\overline\Omega$ and has a nonvanishing Hessian (even on $\partial\Omega$), 
define a sequence of $C^\infty$ functions $f^{(m)}\colon\overline\Omega\to\mathbb{R}$, where $m\ge 1$, inductively as unique solutions of~\eqref{eq-poisson} vanishing on $\partial\Omega$. Denote $K^{(0)}=f^{(0)}_{xx}f^{(0)}_{yy}-\left(f^{(0)}_{xy}\right)^2$. 
Then 
there is $C(\Omega)>0$ such that for each $m\ge 1$ we have
\begin{equation*}
 \textstyle \left\|f^{(m)}\right\|
  \le C(\Omega)
  \left\|\frac{1}{\sqrt{-K^{(0)}}}\right\|
  \left(
  m\sum_{r=0}^{m-1}\binom{m-1}{r}
  \left\|f^{(r)}\right\|
  \left\|f^{(m-r-1)}\right\|
  +\frac{1}{m+1}\sum_{r=2}^{m-1}\binom{m+1}{r}
  \left\|f^{(r)}\right\|
  \left\|f^{(m-r+1)}\right\|
  \right).
\end{equation*}
\end{lemma}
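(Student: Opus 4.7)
The plan is to apply Schauder's estimates (Theorem~\ref{thm-schauder}) directly to the Poisson equation~\eqref{eq-poisson}, which $f^{(m)}$ satisfies with vanishing Dirichlet data on $\partial\Omega$, and then estimate the $C^{1/2}$ norm of its right-hand side $g^{(m)}$ by products of $\|f^{(r)}\|$. Writing \eqref{eq-poisson} as $f^{(m)}_{xx}+f^{(m)}_{yy}=g^{(m)}$, Theorem~\ref{thm-schauder} gives $\|f^{(m)}\|\le C(\Omega)\,\|g^{(m)}\|_{C^{1/2}}$, so the remaining work is structural: bound each term in the explicit formula for $g^{(m)}$ in $C^{1/2}$ norm by $\|f^{(r)}\|\,\|f^{(s)}\|$ with the right combinatorial coefficient.

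The core tool is that $C^{1/2}(\overline\Omega)$ is a Banach algebra with a universal constant:
$$\|uv\|_{C^{1/2}}\le 2\,\|u\|_{C^{1/2}}\|v\|_{C^{1/2}},$$
which follows at once from $\|uv\|_{C^0}\le\|u\|_{C^0}\|v\|_{C^0}$ and the standard bound $[uv]_{\Omega}\le\|u\|_{C^0}[v]_{\Omega}+\|v\|_{C^0}[u]_{\Omega}$. Moreover, Definition~\ref{def-holder} immediately gives $\|f^{(r)}_{ij}\|_{C^{1/2}}\le \|f^{(r)}\|_{C^{2+1/2}}=\|f^{(r)}\|$ for every second-order partial derivative. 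Thus each product $f^{(r)}_{xy}f^{(m-r-1)}_{xy}$ or $f^{(r)}_{xx}f^{(m-r-1)}_{yy}$ in the numerator of~\eqref{eq-poisson} has $C^{1/2}$ norm at most $2\|f^{(r)}\|\,\|f^{(m-r-1)}\|$, and the same is true (up to a harmless factor of $4$) for the products $(f^{(r)}_{xx}+f^{(r)}_{yy})(f^{(m-r+1)}_{xx}+f^{(m-r+1)}_{yy})$ in the second sum.

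For the denominator $1/\sqrt{-K^{(0)}}$: by Remark~\ref{rem-nonvanishing-det}, the hypothesis that the Hessian of the harmonic function $f^{(0)}$ is nonvanishing on $\overline\Omega$ forces $K^{(0)}=f^{(0)}_{xx}f^{(0)}_{yy}-(f^{(0)}_{xy})^2<0$ uniformly on the compact set $\overline\Omega$. Since $f^{(0)}$ extends analytically, so does $1/\sqrt{-K^{(0)}}$, and in particular it lies in $C^{2+1/2}(\overline\Omega)$. Multiplication by it contributes at most an extra factor $2\,\|1/\sqrt{-K^{(0)}}\|_{C^{1/2}}\le 2\,\|1/\sqrt{-K^{(0)}}\|$ in the $C^{1/2}$ estimate. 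Collecting all universal constants (the Schauder constant, the factors of $2$ from the algebra property, and the harmless $2^{1-\delta_{m1}}\ge 1$ appearing in the denominator of~\eqref{eq-poisson}) into a single $C(\Omega)$ produces the stated inequality.

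The main obstacle is not depth but careful bookkeeping: tracking the combinatorial coefficients so that they match~\eqref{eq-poisson} exactly, and making sure that the inductive construction is internally consistent. The latter follows from Theorem~\ref{thm-schauder} itself, since the right-hand side $g^{(m)}$ is $C^\infty$ on $\overline\Omega$ whenever $f^{(0)},\dots,f^{(m-1)}$ are --- so each $f^{(m)}$ is indeed a $C^\infty$ function up to the boundary, which is what is needed for the H\"older-space calculus above to apply at the next step.
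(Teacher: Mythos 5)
Your proposal is correct and follows essentially the same route as the paper's proof: apply Theorem~\ref{thm-schauder} to the Poisson equation~\eqref{eq-poisson}, bound the right-hand side in $C^{1/2}$ via the submultiplicativity (Banach-algebra property) of the H\"older norm together with $\|f^{(r)}_{ij}\|_{C^{1/2}}\le\|f^{(r)}\|_{C^{2+1/2}}$, and invoke Remark~\ref{rem-nonvanishing-det} to control the denominator $\sqrt{-K^{(0)}}$. The only cosmetic difference is your explicit constant $2$ in the algebra inequality versus the paper's generic $C(\Omega)$ --- both are absorbed into the final constant, so nothing changes.
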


\begin{proof} 
    Since $f^{(0)}$ is harmonic 
    and has a nonvanishing Hessian (even on $\partial\Omega$), we get 
    $K^{(0)}<0$ in $\overline{\Omega}$ by Remark~\ref{rem-nonvanishing-det}.
    Denote $\left\|f\right\|{}':=\left\|f\right\|_{C^{1/2}}$ so that $\left\|f_{xx}\right\|{}',\left\|f_{xy}\right\|{}',\left\|f_{yy}\right\|{}'\le \left\|f\right\|$.  
    Applying Schauder's estimates (Theorem~\ref{thm-schauder}) to~\eqref{eq-poisson} and using the submultiplicativity of the H\"older norm 
    $\left\|fg\right\|'\le C(\Omega)\left\|f\right\|' \left\|g\right\|'$,
    we get, for some $C_1(\Omega)$, $C_2(\Omega)$, and $C(\Omega)$,
\begin{align*}
 \left\|f^{(m)}\right\| &\le  C_1(\Omega)\left\|\frac{m\sum_{r=0}^{m-1}\binom{m-1}{r}\left(f^{(r)}_{xy}f^{(m-r-1)}_{xy}
-f^{(r)}_{xx}f^{(m-r-1)}_{yy}\right)}{2^{1-\delta_{m1}}\sqrt{\left(f^{(0)}_{xy}\right)^2-f^{(0)}_{xx}f^{(0)}_{yy}}}
\right.
\\ &\quad-
\left.
\frac{\frac{1}{m+1}\sum_{r=2}^{m-1}\binom{m+1}{r}
\left(f^{(r)}_{xx}+f^{(r)}_{yy}\right) \left(f^{(m-r+1)}_{xx}+f^{(m-r+1)}_{yy}\right)
}{2^{1-\delta_{m1}}\sqrt{\left(f^{(0)}_{xy}\right)^2-f^{(0)}_{xx}f^{(0)}_{yy}}}\right\|'\\
&\le \textstyle C_2(\Omega) \left\|\frac{1}{\sqrt{-K^{(0)}}}\right\|'\left( m\sum_{r=0}^{m-1}\binom{m-1}{r}\left(
\left\|f^{(r)}_{xy}\right\|' \left\|f^{(m-r-1)}_{xy}\right\|'+
\left\|f^{(r)}_{xx}\right\|' \left\|f^{(m-r-1)}_{yy}\right\|'
\right)
\right.
\\ &\quad+ \textstyle\left.\frac{1}{m+1}\sum_{r=2}^{m-1}\binom{m+1}{r}
\left(\left\|f^{(r)}_{xx}\right\|'+\left\|f^{(r)}_{yy}\right\|'\right)\left(\left\|f^{(m-r+1)}_{xx}\right\|'+\left\|f^{(m-r+1)}_{yy}\right\|'\right)\right)\\
 &\le \textstyle C(\Omega) \left\|\frac{1}{\sqrt{-K^{(0)}}}\right\|\left(m\sum_{r=0}^{m-1}\binom{m-1}{r} \left\|f^{(r)}\right\|\left\|f^{(m-r-1)}\right\| + \frac{1}{m+1}\sum_{r=2}^{m-1}\binom{m+1}{r}
  \left\|f^{(r)}\right\|
  \left\|f^{(m-r+1)}\right\|\right).\\[-0.8cm]
\end{align*}
\end{proof}

Now we majorize $\left\|f^{(m)}\right\|$ by the Taylor coefficients of an explicit real analytic function. Abbreviate $M:=\left\|f^{(0)}\right\|$ and $N:=C(\Omega)\left\|{1}/{\sqrt{-K^{(0)}}}\right\|.$

\begin{lemma} \label{l-reccurence} Let $M,N>0$. 
Define a sequence $a^{(m)}=a^{(m)}(M,N)$ 
recursively by $a^{(0)}=M$ and
\begin{equation*}
  a^{(m)} :=
  Nm\sum_{r=0}^{m-1}\binom{m-1}{r}
  a^{(r)}
  a^{(m-r-1)}
  +\frac{N}{m+1}\sum_{r=2}^{m-1}\binom{m+1}{r}
  a^{(r)}a^{(m-r+1)}
\end{equation*}
for each $m=1,2,\dots$. Then $\sum_{m=0}^{\infty}a^{(m)}t^m/m!$ converges
in a neighborhood of $0$, and the sum 
equals
\begin{equation}\label{eq-l-reccurence}
a(t)=\frac{M+t(M^2N+1/(2N))-t\sqrt{1/(4N^2)-t(2M^3N+M/N)-t^2M^4N^2}}{t^2+1}.
\end{equation}
\end{lemma}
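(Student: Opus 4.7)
The plan is to encode the recursion as an algebraic equation for the generating function $A(t) := \sum_{m=0}^{\infty} a^{(m)} t^m / m!$ and then read off \eqref{eq-l-reccurence}. Since the recursion involves only $a^{(r)}$ with $r \leq m-1$, the formal power series $A(t)$ is well-defined, and one computes $a^{(0)} = M$, $a^{(1)} = NM^2$. I would multiply the recursion by $t^m/m!$ and sum over $m \geq 1$: a shift of index in the first sum gives $Nt \cdot A(t)^2$. For the second sum, the substitution $n = m+1$ produces $\sum_{r=2}^{n-2}\binom{n}{r} a^{(r)} a^{(n-r)}$, which I would rewrite as $c_n := \sum_{r=0}^n \binom{n}{r} a^{(r)} a^{(n-r)}$ minus the boundary contributions at $r \in \{0, 1, n-1, n\}$. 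After careful bookkeeping (including the degenerate case $n = 2$, where $r = 1$ and $r = n-1$ coincide and require a correction of $+(a^{(1)})^2 t^2$), everything collapses into the perfect-square identity
\begin{equation*}
  t(A(t) - M) - Nt^2 A(t)^2 = N\bigl(A(t) - M - NM^2 t\bigr)^2.
\end{equation*}

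Next, I would treat this as a quadratic in $A$ and recover \eqref{eq-l-reccurence}. Completing the square in $A$ transforms it into
\begin{equation*}
  \bigl((1+t^2)A - M - t(M^2N + 1/(2N))\bigr)^2 = t^2\bigl(1/(4N^2) - t(2M^3N + M/N) - t^2 M^4 N^2\bigr).
\end{equation*}
The radicand on the right equals $1/(4N^2) > 0$ at $t = 0$, so taking the minus branch of the square root gives a function analytic in a neighborhood of $t = 0$, which is exactly $a(t)$ from \eqref{eq-l-reccurence}.

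It remains to argue that the formal $A(t)$ coincides with this analytic branch. Both analytic solutions of the quadratic have $A(0) = M$ (a double root of the quadratic at $t = 0$), so I would distinguish them by $A'(0)$. Substituting $A(t) = M + tC(t)$ into the functional equation and dividing by $t^2$ yields $C(t) - N(M + tC(t))^2 = N(C(t) - NM^2)^2$; at $(t, C) = (0, NM^2)$ both sides vanish and the $C$-derivative of the difference equals $1 \neq 0$, so the implicit function theorem supplies a unique analytic $C(t)$ with $C(0) = NM^2$, and hence a unique analytic $A(t)$ with $A(0) = M$ and $A'(0) = NM^2$. A short Taylor expansion of \eqref{eq-l-reccurence} shows that its first two coefficients are $M$ and $NM^2$, while the formal $A(t)$ from the recursion has these values by construction. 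Thus \eqref{eq-l-reccurence} equals $A(t)$ near $t = 0$, which gives both the closed form and the convergence of $\sum a^{(m)} t^m/m!$.

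The main obstacle I expect is the combinatorial simplification in the first step: one must carefully track the four boundary indices $r \in \{0, 1, n-1, n\}$ when converting $\sum_{r=2}^{n-2}$ into $c_n$ minus corrections, handle the overlap at $n = 2$, and verify that the result is a perfect square. This simplification is delicate: it depends on the precise combinatorial coefficients $\binom{m-1}{r}$ and $\binom{m+1}{r}$ in the recursion, and the emergence of the perfect square $(A - M - NM^2 t)^2$ (rather than a generic quadratic) is what makes the subsequent quadratic solvable in closed form.
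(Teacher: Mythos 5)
Your proposal is correct and follows essentially the same route as the paper: passing to the exponential generating function $a(t)$, deriving the quadratic functional identity (your ``perfect‑square identity'' is the paper's equation $a(t) = M + Nt\,a(t)^2 + \tfrac{N}{t}\bigl(a(t) - M^2Nt - M\bigr)^2$ after multiplying by $t$), solving the quadratic, and selecting the minus branch by matching $a'(0) = M^2N$. The only cosmetic difference is that the paper avoids your boundary-index bookkeeping by working with $b_m = a^{(m)}/m!$ and observing directly that $\tfrac{N}{t}\bigl(a(t) - M - M^2Nt\bigr)^2 = \tfrac{N}{t}\bigl(\sum_{m\ge 2} b_m t^m\bigr)^2$ reproduces the second sum, and it settles the branch choice by a one-line check of $a'(0)$ rather than your (also valid, but heavier) implicit-function-theorem argument.
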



\begin{proof}
Denote $b_{m}:=a^{(m)}/m!$ for each $m\ge 0$ and introduce formal power series $a(t) =  \sum_{m=0}^{\infty}a^{(m)}t^m/m!$. Then 
 \begin{align*}
    b_{m} &= N \sum_{r=0}^{m-1}b_{r}b_{m-r-1}+ N \sum_{r=2}^{m-1}b_{r}b_{m-r+1} \qquad\text{for } m\ge 1.\\ 
    \intertext{In particular, $b_0 = M$ and $b_1 = M^2N$. Hence, by the properties of formal power series, we get}
    a(t) &= b_{0} + N t\sum_{m=1}^{\infty}\sum_{r=0}^{m-1}b_{r}b_{m-r-1}t^{m-1} + \frac{N}{t}\sum_{m=3}^{\infty}\sum_{r=2}^{m-1}b_{r}b_{m-r+1}t^{m+1}= M + Nta(t)^2 + \frac{N}{t}\left(a(t)-M^2Nt-M\right)^2.
 \end{align*}
Solving the resulting quadratic equation with respect to $a(t)$, we get~\eqref{eq-l-reccurence}. 
We have a minus sign in front of the square root because $a'(0)=b_1=M^2N$.
Since the right side of~\eqref{eq-l-reccurence} is analytic in a neighborhood of $t=0$, the series $\sum_{m=0}^{\infty}a^{(m)}t^m/m!$ converges in a neighborhood of $0$ and the sum equals~\eqref{eq-l-reccurence}. 
\end{proof}

By induction on $m$, Lemmas~\ref{l-poisson}, \ref{convergence}, \ref{l-one-step}, and~\ref{l-reccurence} imply the following. 

\begin{corollary}\label{fmlessiso} Under the assumptions and notation of Lemmas~\ref{l-one-step} and~\ref{l-reccurence}, we have
\begin{equation*}
  \left\|f^{(m)}\right\|\le a^{(m)}\left(
  \left\|f^{(0)}\right\|,
  C(\Omega)\left\|\frac{1}{\sqrt{-K^{(0)}}}\right\|\right) \qquad\text{for each $m\ge 0$.}
\end{equation*}
Hence, series~\eqref{eq-expansion} converges in the $C^{2+1/2}$ norm for 
sufficiently small $|t|$, and the sum satisfies~\eqref{eq-isotropic-crpc}.
\end{corollary}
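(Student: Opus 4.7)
The plan is to induct on $m$ to establish the norm bound, then turn this bound into convergence via the Weierstrass M-test, and finally invoke Lemma~\ref{l-poisson} to recognize that the resulting sum solves the PDE. Throughout, I abbreviate $M := \|f^{(0)}\|$ and $N := C(\Omega)\|1/\sqrt{-K^{(0)}}\|$, matching the notation set just before Lemma~\ref{l-reccurence}.

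For the inductive bound, the base case $m=0$ is immediate since $a^{(0)}(M,N) = M = \|f^{(0)}\|$. For the inductive step, I assume that $\|f^{(r)}\| \le a^{(r)}(M,N)$ for every $r < m$. The functions $f^{(m)}$ from Lemma~\ref{l-one-step} are defined as the Schauder solutions of~\eqref{eq-poisson}, so that lemma gives an upper estimate on $\|f^{(m)}\|$ in terms of $\|f^{(r)}\|\,\|f^{(m-r-1)}\|$ for $r \le m-1$ and $\|f^{(r)}\|\,\|f^{(m-r+1)}\|$ for $2 \le r \le m-1$. I must check that all indices appearing are strictly less than $m$: for the second sum, when $r=2$ we have $m-r+1 = m-1$ and when $r=m-1$ we have $m-r+1 = 2$, so indeed every index lies in $\{0, 1, \dots, m-1\}$. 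Applying the induction hypothesis to each factor and comparing with the recursive definition of $a^{(m)}(M,N)$ in Lemma~\ref{l-reccurence} gives exactly $\|f^{(m)}\| \le a^{(m)}(M,N)$.

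Once the bound is in place, Lemma~\ref{l-reccurence} yields that $\sum_{m=0}^{\infty} a^{(m)}(M,N)\, t^m/m!$ converges in a neighborhood of $t=0$, being the Taylor series of the explicit analytic function~\eqref{eq-l-reccurence}. Lemma~\ref{convergence} with $k=2$ (or, more strongly, with the $C^{2+1/2}$ norm, which dominates $C^2$) then upgrades the numerical majorization to uniform convergence of~\eqref{eq-expansion} in the $C^{2+1/2}$ norm, hence in particular in $C^2$, for all sufficiently small $|t|$. To identify the sum as a solution of~\eqref{eq-isotropic-crpc}, I note that $f^{(0)}$ is harmonic with nonvanishing Hessian, so by Remark~\ref{rem-nonvanishing-det} we have $f^{(0)}_{xx}f^{(0)}_{yy} - (f^{(0)}_{xy})^2 < 0$ on $\overline{\Omega}$; hence the hypotheses of Lemma~\ref{l-poisson} are met. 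Since each $f^{(m)}$ satisfies~\eqref{eq-poisson} by construction in Lemma~\ref{l-one-step} (and $f^{(0)}$ does trivially because both sums in~\eqref{eq-poisson} are empty/zero when $m=0$), the ``only if'' direction of Lemma~\ref{l-poisson} shows that the sum~\eqref{eq-expansion} solves~\eqref{eq-isotropic-crpc}.

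The only point requiring a bit of care is the index-tracking in the inductive step: the coefficients produced by Lemma~\ref{l-one-step} must match literally the coefficients in the recurrence defining $a^{(m)}$, and one must verify that no new index $\ge m$ sneaks into the second sum. Apart from this bookkeeping, the argument is a routine combination of the preceding four lemmas.
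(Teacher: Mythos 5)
Your proof is correct and matches the paper's approach exactly: the paper simply states ``By induction on $m$, Lemmas~\ref{l-poisson}, \ref{convergence}, \ref{l-one-step}, and~\ref{l-reccurence} imply the following,'' and you have spelled out precisely that induction, including the index-bookkeeping that keeps the recursion well-founded and the extension of Lemma~\ref{convergence} to the $C^{2+1/2}$ norm. One tiny slip: to pass from ``each $f^{(m)}$ satisfies~\eqref{eq-poisson}'' to ``the sum satisfies~\eqref{eq-isotropic-crpc}'' you invoke the \emph{if} direction of Lemma~\ref{l-poisson}, not the \emph{only if} direction.
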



\begin{remark}
This corollary and its proof remain true if we only assume that $\partial\Omega$ and $f^{(0)}\colon\overline\Omega\to\mathbb{R}$
are of class $C^{2+1/2}$ (instead of the analyticity), so that we get a $C^{2+1/2}$ solution to~\eqref{eq-isotropic-crpc}. 
\end{remark}

The analyticity of the sum is guaranteed by Friedman's theorem, 
which we state in a particular case.

\begin{theorem}[Friedman] 
(See \cite[Theorem~5, remark after it, and the bottom of p.~44]{Friedman-58}; cf.~\cite[Chapter~4, p.~63]{petrowsky-1996-i}.)
\label{th-petrowski-short}
Consider the partial differential 
equation
\begin{equation}\label{eq-petrowski}
F(f_{xx},f_{xy},f_{yy},f_{x},f_{y})=0,
\end{equation}
where $F$ is a complex analytic function in a domain $D\subset\mathbb{C}^5$
such that for all $(\xi_1,\xi_2)\in\mathbb{R}^2\setminus\{(0,0)\}$,
\begin{equation}\label{eq-l-petrowski}
\frac{\partial{F}}{\partial f_{xx}}\xi_1^2+
\frac{\partial{F}}{\partial f_{xy}}\xi_1\xi_2+
\frac{\partial{F}}{\partial f_{yy}}\xi_2^2\ne 0.
\end{equation}
Let $f\colon \overline\Omega\to \mathbb{R}$ be a $C^{2+1/2}$ function 
such that for all $(x,y)\in \overline\Omega$, we have $(f_{xx},f_{xy},f_{yy},f_{x},f_{y})\subset D$ and~\eqref{eq-petrowski} holds. Then $f$ is real analytic in the closed domain.
\end{theorem}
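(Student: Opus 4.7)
The plan is to proceed in two stages: first upgrade the $C^{2+1/2}$ regularity to $C^\infty$, and then all the way to real analyticity, in the interior; then extend the analyticity up to the boundary. The ellipticity hypothesis~\eqref{eq-l-petrowski} is the crucial input that makes both steps possible, because it means the linearization of $F$ at the solution $f$ is a uniformly elliptic linear operator.

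For interior analyticity, I would run a differentiation-and-bootstrap scheme. Differentiating~\eqref{eq-petrowski} with respect to $x$ gives a linear elliptic equation whose leading coefficients $\partial F/\partial f_{xx},\partial F/\partial f_{xy},\partial F/\partial f_{yy}$ are $C^{1/2}$ (since $f\in C^{2+1/2}$) and whose inhomogeneous term involves $f_x$ and lower-order derivatives. Applying the linear Schauder estimates (Theorem~\ref{thm-schauder} and its variable-coefficient analogue) yields $f_x\in C^{2+1/2}$, hence $f\in C^{3+1/2}$ on every compactly contained subdomain; iterating, $f\in C^\infty$ in the interior. To upgrade $C^\infty$ to analyticity, the classical route of Bernstein--Morrey--Friedman is to prove by induction on $|\alpha|$ a Cauchy-type bound
\[
\|D^\alpha f\|_{L^\infty(K)} \le C^{|\alpha|+1}\,|\alpha|!
\]
on every compact $K\Subset\Omega$, with $C$ depending on $K$ and on bounds for $f$ and $F$. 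The induction step differentiates~\eqref{eq-petrowski} exactly $|\alpha|$ times, uses an interior elliptic estimate for $D^\alpha f$, and controls the right-hand side via Faà di Bruno combinatorics together with the analytic majorant of $F$.

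For analyticity up to the boundary, the plan is to reduce locally to a half-plane problem via an analytic change of variables. Since $\partial\Omega$ is analytic, around each boundary point there is an analytic diffeomorphism sending a neighborhood in $\overline\Omega$ to a neighborhood of the origin in the closed upper half-plane. In the new coordinates~\eqref{eq-petrowski} remains an analytic elliptic equation and $f$ remains $C^{2+1/2}$. One then extends $f$ analytically across the straightened boundary by the complex extension method: regard $(x,y)$ as complex variables, use the analyticity of $F$ to analytically continue the PDE to a complex neighborhood of the origin, and construct a holomorphic extension of $f$ that satisfies the continued equation. The Cauchy-type derivative bounds obtained in the interior argument, now carried out with estimates up to the straightened boundary, furnish convergence of the corresponding Taylor series.

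The main obstacle is the combinatorial control of the factorial growth bound when differentiating the nonlinear equation $|\alpha|$ times: one has to bound products of derivatives $D^{\beta_1}f\cdots D^{\beta_k}f$ with $\beta_1+\cdots+\beta_k=\alpha$ in terms of the induction hypothesis, and close the estimate uniformly in $|\alpha|$ using the fact that the analytic majorant of a product is the product of majorants. The boundary case compounds this difficulty, because the available elliptic estimates up to the boundary are weaker than in the interior, and because no explicit boundary data is prescribed in the hypothesis, so the complex extension has to be constructed from the PDE itself rather than verified against a known trace.
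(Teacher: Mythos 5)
The paper gives no proof for this statement; it is cited from Friedman's 1958 paper, so there is nothing in the paper to compare your argument against.

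Your interior argument (Schauder bootstrap to $C^\infty$, then Cauchy-type factorial bounds on $D^\alpha f$, closed by Faà di Bruno combinatorics and an analytic majorant for $F$) is the standard Bernstein--Morrey--Friedman scheme and would deliver interior analyticity. The genuine gap is the boundary step, and your final sentence already flags it: no boundary data is prescribed in the hypothesis, so your complex extension has to be manufactured from the PDE alone --- and that cannot close. In fact the conclusion ``real analytic in the closed domain'' is false as literally stated: take $F(A,B,C,D,E)=A+C$ (the Laplace equation), $\Omega$ the unit disc, and $f$ the harmonic extension of a $C^{2+1/2}$ but non-analytic boundary datum. Then $f\in C^{2+1/2}(\overline\Omega)$, the equation and the ellipticity condition~\eqref{eq-l-petrowski} hold, $f$ is analytic in the interior, but not up to $\partial\Omega$. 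What Friedman's cited result actually delivers is boundary analyticity when the boundary portion is analytic \emph{and} the prescribed Dirichlet data along it is analytic; the paper's statement silently omits the boundary-data hypothesis, but in the actual applications (Corollaries~\ref{cor-crpr-analyticity-isotropic} and~\ref{cor-crpr-analyticity}, via Corollaries~\ref{fmlessiso} and~\ref{fmlesseuc}) the trace of $f$ on $\partial\Omega$ is the restriction of the real analytic $f^{(0)}$, so the missing hypothesis is met in practice. To make your boundary argument work, add the analytic trace to the hypotheses, then anchor the holomorphic extension against that Dirichlet data on the straightened boundary rather than trying to derive it from the equation alone.
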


\begin{corollary}[Analyticity of isotropic CPRC surfaces] \label{cor-crpr-analyticity-isotropic}
    Any $C^{2+1/2}$ function $f\colon \overline\Omega\to \mathbb{R}$ satisfying equation~\eqref{eq-isotropic-crpc} for some $t\in\mathbb{R}$ and the inequality $f_{xx}f_{yy}-f^2_{xy}<0$ is real analytic in the closed domain.
\end{corollary}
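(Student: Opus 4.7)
The plan is to apply Friedman's theorem (Theorem~\ref{th-petrowski-short}) to equation~\eqref{eq-isotropic-crpc}, after rewriting it in a form to which that theorem is applicable. The square root in \eqref{eq-isotropic-crpc} is the first obstacle, since Friedman's theorem requires $F$ to be complex analytic, but the principal branch of $\sqrt{\cdot}$ has a branch point at $0$.

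First, I would square both sides of \eqref{eq-isotropic-crpc}: any $C^{2+1/2}$ solution $f$ also satisfies
\[
F(f_{xx},f_{xy},f_{yy},f_x,f_y) = 0, \qquad F(p,q,r,u,v) := (p+r)^2 - t^2(q^2 - pr).
\]
This $F$ is a polynomial, hence complex analytic on $D = \mathbb{C}^5$, so the squared equation meets the analyticity hypothesis of Friedman's theorem. Squaring potentially enlarges the solution set, but this is irrelevant here, since we are starting with a fixed $f$ that solves the unsquared equation and only need $F$ to vanish on its second derivatives.

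Next, I would verify the ellipticity condition \eqref{eq-l-petrowski} pointwise at the given $f$. A direct computation gives
\[
\partial_p F = 2(p+r) + t^2 r,\quad \partial_q F = -2 t^2 q,\quad \partial_r F = 2(p+r) + t^2 p.
\]
Substituting $p+r = t\sqrt{q^2 - pr}$ from \eqref{eq-isotropic-crpc}, the discriminant of the resulting quadratic form in $(\xi_1,\xi_2)$ collapses (after routine cancellations) to $-4 t^2 D(t^2 + 4)$, where $D := q^2 - pr > 0$ by the assumption $f_{xx}f_{yy}-f_{xy}^2<0$. For $t \neq 0$ this is strictly negative, so the symbol is a definite quadratic form and never vanishes on $\mathbb{R}^2 \setminus \{(0,0)\}$; Friedman's theorem then yields analyticity of $f$ on $\overline{\Omega}$. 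The degenerate case $t=0$ is handled separately: equation \eqref{eq-isotropic-crpc} reduces to the Laplace equation, and one applies Friedman's theorem with the linear $F(p,q,r,u,v)=p+r$, whose symbol $\xi_1^2 + \xi_2^2$ trivially satisfies \eqref{eq-l-petrowski}.

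The main subtlety I expect is that squaring superficially threatens to destroy ellipticity, because $\partial_p F = 2(p+r) + t^2 r$ is close to vanishing whenever both $t$ and $p+r$ are small; the identity $p+r = t\sqrt{D}$ must be invoked on the nose to see that the discriminant has a definite sign. Once that algebraic cancellation is carried out, the strict inequality $f_{xx}f_{yy}-f_{xy}^2<0$ alone suffices to conclude ellipticity and hence analyticity by Friedman's theorem.
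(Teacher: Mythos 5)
Your route is genuinely different from the paper's, and the key computation is correct, but there is one omitted step before Friedman's theorem actually applies.

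The paper keeps the square root, setting $F(A,B,C)=A+C-t\sqrt{B^2-AC}$, and handles the branch-point issue by restricting to the domain $D'$ given by $B^2-AC>\varepsilon^2$ and $|F|<\varepsilon$; on that domain the discriminant of the symbol is bounded above by $-3/4$ uniformly in $t$, so a single argument covers all $t$. You instead square the equation, obtaining the polynomial $F(p,q,r,u,v)=(p+r)^2-t^2(q^2-pr)$, which is entire and so sidesteps all branch-cut bookkeeping, but whose symbol degenerates on the solution set when $t=0$; you correctly notice this and split off the case $t=0$, reverting to the Laplace operator there. Your discriminant computation is also right: $b^2-4ac=4t^4(q^2-pr)-(16+8t^2)(p+r)^2$, which on the solution set $(p+r)^2=t^2(q^2-pr)$ collapses to $-4t^2(q^2-pr)(t^2+4)<0$ for $t\ne 0$. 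So the core of the argument is sound and trades the paper's uniformity (one $F$ for all $t$) for a cleaner analyticity hypothesis.

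The gap is this: Theorem~\ref{th-petrowski-short} requires the ellipticity condition \eqref{eq-l-petrowski} to hold throughout the complex domain $D\subset\mathbb{C}^5$ on which $F$ is analytic, and this domain must contain the range of $(f_{xx},f_{xy},f_{yy},f_x,f_y)$. Your $F$ is analytic on all of $\mathbb{C}^5$, but ellipticity of the squared equation certainly fails there: away from the solution set, the unsubstituted discriminant $4t^4(q^2-pr)-(16+8t^2)(p+r)^2$ can be positive (take $q^2-pr$ large and $p+r$ small), so the symbol is indefinite. You yourself note that the cancellation "must be invoked on the nose," but then you do not take the next step of shrinking $D$. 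What is needed is the paper's compactness observation (there is $\varepsilon>0$ with $f_{xx}f_{yy}-f_{xy}^2<-\varepsilon^2$ on $\overline\Omega$), after which one takes $D$ to be a sufficiently small complex neighborhood of the compact set $\bigl\{(f_{xx},f_{xy},f_{yy},f_x,f_y)(x,y):(x,y)\in\overline\Omega\bigr\}$ — a neighborhood on which both $q^2-pr$ is bounded below and $|F|$ is small, so that by continuity the discriminant stays negative. This is a routine argument, but without it the hypotheses of Theorem~\ref{th-petrowski-short} have not been verified, so the line should be added.
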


\begin{proof}  By compactness, there exists $\varepsilon>0$ such that 
$f_{xx}f_{yy}-\left(f_{xy}\right)^2 < -\varepsilon^2$.
Introduce the function $$F(A,B,C)=A+C-t\sqrt{B^{2}-AC},$$ 
so that~\eqref{eq-isotropic-crpc} has form $F(f_{xx},f_{xy},f_{yy}) = 0$.
The function $F(A,B,C)$ is real analytic in the domain $D'\subset\mathbb{R}^3$ given by $B^{2}-AC>\varepsilon^2$ and $|F(A,B,C)|<\varepsilon$ (and complex analytic in its neighborhood $D\subset\mathbb C^{3}$).
We have 
\begin{equation}\label{eq-deteriso}
\frac{\partial F}{\partial A}\xi_1^2 + \frac{\partial F}{\partial B}\xi_1 \xi_2 + \frac{\partial F}{\partial C}\xi_2^2 =  \frac{\left(2 \sqrt{B^2-A C}+C t\right)\xi_1^2 -2 B t\xi_1 \xi_2 +\left(2 \sqrt{B^2-A C}+A t\right)\xi_2^2 }{2 \sqrt{B^2-A C}} \ne 0
\end{equation}
for all $(A,B,C)\in D'$ and $(\xi_1,\xi_2)\in\mathbb{R}^2\setminus\{(0,0)\}$, 
because the discriminant of the quadratic form 
is
$$-\frac{1}{4}t^2-\frac{A+C-t \sqrt{B^2-A C}}{2\sqrt{B^2-A C}}\cdot t -1 < 
-\frac{1}{4}t^2+\frac{\varepsilon}{2\varepsilon}|t|-1\le -\frac{3}{4}<0.
$$
Then~\eqref{eq-deteriso} holds in a neighborhood~$D$ as well. 
By Friedman's theorem (Theorem~\ref{th-petrowski-short}), $f$ is real analytic. 
\end{proof}

\begin{proof}[Proof of 
Theorem~\ref{thm-isotropic-equivalent}.] The existence of a 
family of $C^{2+1/2}$ functions $f(x,y,t)$ 
satisfying~\eqref{eq-isotropic-crpc} follows from Corollary~\ref{fmlessiso}. 
By compactness and Remark~\ref{rem-nonvanishing-det}, 
$f_{xx}f_{yy}-\left(f_{xy}\right)^2 < 0$ 
for sufficiently small $|t|$. Then by Friedman's theorem (Corollary~\ref{cor-crpr-analyticity-isotropic}), $f(x,y,t)$ is real analytic in $x$ and $y$ jointly. Thus, by Osgood’s lemma, $f(x,y,t)$ is real analytic in $x, y, t$ jointly. The uniqueness of the real analytic family $f(x,y,t)$ 
follows from Corollary~\ref{cor-uniqueness}. 
\end{proof}

Finally, Corollary~\ref{thm-plateau-iso} follows from Theorem~\ref{thm-isotropic-equivalent} and basic properties of harmonic functions.

\begin{proof}[Proof of Corollary~\ref{thm-plateau-iso}]
Let $\Gamma$ be the Jordan curve in the corollary. Since $\Gamma$ has no more than two common points (counting multiplicities) with any $z$-parallel plane, it follows
that $\Gamma$ has no $z$-parallel tangents, and the projection of $\Gamma$ to the $xy$ plane is the boundary of a convex Jordan domain $\Omega$ with analytic boundary. 

Since $\Gamma$ is analytic and has no $z$-parallel tangents, there is a continuous function $u\colon\overline\Omega\to \mathbb{R}$, harmonic in $\Omega$, whose graph spans $\Gamma$. Moreover, $u$ extends to a real analytic function $\tilde u\colon \tilde\Omega\to\mathbb{R}$, where $\tilde\Omega$ is a scaling of $\Omega$ centered at an interior point with a scaling factor $1+\varepsilon$ for a small $\varepsilon>0$. In particular, $u$ is a real analytic function in the closed domain~$\overline\Omega$.

Since $\Gamma$ has no more than four 
common points (counting multiplicities) with any plane,
the same is true for any analytic curve $\tilde\Gamma$ sufficiently close in the $C^\infty$ norm. 
Take $\tilde\Gamma$ to be the graph of the restriction of $\tilde u$ to $\partial\tilde\Omega$.
If the graph of $u$ had a flat point, then the tangent plane at this point would intersect $\tilde\Gamma$ at six or more points (this simple assertion is proved \cite[\S373]{nitsche-1975} in the case when $\tilde\Omega$ is a unit disc, but the same argument applies to any $\tilde\Omega$). Thus, $u$ has a nonvanishing Hessian (even on $\partial\Omega$).

The application of Theorem~\ref{thm-isotropic-equivalent} to the function $u$ concludes the proof.
\end{proof}

\section{Euclidean CRPC surfaces}\label{sec-euc}

The proof of Theorem~\ref{thm-euclidean} is parallel to the one of 
Theorem~\ref{thm-isotropic-equivalent}, 
only~\eqref{eq-isotropic-crpc} is replaced by the equation
\begin{equation}\label{eq-euclidean-crpc}
  (1+f_y^2)f_{xx}-2f_xf_yf_{xy}+(1+f_x^2)f_{yy}=t\sqrt{\left(1+f_x^2+f_y^2\right)\left(f_{xy}^2-f_{xx}f_{yy}\right)},
\end{equation}
equivalent to condition $4H^2 = -t^2K$ for the surface $z=f(x,y,t)$. We search for a solution in the form of series~\eqref{eq-expansion}. 
For any algebraic expression $F=F(f_{xx},f_{xy},f_{yy},f_{x},f_{y})$, such as $H$ and $K$, denote by $F^{(m)}$ its $m$-th derivative with respect to $t$ evaluated at $t = 0$. 
However, when the convergence of~\eqref{eq-expansion} is not assumed (e.g., in Corollary~\ref{cor-minimal-shauder} and Lemma~\ref{l-one-step-euc} below), then $f^{(m)}$ denotes a sequence of functions, not necessarily derivatives of one function $f(x,y,t)$,
and $F^{(0)}$ means $F\left(f^{(0)}_{xx},f^{(0)}_{xy},f^{(0)}_{yy},f^{(0)}_{x},f^{(0)}_{y}\right)$.

We fix a Jordan domain $\Omega$ with an analytic boundary, a parameter $\varepsilon>0$, and use the notation from Definition~\ref{def-holder}.
We start with a recurrence for the coefficients of series~\eqref{eq-expansion}. 

\begin{lemma}\label{l-euc} 
Assume that for a sequence of $C^2$ functions $f^{(m)}\colon\Omega \to\mathbb{R}$, we have $f^{(0)}_{xx}f^{(0)}_{yy}-\left(f^{(0)}_{xy}\right)^2< 0$, and series~\eqref{eq-expansion} $C^2$-converges for $|t|<\varepsilon$. Then the sum
satisfies~\eqref{eq-euclidean-crpc} if and only if $H^{(0)}=0$ and for each $m=1,2,\dots,$ we have
\begin{equation}\label{eq-elliptic}
\begin{split}
H_{0}(f^{(m)})  =
-\sum_{r=1}^{m-1}\binom{m}{r}H_{r}(f^{(m-r)}) + \frac{-mK^{(m-1)}-\frac{4}{m+1}\sum_{r=2}^{m-1}\binom{m+1}{r}
H^{(r)}H^{(m-r+1)}}{2^{1-\delta_{m1}}\sqrt{-K^{(0)}}},
\end{split}
\end{equation}
where the linear differential operator ${H}_{r}(u)$ is defined in terms of the left side of~\eqref{eq-expansion} as
\begin{multline}\label{hru}
 H_{r}(u) = \left(1+f_y^2\right)^{(r)}u_{xx}-2 \left(f_xf_y\right)^{(r)}u_{xy} +\left(1+f_x^2\right)^{(r)}u_{yy}  \\
+2^{\delta_{r0}}\left(f_{yy}^{(0)}f_x^{(r)} -f_{xy}^{(0)}f_y^{(r)}\right) u_x
 +  2^{\delta_{r0}}\left(f_{xx}^{(0)}f_y^{(r)} -f_{xy}^{(0)}f_x^{(r)}\right) u_y, 
\end{multline}
and the \emph{normalized mean and Gaussian curvatures} are defined as 
\begin{equation}\label{H_and_K_euc}
    \begin{split}
       H = \frac{1}{2}\left((1+f_y^2)f_{xx}-2f_xf_yf_{xy}+(1+f_x^2)f_{yy}\right) \quad\text{ and }\quad K = - \left(1+f_x^2+f_y^2\right)\left(f_{xy}^2-f_{xx}f_{yy}\right). 
    \end{split}
\end{equation}
\end{lemma}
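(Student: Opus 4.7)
My plan is to mirror the proof of Lemma~\ref{l-poisson}, but with more careful Leibniz bookkeeping because $H$ and $K$ in~\eqref{H_and_K_euc} are now nonlinear polynomial expressions in the partial derivatives of $f$ rather than simply $(f_{xx}+f_{yy})/2$ and $f_{xx}f_{yy}-f_{xy}^2$. First I would observe that the $C^2$-convergence of~\eqref{eq-expansion} makes $f_x,f_y,f_{xx},f_{xy},f_{yy}$ pointwise real analytic in $t$ with Taylor coefficients $f^{(m)}_x,\dots,f^{(m)}_{yy}$, so $H(t)$ and $K(t)$ from~\eqref{H_and_K_euc} are also pointwise real analytic in $t$. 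Since~\eqref{eq-euclidean-crpc} is precisely $2H=t\sqrt{-K}$, it is equivalent to the polynomial equation $4H^2=-t^2K$ together with $H(0)=0$ and the positive-sign normalization $2H'(0)=+\sqrt{-K(0)}$. Applying Lemma~\ref{der-mean} pointwise in $(x,y)$ (the hypothesis $K^{(0)}<0$ holds by assumption), this equation is equivalent to $H^{(0)}=0$ and~\eqref{H_m} with the ``$+$'' sign for every $m\ge 1$.

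The heart of the argument is to re-express $2H^{(m)}$ as the operator $H_0$ applied to the unknown $f^{(m)}$ plus lower-order contributions from $f^{(0)},\dots,f^{(m-1)}$. Applying Leibniz directly to $2H=(1+f_y^2)f_{xx}-2f_xf_yf_{xy}+(1+f_x^2)f_{yy}$ gives
\begin{equation*}
(2H)^{(m)} \;=\; \sum_{r=0}^{m}\binom{m}{r}\Bigl[(1+f_y^2)^{(r)} f^{(m-r)}_{xx} - 2(f_xf_y)^{(r)} f^{(m-r)}_{xy} + (1+f_x^2)^{(r)} f^{(m-r)}_{yy}\Bigr].
\end{equation*}
The $r=0$ summand directly produces the second-order part of $H_0(f^{(m)})$, and the summands with $1\le r\le m-1$ produce the second-order parts of $\binom{m}{r}H_r(f^{(m-r)})$. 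I would then further Leibniz-expand the $r=m$ summand, using $(1+f_y^2)^{(m)}=2f_y^{(0)}f_y^{(m)}+\sum_{k=1}^{m-1}\binom{m}{k}f_y^{(k)}f_y^{(m-k)}$ and analogous identities for $(f_xf_y)^{(m)}$ and $(1+f_x^2)^{(m)}$. The ``diagonal'' pieces containing $f_x^{(m)}$ or $f_y^{(m)}$ recombine into $2(f^{(0)}_{yy}f^{(0)}_x-f^{(0)}_{xy}f^{(0)}_y)f^{(m)}_x + 2(f^{(0)}_{xx}f^{(0)}_y-f^{(0)}_{xy}f^{(0)}_x)f^{(m)}_y$, which is exactly the first-order part of $H_0(f^{(m)})$ with the factor $2=2^{\delta_{r0}}$ in~\eqref{hru}. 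The remaining ``cross'' pieces (indexed by $1\le k\le m-1$) symmetrize via $\binom{m}{k}=\binom{m}{m-k}$ to match the first-order contributions of $\sum_{r=1}^{m-1}\binom{m}{r}H_r(f^{(m-r)})$, explaining the coefficient $1=2^{\delta_{r0}}$ when $r\ge 1$. Altogether this gives the identity $2H^{(m)}=H_0(f^{(m)})+\sum_{r=1}^{m-1}\binom{m}{r}H_r(f^{(m-r)})$.

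Substituting this identity into~\eqref{H_m} (with the ``$+$'' sign) yields precisely~\eqref{eq-elliptic}, and the argument is reversible, which settles the ``if and only if'' in both directions. The sanity check at $m=1$ is that~\eqref{eq-elliptic} reduces to $H_0(f^{(1)})=\sqrt{-K^{(0)}}$, which correctly states $2H^{(1)}=\sqrt{-K^{(0)}}$ as required by the ``$+$''-sign normalization. I expect the main obstacle to be the combinatorial bookkeeping in the middle paragraph: verifying that the cross terms from the $r=m$ summand symmetrize to exactly the first-order part of $H_r$ for $1\le r\le m-1$, so that the factor $2^{\delta_{r0}}$ in~\eqref{hru} is forced by the structure of the Leibniz expansion rather than being an arbitrary normalization. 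Once this identity is secured, the rest is the same chain of reasoning as in the isotropic Lemma~\ref{l-poisson}.
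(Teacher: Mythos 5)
Your proposal is correct and follows the paper's proof essentially step by step: reduce~\eqref{eq-euclidean-crpc} to $4H^2=-t^2K$ with $H^{(0)}=0$ and the $+$-sign normalization, invoke Lemma~\ref{der-mean} pointwise, and then establish the identity $2H^{(m)}=H_0(f^{(m)})+\sum_{r=1}^{m-1}\binom{m}{r}H_r(f^{(m-r)})$ by the same two-stage Leibniz expansion (splitting off $r=0$, $1\le r\le m-1$, and then expanding the $r=m$ summand into diagonal pieces containing $f^{(m)}$ and symmetrized cross pieces). You also correctly identify the origin of the $2^{\delta_{r0}}$ factor as arising from this expansion of the $r=m$ term rather than being an arbitrary convention.
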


Beware that throughout this section, by slight abuse of notation, $H$ and $K$ denote \emph{normalized} mean and Gaussian curvatures (by a suitable power of $1+f_x^2 + f_y^2$) rather than the curvatures themselves. 
\begin{proof}
Since series~\eqref{eq-expansion} $C^2$-converges, 
it can be termwise differentiated with respect to $x$ and $y$ two times.
Hence $H$ and $K$ are real analytic in $t$ 
for each $(x,y)\in\Omega$.
Moreover, \eqref{eq-euclidean-crpc} is
equivalent to $2H=t\sqrt{-K}$. 
By Lemma~\ref{der-mean}, ~\eqref{eq-euclidean-crpc} is equivalent to $H^{(0)}=0$  and~\eqref{H_m}, 
with the `$+$' sign in $\pm$
because $2H^{(1)}=\sqrt{-K^{(0)}}$. 
It remains to prove that 
\begin{equation}\label{Hm1}
   2 H^{(m)} = H_{0}\left(f^{(m)}\right) + \sum_{r=1}^{m-1}\binom{m}{r}H_{r}\left(f^{(m-r)}\right) \qquad \text{for each } m=1,2,\dots.
\end{equation}

Indeed, by the Leibniz rule, we get
\begin{align*}
2H^{(m)} &= \sum_{r=0}^{m}\binom{m}{r}\left[\left(1+f_y^2\right)^{(r)}f_{xx}^{(m-r)}-2\left(f_xf_y\right)^{(r)}f_{xx}^{(m-r)}+\left(1+f_x^2\right)^{(r)}f_{yy}^{(m-r)}\right] \\
&=\left(1+f_y^2\right)^{(0)}f_{xx}^{(m)}-2\left(f_xf_y\right)^{(0)}f_{xy}^{(m)}
+\left(1+f_x^2\right)^{(0)}f_{yy}^{(m)}  + \\
&\quad\sum_{r=1}^{m-1}\binom{m}{r}\left[\left(1+f_y^2\right)^{(r)}f_{xx}^{(m-r)}-2\left(f_xf_y\right)^{(r)}f_{xy}^{(m-r)}+\left(1+f_x^2\right)^{(r)}f_{yy}^{(m-r)}\right] +\\
&\quad\left(1+f_y^2\right)^{(m)}f_{xx}^{(0)}-2\left(f_xf_y\right)^{(m)}f_{xy}^{(0)}+\left(1+f_x^2\right)^{(m)}f_{yy}^{(0)}\\
&=\left(1+f_y^2\right)^{(0)}f_{xx}^{(m)}-2 \left(f_xf_y\right)^{(0)}f_{xy}^{(m)}+\left(1+f_x^2\right)^{(0)}f_{yy}^{(m)}\\
&\quad +\sum_{r=1}^{m-1}\binom{m}{r}\left[\left(1+f_y^2\right)^{(r)}f_{xx}^{(m-r)}-
2\left(f_xf_y\right)^{(r)}f_{xy}^{(m-r)}+\left(1+f_x^2\right)^{(r)}f_{yy}^{(m-r)}\right.\\
&\quad +\left.\left(f_{yy}^{(0)}f_x^{(r)} -f_{xy}^{(0)}f_y^{(r)}\right) f_{x}^{(m-r)}
+\left(f_{xx}^{(0)}f_y^{(r)} -f_{xy}^{(0)}f_x^{(r)}\right) f_{y}^{(m-r)}\right] \\
&\quad +2 \left[f_{y}^{(m)}f_{y}^{(0)}f_{xx}^{(0)}-f_{x}^{(m)}f_{y}^{(0)}f_{xy}^{(0)} - f_{y}^{(m)}f_{x}^{(0)}f_{xy}^{(0)} + f_{x}^{(m)}f_{x}^{(0)}f_{yy}^{(0)}\right] \\
&= H_{0}\left(f^{(m)}\right) + \sum_{r=1}^{m-1}\binom{m}{r}H_{r}\left(f^{(m-r)}\right).\\[-1.1cm]
\end{align*}
\end{proof}

We need the following particular case of Schauder's existence theorem and 
estimates. 

\begin{theorem}[Schauder's existence theorem and estimates] \label{thm-schauder-existance} \textup{(See \cite[Theorem~7.3 and Remark~2 after it]{Agmon-etal-59} and~\cite[Theorems~6.6, 6.14, and end of \S6.3]
{Gilbarg-Trudinger-83})}
Let $C^{1/2}$ functions $a_1, a_2, a_{11}, a_{12}, a_{22}\colon\overline{\Omega} \rightarrow  \mathbb{R}$ satisfy 
\begin{equation}\label{stricteliptic}
    a_{11}(\mathbf{x})\xi_1^2 + a_{12}(\mathbf{x})\xi_{1} \xi_2 + a_{22}(\mathbf{x})\xi_2^2\ge \lambda \cdot (\xi_1^2+\xi_2^2)
\end{equation}
for some $\lambda>0$ and all 
$\xi_1, \xi_2 \in \mathbb{R}, \mathbf{x} \in \overline{\Omega}$.
Then for each $C^{1/2}$ function $g\colon\overline{\Omega} \rightarrow \mathbb{R}$, there exists a unique $C^{2 + 1/2}$ function $f\colon\overline{\Omega} \rightarrow \mathbb{R}$ that vanishes on $\partial \Omega$ and satisfies 
$$ a_1(\mathbf{x})f_x(\mathbf{x}) + a_2(\mathbf{x})f_y(\mathbf{x}) + a_{11}(\mathbf{x}) f_{xx}(\mathbf{x}) + a_{12}(\mathbf{x})f_{xy}(\mathbf{x})  +a_{22}(\mathbf{x}) f_{yy}(\mathbf{x}) = g(\mathbf{x}) \qquad \text{ for each } \mathbf{x}\in {\Omega}.$$
Moreover, there is a constant 
$C(\Omega,a_1, a_2, a_{11}, a_{12}, a_{22})>0$
depending on $\Omega,a_1, a_2, a_{11}, a_{12}, a_{22}$ 
such that 
$$\|f\|_{C^{2 +1/2}} \leq C(\Omega,a_1, a_2, a_{11}, a_{12}, a_{22})\cdot \|g\|_{C^{1/2}}.$$
\end{theorem}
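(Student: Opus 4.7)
The plan is to derive the theorem from Theorem~\ref{thm-schauder} (Schauder's estimates for the Laplacian) in two stages: first establish a global a priori Schauder estimate for the general operator $L := a_1\partial_x + a_2\partial_y + a_{11}\partial_{xx} + a_{12}\partial_{xy} + a_{22}\partial_{yy}$, and then invoke the method of continuity for existence. Uniqueness follows from the a priori estimate applied to the difference of two solutions with the same right-hand side.

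For the a priori estimate $\|f\|_{C^{2+1/2}} \le C\,\|Lf\|_{C^{1/2}}$ for $C^{2+1/2}$ functions $f$ vanishing on $\partial\Omega$, I would use the classical freezing-of-coefficients technique. On a small ball $B_r(\mathbf{x}_0)\subset\Omega$, write $Lf = L_0 f + (L-L_0)f$, where $L_0$ is $L$ with coefficients frozen at $\mathbf{x}_0$. An affine change of variables, possible because $L_0$ has constant coefficients and is uniformly elliptic with constant $\lambda$, turns $L_0$ into the Laplacian, so an interior analog of Theorem~\ref{thm-schauder} gives a Schauder estimate for $L_0 f = Lf - (L-L_0)f$. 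The H\"older continuity of the coefficients makes $\|(L-L_0)f\|_{C^{1/2}(B_r)}$ arbitrarily small relative to $\|f\|_{C^{2+1/2}(B_r)}$ for small $r$, so the bad term can be absorbed into the left-hand side. Near the boundary I would flatten $\partial\Omega$ locally by an analytic diffeomorphism (available since $\partial\Omega$ is analytic), reduce to a Dirichlet problem on a half-disk via odd reflection, and invoke Theorem~\ref{thm-schauder} after the same freezing step. A partition of unity assembles the local estimates into the global one.

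Next, for existence I would run the method of continuity on the family $L_s := (1-s)\Delta + sL$, $s\in[0,1]$. Each $L_s$ is uniformly elliptic with ellipticity constant at least $\min(1,\lambda)$ and has $C^{1/2}$ coefficients bounded in norm uniformly in $s$; hence the a priori estimate from the preceding paragraph applies with a constant $C$ independent of $s$. Let $S\subset[0,1]$ be the set of $s$ for which the Dirichlet problem $L_s f = g$, $f|_{\partial\Omega} = 0$ is solvable for every $g\in C^{1/2}(\overline{\Omega})$. Then $0\in S$ by Theorem~\ref{thm-schauder}. Openness of $S$ follows by a Neumann-series argument in the Banach space $\{f\in C^{2+1/2}(\overline\Omega):f|_{\partial\Omega}=0\}$, using that the uniform estimate makes $L_s$ an isomorphism onto $C^{1/2}(\overline\Omega)$ whenever $s\in S$. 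Closedness follows from the uniform a priori estimate and the Arzel\`a--Ascoli theorem: if $s_k\to s$ with $L_{s_k}f_k = g$, the sequence $f_k$ is $C^{2+1/2}$-bounded, and a convergent subsequence yields a solution at $s$. Hence $S=[0,1]$, and $s=1$ gives the claim.

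The main obstacle is the boundary Schauder estimate: the flattening diffeomorphism must be chosen so that the transformed coefficients remain $C^{1/2}$ and uniformly elliptic, the reflection across the flat boundary has to preserve both properties, and the bookkeeping of how norms transform under the change of variables must be done carefully. Everything else is then a routine combination of the freezing technique and the method of continuity.
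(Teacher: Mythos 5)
The paper does not prove Theorem~\ref{thm-schauder-existance}. It is stated as a quoted classical result, with citations to Agmon--Douglis--Nirenberg and to Gilbarg--Trudinger (Theorems~6.6, 6.14, end of \S6.3), so there is no in-paper argument to compare against. Your sketch is essentially the standard proof recorded in the cited Gilbarg--Trudinger chapter: freezing of coefficients and boundary flattening to obtain global Schauder estimates, then the method of continuity connecting $L$ to $\Delta$, with uniqueness following from the a~priori bound. As an outline of that textbook argument your plan is sound.

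One genuine gap remains. The freezing-and-absorption argument, carried out correctly, yields an estimate of the form $\|f\|_{C^{2+1/2}} \le C\bigl(\|Lf\|_{C^{1/2}} + \|f\|_{C^{0}}\bigr)$, not the clean bound $\|f\|_{C^{2+1/2}} \le C\,\|Lf\|_{C^{1/2}}$ you write down. Absorption removes only the terms that are small multiples of $\|f\|_{C^{2+1/2}}$; it does not dispose of the low-order term. To drop $\|f\|_{C^{0}}$ you need the maximum principle, which applies here because $L$ has no zeroth-order coefficient, $f$ vanishes on $\partial\Omega$, and $L$ is uniformly elliptic; this gives $\|f\|_{C^{0}} \le C'\|Lf\|_{C^{0}}$. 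Without this step the a~priori estimate in the form you state does not follow, and both the uniqueness argument (difference of two solutions solves $Lf=0$) and the closedness step in the method of continuity would fail. This is exactly the role of Theorem~3.7 in Gilbarg--Trudinger, used in conjunction with Theorem~6.6, so it is implicitly part of what the paper cites; it should be made explicit in your sketch.
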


\begin{corollary}\label{cor-minimal-shauder}
For any $C^{2+1/2}$ function $f^{(0)}\colon\overline\Omega\to\mathbb{R}$ and $C^{1/2}$ function $g:\overline{\Omega} \rightarrow \mathbb{R}$, there exists a unique $C^{2 + 1/2}$ function $u\colon\overline{\Omega} \rightarrow \mathbb{R}$ that vanishes on $\partial \Omega$ and satisfies $H_0(u)=g$, under notation~\eqref{hru}. Moreover, there exists a constant 
$C\left(\Omega, f^{(0)}\right)>0$ depending on $\Omega$ and $f^{(0)}$ such that 
$$\|u\|_{C^{2 +1/2}} \leq C\left(\Omega,f^{(0)}\right)\|g\|_{C^{1/2}}.$$
\end{corollary}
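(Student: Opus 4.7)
The plan is to apply Theorem~\ref{thm-schauder-existance} directly to $H_0$ viewed as a linear second-order operator with coefficients determined by $f^{(0)}$. Reading off~\eqref{hru} at $r=0$, the operator $H_0(u)$ has no zero-th order term and coefficients
\[
a_{11}=1+(f^{(0)}_y)^2,\quad a_{12}=-2f^{(0)}_xf^{(0)}_y,\quad a_{22}=1+(f^{(0)}_x)^2,
\]
\[
a_1=2\bigl(f^{(0)}_{yy}f^{(0)}_x-f^{(0)}_{xy}f^{(0)}_y\bigr),\quad a_2=2\bigl(f^{(0)}_{xx}f^{(0)}_y-f^{(0)}_{xy}f^{(0)}_x\bigr).
\]

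The first step is to verify that these coefficients lie in $C^{1/2}(\overline{\Omega})$. Since $f^{(0)}\in C^{2+1/2}(\overline{\Omega})$, each of $f^{(0)}_x,f^{(0)}_y,f^{(0)}_{xx},f^{(0)}_{xy},f^{(0)}_{yy}$ belongs to $C^{1/2}(\overline{\Omega})$, and by the submultiplicativity of the H\"older norm (already invoked in the proof of Lemma~\ref{l-one-step}) arbitrary sums and products of such functions remain in $C^{1/2}(\overline{\Omega})$, with norms controlled by $\|f^{(0)}\|_{C^{2+1/2}}$.

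The key step is the uniform ellipticity check~\eqref{stricteliptic}. A direct expansion gives
\[
a_{11}\xi_1^2+a_{12}\xi_1\xi_2+a_{22}\xi_2^2=\xi_1^2+\xi_2^2+\bigl(f^{(0)}_y\xi_1-f^{(0)}_x\xi_2\bigr)^2\ge \xi_1^2+\xi_2^2,
\]
so~\eqref{stricteliptic} holds with $\lambda=1$, uniformly in $f^{(0)}$; this reflects the fact that the principal symbol of $H_0$ is proportional to the inverse first fundamental form of the graph $z=f^{(0)}$. With both hypotheses of Theorem~\ref{thm-schauder-existance} verified, that theorem delivers a unique $C^{2+1/2}$ solution $u$ vanishing on $\partial\Omega$, together with the bound $\|u\|_{C^{2+1/2}}\le C(\Omega,a_1,a_2,a_{11},a_{12},a_{22})\,\|g\|_{C^{1/2}}$; since all the $a_i,a_{ij}$ are built from $f^{(0)}$, the constant depends only on $\Omega$ and $f^{(0)}$.

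There is essentially no obstacle to overcome: the corollary is a packaging of Schauder's theorem for the specific linearization that will drive the Euclidean analogue of Lemma~\ref{l-one-step}. The only point requiring a brief computation is the ellipticity identity above, and the pleasant observation is that $\lambda=1$ can be taken independently of $f^{(0)}$, so that only the norm $\|f^{(0)}\|_{C^{2+1/2}}$ (through the $C^{1/2}$ bounds on the coefficients) enters the Schauder constant.
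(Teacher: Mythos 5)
Your proof is correct and takes essentially the same approach as the paper: read off the $C^{1/2}$ coefficients of the linear operator $H_0$ from~\eqref{hru} at $r=0$, verify uniform ellipticity via the identity $a_{11}\xi_1^2+a_{12}\xi_1\xi_2+a_{22}\xi_2^2=\xi_1^2+\xi_2^2+\bigl(f^{(0)}_y\xi_1-f^{(0)}_x\xi_2\bigr)^2\ge\xi_1^2+\xi_2^2$, and then apply Theorem~\ref{thm-schauder-existance}. The paper's proof is terser but identical in substance; your added remarks (the geometric interpretation of the principal symbol and the observation that $\lambda=1$ is independent of $f^{(0)}$) are correct and harmless.
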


\begin{proof} 
    The coefficients $a_i$ and $a_{ij}$ of the operator $H_0(u)$ are $C^{1/2}$ functions as products of $C^{1/2}$ functions.
    The operator $H_0(u)$ 
    satisfies~\eqref{stricteliptic} 
    because
$$
\left(1+f_y^2\right)^{(0)}\xi_1^2-2 \left(f_xf_y\right)^{(0)}\xi_1\xi_2 +\left(1+f_x^2\right)^{(0)}\xi_2^2
=
\xi_1^2 +\xi_2^2 +   \left(f^{(0)}_y\xi_1 - f^{(0)}_x\xi_2\right)^2
\ge
\xi_1^2 +\xi_2^2. 
$$ 
%
By Theorem~\ref{thm-schauder-existance}, the corollary follows.
\end{proof}

\begin{corollary}\label{cor-uniqueness-euc}
Under the assumptions of Theorem~\ref{thm-euclidean}, there is no more than one (up to restriction) real analytic family of surfaces $\Phi^s$ with the ratio $s-1$ of principal curvatures and $\partial \Phi^s=\partial \Phi^0$.
\end{corollary}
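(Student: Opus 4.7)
The plan is to mirror the argument for the isotropic analog, Corollary~\ref{cor-uniqueness}, substituting Lemma~\ref{l-euc} for Lemma~\ref{l-poisson} and Corollary~\ref{cor-minimal-shauder} for Schauder's estimates on the Poisson equation. A new ingredient, relative to the isotropic case, is that the statement is phrased in terms of surfaces rather than functions, so I first reduce to the graph case.

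Since $\Phi^0$ is the graph of a real analytic function $f^{(0)}\colon\overline{\Omega}\to\mathbb{R}$ with no flat points, and the hypothetical family $\Phi^s$ is real analytic with $\Phi^0$ at $s=0$, the implicit function theorem together with compactness of $\overline{\Omega}$ ensures that, after restricting the parameter interval, $\Phi^s$ is the graph over $\Omega$ of a jointly real analytic function $f(\cdot,\cdot,s)$. The hypothesis $\partial\Phi^s=\partial\Phi^0$ then translates to $f(x,y,s)=f^{(0)}(x,y)$ for $(x,y)\in\partial\Omega$. The condition that the ratio of principal curvatures equals $s-1$ is $H^2/K=s^2/(4(s-1))$, i.e.\ equation~\eqref{eq-euclidean-crpc} with $t=s/\sqrt{1-s}$; since this reparametrization is a real analytic diffeomorphism near $s=0$, working with parameter $t$ is equivalent.

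Next I would prove by induction on $m\ge 0$ that the Taylor coefficient $f^{(m)}(x,y)$ in~\eqref{eq-expansion} at $t=0$ is uniquely determined by $f^{(0)}$. The base case is immediate. For the inductive step, assume $f^{(0)},\dots,f^{(m-1)}$ are fixed. Joint real analyticity of $f$ together with compactness yields $C^2$-convergence of~\eqref{eq-expansion} for small $|t|$; Remark~\ref{rem-nonvanishing-det} combined with the no-flat-points hypothesis gives $f^{(0)}_{xx}f^{(0)}_{yy}-(f^{(0)}_{xy})^2<0$ throughout $\overline{\Omega}$. Lemma~\ref{l-euc} then applies and delivers~\eqref{eq-elliptic}, a linear elliptic equation $H_0(f^{(m)})=g_m$ whose right-hand side depends only on $f^{(0)},\dots,f^{(m-1)}$. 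Because $f(x,y,t)$ equals $f^{(0)}$ on $\partial\Omega$ for every $t$, the function $f^{(m)}$ vanishes on $\partial\Omega$, and Corollary~\ref{cor-minimal-shauder} supplies uniqueness of the Dirichlet problem, pinning down $f^{(m)}$.

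Two real analytic families agreeing in Taylor series at $t=0$ coincide in a neighborhood of $t=0$, which is exactly the asserted uniqueness up to restriction. The step I anticipate as the main obstacle is the reduction to the graph case together with the bookkeeping of the reparametrization $s\mapsto t(s)$; once these are carried out, the rest is a transparent Euclidean counterpart of the proof of Corollary~\ref{cor-uniqueness}.
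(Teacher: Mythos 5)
Your proposal is correct and follows essentially the same route as the paper: reduce to graphs, reparametrize $s\mapsto t=s/\sqrt{1-s}$, then prove uniqueness of the Taylor coefficients $f^{(m)}$ by induction using Lemma~\ref{l-euc} and the Dirichlet uniqueness from Corollary~\ref{cor-minimal-shauder}. The one place where the paper is slightly more explicit is the reduction to the graph case: it first argues that the projection of $\Phi^s$ to the $xy$-plane remains an embedding with the same image for small $|s|$ (via the stability of $C^1$ embeddings under $C^1$ perturbation, citing Munkres, together with $\partial\Phi^s=\partial\Phi^0$) before invoking the inverse function theorem, whereas you compress this into a reference to the implicit function theorem and compactness; your sketch would need to spell out why the domain of the graph stays exactly $\Omega$, but the idea is the same.
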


\begin{proof} 
Since $\Phi^0$ is the graph of a real analytic function, it follows that $\Phi^s$ is, for sufficiently small $|s|$. Indeed, consider the composition of an injective regular real analytic parametrization $D^2\to \Phi^s$ and the projection 
to $xy$-plane. For $s=0$, it is a $C^1$ embedding. Since a $C^1$ approximation of an embedding is again an embedding (see \cite[Theorem~3.10]{munkres-1966}), it follows that the composition is injective and has a nondegenerate differential for all sufficiently small $|s|$. Since $\partial\Phi^s=\partial\Phi^0$, it follows that the composition has the same image for all such~$s$.

By the inverse function theorem, the family $\Phi^s$ is graphs of a real analytic family of functions $f\colon\overline\Omega\times [-\varepsilon;\varepsilon]\to\mathbb{R}$ for some $\Omega$ and $\varepsilon>0$. The functions 
satisfy~\eqref{eq-euclidean-crpc}, where $t=s/\sqrt{1-s}$,
and $f(x,y,t)=f^{(0)}(x,y)$ for $t=0$ or $(x,y)\in\partial\Omega$. Hereafter, $f^{(m)}$ is the $m$-th derivative of $f(x,y,t)$ with respect to $t$ evaluated at $t=0$. 

It remains to prove that $f^{(m)}$ is uniquely determined by $f^{(0)}$ for 
$m\ge 1$, so is 
$f(x,y,t)=\sum_{m=1}^\infty\frac{t^m}{m!}\,f^{(m)}(x,y)$. We prove it by induction on $m$. 
Assume that 
$f^{(0)},\dots,f^{(m-1)}$ have already been determined. Since $f(x,y,t)=f^{(0)}(x,y)$ on 
$\partial\Omega$, it follows that $f^{(m)}(x,y)=0$ on $\partial\Omega$. 
Since the graph of $f^{(0)}$ is a minimal surface without flat points, by Remark~\ref{rem-nonvanishing-det} it follows that $f^{(0)}_{xx}f^{(0)}_{yy}-(f^{(0)}_{xy})^2<0$.
By compactness, series~\eqref{eq-expansion} $C^2$-converges for sufficiently small $t$, because the Taylor series of the derivatives of $f(x,y,t)$ converge uniformly in a neighborhood of each point of $\overline\Omega\times\{0\}$.
Thus, by Lemma~\ref{l-euc}, we get~\eqref{eq-elliptic}.
This is a linear PDE in $f^{(m)}$ with the right side depending only on $f^{(0)},\dots,f^{(m-1)}$. By 
Corollary~\ref{cor-minimal-shauder}, this equation with the zero Dirichlet boundary condition has no more than one solution. Thus, $f^{(m)}$ is uniquely determined. By induction, the corollary follows.
\end{proof}


For the existence of the family $f(x,y,t)$, 
we need
a recursive bound for $\|f^{(m)}\|:=\|f^{(m)}\|_{C^{2+1/2}}$.

\begin{lemma} \label{l-one-step-euc} 
Starting with a $C^{2+1/2}$ function $f^{(0)}\colon\overline\Omega\to\mathbb{R}$,
which has 
a non-vanishing Hessian and 
$H^{(0)}=0$, 
define a sequence of $C^{2+1/2}$ functions $f^{(m)}\colon\overline\Omega\to\mathbb{R}$, where $m\ge 1$, recursively as unique solutions of~\eqref{eq-elliptic} vanishing on $\partial\Omega$. 
Then 
there is a constant 
$C\left(\Omega,f^{(0)}\right)>0$ such that 
\begin{multline*}
\left\|f^{(m)}\right\| \le  C\left(\Omega,f^{(0)}\right)\cdot 
 \left(\sum_{0\le p_1,p_2,p_3 <m \atop \sum p_i = m}^{}\binom{m}{p_1,p_2, p_3 }\prod_{i=1}^{3}\left\|f^{(p_i)}\right\| + \right.\\ \left. m\sum_{q_1, \cdots, q_4 \ge 0 \atop \sum q_i = m-1}^{}\binom{m-1}{q_1,\dots, q_4}\prod_{i=1}^{4}\left\|f^{(q_i)}\right\| + \frac{1}{m+1}\sum_{0\le r_1,\dots,r_6 <m\atop \sum r_i = m+1 }^{}\binom{m+1}{r_1,\dots, r_6 }\prod_{i=1}^{6}\left\|f^{(r_i)}\right\|\right)
 \qquad\text{for $m \ge 1$.}
\end{multline*}
\end{lemma}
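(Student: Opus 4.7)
The strategy parallels Lemma~\ref{l-one-step}, replacing the Poisson estimate by the elliptic estimate of Corollary~\ref{cor-minimal-shauder}. I read~\eqref{eq-elliptic} as the linear PDE $H_0(f^{(m)})=g_m$ in the unknown $f^{(m)}$, with right-hand side $g_m$ equal to the remaining terms. Since $f^{(m)}|_{\partial\Omega}=0$, Corollary~\ref{cor-minimal-shauder} gives
\[
  \bigl\|f^{(m)}\bigr\|\le C\bigl(\Omega,f^{(0)}\bigr)\,\|g_m\|_{C^{1/2}},
\]
so it suffices to bound $\|g_m\|_{C^{1/2}}$ by the claimed right-hand side. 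For that I use three standing tools as in the isotropic proof: submultiplicativity $\|fg\|_{C^{1/2}}\le C(\Omega)\|f\|_{C^{1/2}}\|g\|_{C^{1/2}}$; the five inequalities $\|f^{(j)}_{x}\|_{C^{1/2}},\dots,\|f^{(j)}_{yy}\|_{C^{1/2}}\le \|f^{(j)}\|$; and the bound $\bigl\|1/\sqrt{-K^{(0)}}\bigr\|_{C^{1/2}}\le \mathrm{const}(\Omega,f^{(0)})$, which holds because Remark~\ref{rem-nonvanishing-det} and compactness force $K^{(0)}<0$ on $\overline\Omega$ with a uniform bound away from $0$. Every constant so produced is absorbed into $C(\Omega,f^{(0)})$.

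I then treat each of the three contributions to $g_m$ separately. \emph{First}, for $-\sum_{r=1}^{m-1}\binom{m}{r}H_r(f^{(m-r)})$, I expand the coefficients $(1+f_y^2)^{(r)}=\sum_{s}\binom{r}{s}f_y^{(s)}f_y^{(r-s)}$, $(f_xf_y)^{(r)}=\sum_{s}\binom{r}{s}f_x^{(s)}f_y^{(r-s)}$, $(1+f_x^2)^{(r)}=\sum_{s}\binom{r}{s}f_x^{(s)}f_x^{(r-s)}$ and multiply by $f^{(m-r)}_{xx}$, $f^{(m-r)}_{xy}$, $f^{(m-r)}_{yy}$ (together with the first-order terms of~\eqref{hru}, in which one factor is a derivative of $f^{(0)}$). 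Every resulting monomial is a product of \emph{three} factors $f^{(p_1)}_{\ast}f^{(p_2)}_{\ast}f^{(p_3)}_{\ast}$ with $p_1+p_2+p_3=m$ and each $p_i\le m-1$ (since $r\in[1,m-1]$). The identity $\binom{m}{r}\binom{r}{s}=\binom{m}{s,\,r-s,\,m-r}$ turns the weights into the trinomials $\binom{m}{p_1,p_2,p_3}$, yielding the first sum. \emph{Second}, for $-mK^{(m-1)}/(2^{1-\delta_{m1}}\sqrt{-K^{(0)}})$, I expand the normalized $K$ in~\eqref{H_and_K_euc} as $-f_{xy}^2+f_{xx}f_{yy}-f_x^2f_{xy}^2+f_x^2f_{xx}f_{yy}-f_y^2f_{xy}^2+f_y^2f_{xx}f_{yy}$. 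The quartic monomials produce $\binom{m-1}{q_1,\dots,q_4}\prod\|f^{(q_i)}\|$ directly after Leibniz, while the quadratic monomials $-f_{xy}^2+f_{xx}f_{yy}$ are subsumed by padding with two copies $f^{(0)}\!\cdot f^{(0)}$; the cost $\|f^{(0)}\|^{-2}$ is finite (nonvanishing Hessian gives $\|f^{(0)}\|>0$) and is absorbed into $C(\Omega,f^{(0)})$. \emph{Third}, for $-\frac{4}{(m+1)\,2^{1-\delta_{m1}}\sqrt{-K^{(0)}}}\sum_{r=2}^{m-1}\binom{m+1}{r}H^{(r)}H^{(m-r+1)}$, the normalized $H$ in~\eqref{H_and_K_euc} is a sum of $1$-factor monomials ($f_{xx},f_{yy}$) and $3$-factor monomials ($f_y^2f_{xx}$, $f_xf_yf_{xy}$, $f_x^2f_{yy}$). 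Leibniz writes $H^{(r)}$ as a sum over ordered partitions of $r$ into at most $3$ nonnegative parts, so $H^{(r)}H^{(m-r+1)}$ has up to $6$ factors. The telescoping identity $\binom{m+1}{r}\binom{r}{s_1,s_2,s_3}\binom{m-r+1}{s_4,s_5,s_6}=\binom{m+1}{s_1,\dots,s_6}$ collapses the weight to the hexanomial $\binom{m+1}{r_1,\dots,r_6}$, with each $r_i\le m-1<m$ because $r,m-r+1\in[2,m-1]$; lower-factor monomials are again padded with copies of $f^{(0)}$.

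The only genuine obstacle is bookkeeping: verifying that the Leibniz multinomials combined with the front binomials $\binom{m}{r}$ and $\binom{m+1}{r}$ really collapse to the clean multinomials $\binom{m}{p_1,p_2,p_3}$, $\binom{m-1}{q_1,\dots,q_4}$, $\binom{m+1}{r_1,\dots,r_6}$ claimed, and that the index ranges match the strict bounds $p_i<m$, $r_i<m$. Once this is confirmed, adding the three bounds and gathering all absolute constants — including the finitely many powers of $C(\Omega)$ arising from repeated submultiplicativity and the factors involving $\|f^{(0)}\|$ and $\|1/\sqrt{-K^{(0)}}\|$ — into a single $C(\Omega,f^{(0)})$ yields the stated inequality.
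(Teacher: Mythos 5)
Your proposal follows the paper's proof essentially step for step: reduce via Corollary~\ref{cor-minimal-shauder} to bounding $\|g_m\|_{C^{1/2}}$, then split $g_m$ into the three contributions, use the Leibniz rule and submultiplicativity of the H\"older norm, absorb lower-degree monomials by padding with copies of $f^{(0)}$ at the cost of a power of $\|f^{(0)}\|$ folded into the constant, and collapse the weights by the multinomial identities $\binom{m}{r}\binom{r}{s}=\binom{m}{s,r-s,m-r}$ and $\binom{m+1}{r}\binom{r}{s_1,s_2,s_3}\binom{m+1-r}{s_4,s_5,s_6}=\binom{m+1}{s_1,\dots,s_6}$. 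Your index-range checks ($p_i<m$ because $r\in[1,m-1]$; $r_i<m$ because $r,m-r+1\in[2,m-1]$; the $q_i$ automatically $\le m-1$) match the paper, which itself notes that $C_4,C_5$ depend on $\|f^{(0)}\|$ precisely because of the padding. This is the same argument, just with the bookkeeping spelled out slightly more explicitly.
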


\begin{proof}
Denote $\left\|f\right\|{}':=\left\|f\right\|_{C^{1/2}}$. 
By Remark~\ref{rem-nonvanishing-det}, $K^{(0)}<-\varepsilon^2$ in $\overline{\Omega}$ for some $\varepsilon>0$ so that $\left\|{1}/{\sqrt{-K^{(0)}}}\right\|'<\infty$.
By Schauder's estimates (Corollary~\ref{cor-minimal-shauder}) and the submultiplicativity of the H\"older norm, we get, for some $C_1$ and $C_2$,
\begin{align*}
\textstyle \left\|f^{(m)}\right\| 
&\le  C_1\left(\Omega,f^{(0)}\right)\left\|-\sum_{r=1}^{m-1}\binom{m}{r}H_{r}(f^{(m-r)}) + \frac{-mK^{(m-1)}-\frac{4}{m+1}\sum_{r=2}^{m-1}\binom{m+1}{r}
H^{(r)}H^{(m-r+1)}}{2^{1-\delta_{m1}}\sqrt{-K^{(0)}}}\right\|' \\
&\le C_{2}\left(\Omega, f^{(0)}\right)
\left(\sum_{r=1}^{m-1}\binom{m}{r}\left\|H_{r}(f^{(m-r)})\right\|'\right. +m\left\|K^{(m-1)}\right\|'  
\left.\textstyle+\frac{1}{m+1}\sum_{r=2}^{m-1}\binom{m+1}{r}
\left\| H^{(r)}H^{(m-r+1)}\right\|'\right).
\end{align*}

Let us bound the first summand on the right side. 
For $0 < r < m $, by 
the Leibniz rule, we get, for some $C_3$,
\begin{align*}
    \left\|H_{r}(f^{(m-r)})\right\|' & \le C_3(\Omega)\left(\left\|\left(f_y^2\right)^{(r)}\right\|'\left\|f_{xx}^{(m-r)}\right\|'+2\left\|\left(f_xf_y\right)^{(r)}\right\|'\left\|f_{xy}^{(m-r)}\right\|'+\left\|\left(f_x^2\right)^{(r)}\right\|'\left\|f_{yy}^{(m-r)} \right\|'+\right. \\
     &\quad \textstyle \left.\left(  \left\| f_{yy}^{(0)}\right\|'\left\| f_x^{(r)}\right\|'+ \left\| f_{xy}^{(0)}\right\|'\left\| f_y^{(r)} \right\|'\right) \left\|f_{x}^{(m-r)}\right\|'+\left(  \left\| f_{xx}^{(0)}\right\|'\left\| f_y^{(r)}\right\|'+ \left\| f_{xy}^{(0)}\right\|'\left\| f_x^{(r)} \right\|'\right) \left\|f_{y}^{(m-r)}\right\|'\right) \\
     & \le 4C_3(\Omega)\left\|f^{(m-r)}\right\| \cdot \sum_{i=0}^{r}\binom{r}{i}\left\|f^{(i)} \right\| \left\|f^{(r-i)} \right\| + 4C_3(\Omega)\left\|f^{(0)}\right\| \left\|f^{(r)}\right\| \left\|f^{(m-r)}\right\|\\
     & = 4C_3(\Omega)\sum_{0\le p_1,p_2,p_3 <m \atop  p_1 = m-r, \sum p_i = m}^{}\binom{r}{p_2}\prod_{i=1}^{3}\left\|f^{(p_i)}\right\| + 4C_3(\Omega)\sum_{0\le p_1,p_2,p_3 <m \atop  p_1 = m-r, p_2 =r, \sum p_i = m}^{}\binom{r}{p_2 }\prod_{i=1}^{3}\left\|f^{(p_i)}\right\|.
\end{align*}
Thus
\begin{equation*}
\sum_{r=1}^{m-1}\binom{m}{r}\left\|H_{r}(f^{(m-r)})\right\|'\le 8C_3(\Omega)  \sum_{0\le p_1,p_2,p_3 <m \atop \sum p_i = m}^{}\binom{m}{p_1,p_2, p_3 }\prod_{i=1}^{3}\left\|f^{(p_i)}\right\|.
\end{equation*}
Analogously, we bound the second and third summands 
for suitable $C_4\left(\Omega,\frac{1}{\left\|f^{(0)}\right\|}\right)>0$ and $C_5\left(\Omega,\frac{1}{\left\|f^{(0)}\right\|}\right)>0$:
\begin{align*}
    \left\|K^{(m-1)}\right\|'& \le  C_4\left(\Omega,\frac{1}{\left\|f^{(0)}\right\|}\right) \sum_{q_1, \cdots, q_4 \ge 0 \atop \sum q_i = m-1}^{}\binom{m-1}{q_1,\dots, q_4}\prod_{i=1}^{4}\left\|f^{(q_i)}\right\|, \\
    \sum_{r=2}^{m-1}\binom{m+1}{r}
\left\|H^{(r)}H^{(m-r+1)}\right\|'& \le C_5\left(\Omega,\frac{1}{\left\|f^{(0)}\right\|}\right)\sum_{0\le r_1,\dots,r_6 <m\atop \sum r_i = m+1 }^{}\binom{m+1}{r_1,\dots, r_6 }\prod_{i=1}^{6}\left\|f^{(r_i)}\right\|.
\end{align*}
Here, the lower-degree terms are absorbed at the price of multiplying by a power of $\left\|f^{(0)}\right\|$; thus, $C_4$ and $C_5$ depend on $\|f^{(0)}\|$.
Combining all the resulting bounds, we complete the proof.
\end{proof}

Now we bound $\left\|f^{(m)}\right\|$ by the Taylor coefficients of a specially constructed analytic function. Abbreviate $M:=\left\|f^{(0)}\right\|,
N:=C\left(\Omega, f^{(0)}\right)$.

\begin{lemma} \label{l-reccurence-euc} Let $M,N>0$. 
Define a sequence $a^{(m)}=a^{(m)}(M,N)$ recursively by $a^{(0)}:=M$ and
\begin{multline*}
a^{(m)} := N\sum_{ 0\le p_1,p_2,p_3 <m \atop \sum p_i = m   }^{}\binom{m}{p_1,p_2, p_3 }\prod_{i=1}^{3}a^{(p_i)}+\\+
Nm\sum_{q_1,.., q_4 \ge 0 \atop \sum q_i = m-1}^{}\binom{m-1}{q_1,.., q_4}\prod_{i=1}^{4}a^{(q_i)}+ \frac{N}{m+1}\sum_{0\le r_1,\dots,r_6 <m\atop \sum r_i = m+1 }^{}\binom{m+1}{r_1,.., r_6 }\prod_{i=1}^{6}a^{(r_i)} \qquad\text{for $m\ge 1$.}
\end{multline*}
Then $\sum_{m=0}^{\infty}a^{(m)}t^m/m!$ converges
in a neighborhood of $0$, and the sum is the inverse function~of 
\begin{equation}\label{eq-t(a)}
t(a) :=\frac{(a-M)\left(p(a)+ \sqrt{p(a)^2-4 N^2 \left(a^4+15 M^{12} N^2\right)\left(a^4+2 a^3 M+3 a^2 M^2+4 a M^3+5 M^4\right) }\right)}{2 \left(a^4 N+15 M^{12} N^3\right)} \,
\end{equation}
where $p(a):=1 - (a - M) (a + 2 M) N + 30 M^8 N^2$, in a neighborhood of $a=M$.
\end{lemma}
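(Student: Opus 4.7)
The plan is to mimic Lemma~\ref{l-reccurence}. I would introduce the formal generating series $a(t):=\sum_{m\ge 0} b_m t^m$ with $b_m := a^{(m)}/m!$, derive an algebraic functional equation for $a(t)$, and identify it with the quadratic in $t$ whose branch with the $+$ sign is~\eqref{eq-t(a)}. Where the isotropic case yielded a functional equation quadratic in $a$, here the Euclidean recurrence produces one quadratic in $t$ with polynomial coefficients in $a$, so the formula for $t(a)$ is obtained directly and then inverted via the inverse function theorem.

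The first step is to read off $b_0 = M$ and $b_1 = NM^4$ from the recurrence at $m=0$ and $m=1$. Then I would translate each of the three sums into a convolution of formal series using $\binom{m}{p_1,\dots,p_k}/m! = 1/(p_1!\cdots p_k!)$. After accounting for boundary configurations with some $p_i = m$, the first sum collapses (upon summation against $Nt^m$ for $m\ge 1$) to $N(a-M)^2(a+2M)$; the second sum becomes $Nt\,a(t)^4$; and the third, after subtracting from $[t^{m+1}]a(t)^6$ the weight of configurations with some $r_i \ge m$, yields the remaining contribution. Assembling everything and multiplying by $t$ rearranges the identity into
\[
  \bigl(Na^4 + 15 N^3 M^{12}\bigr)\, t^2 - (a-M)\,p(a)\, t + N(a-M)^2 \bigl(a^4 + 2a^3 M + 3a^2 M^2 + 4aM^3 + 5M^4\bigr) = 0,
\]
with $p(a) = 1 - N(a-M)(a+2M) + 30 N^2 M^8$. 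Choosing the branch of the quadratic formula consistent with $t(M)=0$ and $t'(M)=1/(NM^4)\ne 0$ gives exactly~\eqref{eq-t(a)}. The inverse function theorem then supplies an analytic inverse $a(t)$ near $t = 0$; differentiating the quadratic successively recovers the original recurrence for its Taylor coefficients, so by uniqueness they equal $b_m$, which proves convergence of $\sum_m b_m t^m$ and identifies the sum with the inverse of~\eqref{eq-t(a)}.

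The main obstacle is the combinatorial bookkeeping for the third sum. For $m \ge 2$ the excluded weight splits cleanly into configurations with exactly one $r_i = m$ (contributing $30 M^4 b_1 b_m$) or exactly one $r_i = m+1$ (contributing $6 M^5 b_{m+1}$), since pairwise overlaps are empty. At $m = 1$, however, the constraints $r_i \ge 1,\,r_j \ge 1,\,\sum r_k = 2$ are simultaneously realizable (forcing $r_i=r_j=1$), producing an inclusion--exclusion correction of $-15 M^4 b_1^2$, which in the generating-series identity contributes an extra $15 N^3 M^{12}\, t$. This correction is precisely what produces the $15 M^{12} N^2$ pieces in the leading quadratic coefficient and in~\eqref{eq-t(a)}; without it the quadratic would reduce to the wrong expression. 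As a cross-check, computing $b_2 = 3N^3 M^9 + 4 N^2 M^7 + 20 N^4 M^{15}$ directly from the recurrence should match the $t^3$ expansion of the quadratic.
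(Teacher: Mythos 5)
Your proposal is correct and follows essentially the same route as the paper: pass to the exponential generating series $a(t)=\sum b_m t^m$ with $b_m=a^{(m)}/m!$, convert each of the three sums into a convolution with the appropriate boundary corrections, obtain the quadratic $\bigl(Na^4+15N^3M^{12}\bigr)t^2-(a-M)p(a)\,t+N(a-M)^2(a^4+2a^3M+3a^2M^2+4aM^3+5M^4)=0$ in $t$, select the branch from $t'(M)=1/(NM^4)$, and invert by the inverse function theorem. Your discussion of the inclusion–exclusion overlap at $m=1$ (two indices simultaneously equal to $1$) correctly identifies the source of the $15M^{12}N^2$ terms, matching the paper's closed-form correction for the sextuple sum; the only slip is a sign in describing that correction (it is added back, not subtracted), but your final quadratic and branch choice agree with the paper, so the argument goes through.
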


\begin{proof}
 Denote $a^{(m)} =m!\cdot b_{m}$ for each $m\ge 0$,   and introduce formal power series $a(t) =  \sum_{m=0}^{\infty}a^{(m)}t^m/m!$. Then 
 \begin{align*}
    b_{m} &= N \sum_{ 0\le p_1,p_2,p_3 <m \atop \sum p_i = m   }^{}\prod_{i=1}^{3}b_{p_i}+ N\sum_{q_1,.., q_4 \ge 0 \atop \sum q_i = m-1}^{}\prod_{i=1}^{4}b_{q_i}+ N\sum_{0\le r_1,\dots,r_6 <m\atop \sum r_i = m+1 }^{}\prod_{i=1}^{6}b_{r_i} \qquad \text{ for } m\ge 1, \\
    a(t) &= b_0 + N\sum_{m=1}^{\infty}  \sum_{ 0\le p_1,p_2,p_3 <m \atop \sum p_i = m   }^{}\left(\prod_{i=1}^{3}b_{p_i}\right)t^m+ N\sum_{m=1}^{\infty}\sum_{q_1,.., q_4 \ge 0 \atop \sum n_i = m-1}^{}\left(\prod_{i=1}^{4}b_{q_i}\right)t^m+ N\sum_{m=1}^{\infty}\sum_{0\le r_1,\dots,r_6 <m\atop \sum r_i = m+1 }^{}\left(\prod_{i=1}^{6}b_{r_i}\right)t^m. 
 \end{align*}
 
 Let us rewrite the second summand of $a(t)$, using $b_0 = a^{(0)} = M$ and the properties of formal power series:
 \begin{align*}
     \sum_{m=1}^{\infty}\sum_{ 0\le p_1,p_2,p_3 <m \atop \sum p_i = m   }^{}&\left(\prod_{i=1}^{3}b_{p_i}\right)t^m   = \sum_{m=1}^{\infty}\left(\sum_{ 0\le p_1,p_2,p_3 \atop \sum p_i = m   }^{}\prod_{i=1}^{3}b_{p_i}-3b_{0}^2b_{m}\right)t^m \\
     = &\sum_{m=0}^{\infty}\left(\sum_{ 0\le p_1,p_2,p_3 \atop \sum p_i = m   }^{}\prod_{i=1}^{3}b_{p_i}-3b_{0}^2b_{m}\right)t^m + 2b_{0}^3
     = a(t)^3 -3b_{0}^2a(t) + 2b_{0}^3 = \left(a(t)-M\right)^2\left(a(t) + 2M\right).
\end{align*}
Analogously, we rewrite the third and fourth summands of  $a(t)$:
\begin{equation*}
    \sum_{m=1}^{\infty} \sum_{q_1,.., q_4 \ge 0 \atop \sum q_i = m-1}^{}\left(\prod_{i=1}^{4}b_{q_i}\right)t^m = ta(t)^4
    \end{equation*}
    and
    $$\sum_{m=1}^{\infty} \sum_{0\le r_1,\dots,r_6 <m\atop \sum r_i = m+1 }^{}\left(\prod_{i=1}^{6}b_{r_i}\right)t^m = \frac{1}{t} \left(a(t)^6-6 a(t) \left(5 M^8 N t+M^5\right)+5 \left(3 M^{12} N^2 t^2+6 M^9 N t+M^6\right)\right).$$
We get the following equation for $a = a(t)$:
\begin{equation}\label{eq:tofa}
\left(a^4 N+15 M^{12} N^3\right)t^2 -t (a-M) \left(1 - (a - M) (a + 2 M) N + 30 M^8 N^2\right) + a^6 N-6 a M^5 N+5 M^6 N = 0.
\end{equation}
Solving this quadratic equation with respect to $t$, we get~\eqref{eq-t(a)} 
up to the sign in front of the square root. The polynomial inside the square root tends to $\bigl(30 M^{8}N^{2}+1\bigr)^{2}-60 M^{4}N^{2}\bigl(15 M^{12}N^{2}+M^{4}\bigr)=1$ as $a\to M$; in particular, it is positive for $a$ close to $M$. Thus $t'(M)=\frac{p(M)\pm 1}{2 \left(M^4 N+15 M^{12} N^3\right)}$. Since $1/t'(M)=a'(0)=a^{(1)}=M^4N$, we indeed have a plus sign in front of the square root in~\eqref{eq-t(a)}.
By the analyticity of $t(a)$ in a neighborhood of $a(0)=M$ and the condition $t'(a(0)) =1/(M^4N)\neq 0$, its inverse function $a(t)$ is analytic and its Taylor series converges in a neighborhood of $0$.
\end{proof}

By induction on $m$, Lemmas~\ref{convergence},~\ref{l-euc},~\ref{l-one-step-euc}, and~\ref{l-reccurence-euc} imply the following. 

\begin{corollary}\label{fmlesseuc} Under the assumptions and notation of Lemmas~\ref{l-one-step-euc} and~\ref{l-reccurence-euc}, for each $m\ge 0$ we have
\begin{equation*}
  \|f^{(m)}\|\le a^{(m)}\left(
  \left\|f^{(0)}\right\|,
  C\left(\Omega, f^{(0)}\right)\right).
\end{equation*}
Hence, series~\eqref{eq-expansion} converges in the $C^{2+1/2}$ norm for 
sufficiently small $|t|$, and the sum satisfies~\eqref{eq-euclidean-crpc}.
\end{corollary}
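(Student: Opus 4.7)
The plan is to assemble the four preceding lemmas in two stages: first, an induction on $m$ establishing the pointwise bound $\|f^{(m)}\| \le a^{(m)}$; then an invocation of the Weierstrass M-test together with the converse direction of Lemma~\ref{l-euc} to extract the convergence and the PDE.

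For the induction, set $M := \|f^{(0)}\|$ and $N := C(\Omega, f^{(0)})$. The base case $m=0$ is immediate, since both sides equal $M$. For the inductive step, I would assume $\|f^{(r)}\| \le a^{(r)}(M,N)$ for all $0 \le r < m$. The key observation is that every index appearing in the three sums on the right of Lemma~\ref{l-one-step-euc} is strictly less than $m$: in the first and third sums by the explicit restriction $p_i, r_i < m$, and in the second sum because $\sum q_i = m-1$ forces $q_i \le m-1 < m$. Hence the inductive hypothesis applies to every factor, and substituting it into the bound from Lemma~\ref{l-one-step-euc} yields exactly the recurrence defining $a^{(m)}$ in Lemma~\ref{l-reccurence-euc}.

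For the convergence and the PDE, Lemma~\ref{l-reccurence-euc} guarantees that the majorant series $\sum_{m\ge 0} a^{(m)} t^m/m!$ has a strictly positive radius of convergence. Applying the Weierstrass M-test (Lemma~\ref{convergence}) with the bound $\|f^{(m)}\|_{C^{2+1/2}} \le a^{(m)}$, I conclude that \eqref{eq-expansion} converges in the $C^{2+1/2}$ norm, and in particular in $C^2$, for all sufficiently small $|t|$. Since $f^{(0)}$ has nonvanishing Hessian and $H^{(0)}=0$, Remark~\ref{rem-nonvanishing-det} gives $f^{(0)}_{xx}f^{(0)}_{yy} - (f^{(0)}_{xy})^2 < 0$. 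Combined with the fact that each $f^{(m)}$ with $m \ge 1$ is constructed to solve~\eqref{eq-elliptic}, the converse direction of Lemma~\ref{l-euc} then shows that the $C^2$-sum of~\eqref{eq-expansion} satisfies~\eqref{eq-euclidean-crpc}.

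There is no genuine obstacle here: the corollary is, by design, a formal assembly of the four preceding lemmas. The only point that requires any care is the routine bookkeeping check that the multi-indexed sums with binomial coefficients and $m$-prefactors on the right-hand side of Lemma~\ref{l-one-step-euc} match term-for-term the recurrence defining $a^{(m)}$ in Lemma~\ref{l-reccurence-euc} after the substitution $\|f^{(r)}\| \mapsto a^{(r)}$ and the identification $N = C(\Omega, f^{(0)})$; everything else is immediate.
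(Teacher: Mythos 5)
Your proposal is correct and fills in exactly the proof the paper leaves implicit: the paper's entire argument is the one line ``By induction on $m$, Lemmas~\ref{convergence},~\ref{l-euc},~\ref{l-one-step-euc}, and~\ref{l-reccurence-euc} imply the following,'' and your induction (base $m=0$ from $a^{(0)}=M$; inductive step via the recursion of Lemma~\ref{l-one-step-euc} with all indices strictly below $m$; Weierstrass M-test in the $C^{2+1/2}$ norm via Lemma~\ref{convergence} and the positive radius of convergence from Lemma~\ref{l-reccurence-euc}; then the ``if'' direction of Lemma~\ref{l-euc} using $H^{(0)}=0$ and $K^{(0)}<0$) is precisely that assembly.
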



\begin{remark}
This corollary and its proof remain true if we only assume that $\partial\Omega$ and $f^{(0)}\colon\overline\Omega\to\mathbb{R}$
are of class $C^{2+1/2}$ (instead of the analyticity), so that we get a $C^{2+1/2}$ solution to~\eqref{eq-euclidean-crpc}. 
\end{remark}

The analyticity of the resulting solution to~\eqref{eq-euclidean-crpc} is deduced from Friedman's theorem. Recall that CPRC surfaces are not allowed to have flat points according to the definition in Section~\ref{sec-contributions}.

\begin{corollary}[Analyticity of CPRC surfaces] \label{cor-crpr-analyticity}
    Any $C^{2+1/2}$ function $f\colon \overline\Omega\to \mathbb{R}$ satisfying equation~\eqref{eq-euclidean-crpc} for some $t$ and the inequality $f_{xx}f_{yy}-f^2_{xy}<0$ is real analytic in the closed domain. So, any $C^{2+1/2}$ CPRC surface with $K<0$ bounded by an analytic Jordan curve is an analytic surface with analytic boundary.
\end{corollary}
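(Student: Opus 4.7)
The plan is to parallel the proof of Corollary~\ref{cor-crpr-analyticity-isotropic}: encode \eqref{eq-euclidean-crpc} as $F(f_{xx},f_{xy},f_{yy},f_x,f_y)=0$ for a suitable analytic function $F$, verify Petrowski's ellipticity condition \eqref{eq-l-petrowski}, and apply Friedman's theorem (Theorem~\ref{th-petrowski-short}). Concretely, I would take
\begin{equation*}
F(A,B,C,P,Q):=(1+Q^2)A-2PQB+(1+P^2)C-t\sqrt{(1+P^2+Q^2)(B^2-AC)}.
\end{equation*}
By compactness of $\overline\Omega$ together with the hypothesis $f_{xx}f_{yy}-f_{xy}^2<0$, there is $\varepsilon>0$ with $f_{xy}^2-f_{xx}f_{yy}>\varepsilon^2$ on $\overline\Omega$. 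On the real domain $D'\subset\mathbb{R}^5$ defined by $B^2-AC>\varepsilon^2$ and $|F|<\varepsilon$ (the latter pinning down the branch of the square root), $F$ is real analytic and extends complex analytically to a neighborhood $D\subset\mathbb{C}^5$.

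The technical heart---and the step I expect to be hardest---is verifying that the quadratic form in $(\xi_1,\xi_2)$ appearing in \eqref{eq-l-petrowski} has negative discriminant on $D'$. Setting $S:=\sqrt{(1+P^2+Q^2)/(B^2-AC)}$, differentiation gives $F_A=(1+Q^2)+\tfrac{tC}{2}S$, $F_B=-2PQ-tBS$, and $F_C=(1+P^2)+\tfrac{tA}{2}S$. Expanding $F_B^2-4F_AF_C$ produces a $t$-linear term proportional to the combination $(1+Q^2)A-2PQB+(1+P^2)C$, which on $D'$ equals $t\sqrt{(1+P^2+Q^2)(B^2-AC)}$ by the equation $F=0$. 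This substitution telescopes all cross-terms and gives the clean identity
\begin{equation*}
F_B^2-4F_AF_C=-(1+P^2+Q^2)(4+t^2)<0,
\end{equation*}
which persists in a complex neighborhood inside $D$ by continuity. The extra $\sqrt{1+P^2+Q^2}$ factor absent in Corollary~\ref{cor-crpr-analyticity-isotropic} makes the bookkeeping heavier than in the isotropic case, but the same $F=0$ substitution again clears the obstruction. Friedman's theorem then delivers real analyticity of $f$ on $\overline\Omega$.

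For the second assertion, a $C^{2+1/2}$ CRPC surface has no flat points by our definition, so about each of its points (including boundary points) it admits a $C^{2+1/2}$ local graph representation $z=f(x,y)$ over the tangent plane after rotating coordinates; this $f$ satisfies \eqref{eq-euclidean-crpc} with $K<0$, and the first part yields local analyticity. Since analyticity is a local property and the surface is bounded by an analytic Jordan curve, patching the local analytic extensions furnishes an analytic surface with analytic boundary in the sense of Section~\ref{sec-contributions}.
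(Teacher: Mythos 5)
Your approach is essentially the paper's: same function $F$, same real domain $D'$ and complex extension, and the same appeal to Friedman's theorem (Theorem~\ref{th-petrowski-short}). The one place you diverge is in verifying ellipticity. The exact discriminant is $F_B^2-4F_AF_C = -(4+t^2)(1+P^2+Q^2) - 2tFS$ with $S=\sqrt{(1+P^2+Q^2)/(B^2-AC)}$; the paper bounds the $-2tFS$ term using $|F|<\varepsilon$ and $B^2-AC>\varepsilon^2$ to get negativity uniformly on all of $D'$, whereas you drop it via $F=0$ to obtain the clean value $-(4+t^2)(1+P^2+Q^2)$. This is algebraically tidier, but the substitution is valid only on $\{F=0\}\cap D'$, not on $D'$ as you write (on $D'$ you only have $|F|<\varepsilon$), so the phrase ``which on $D'$ equals $t\sqrt{\dots}$'' should read ``which on $\{F=0\}$ equals.'' Fortunately the image of $(f_{xx},f_{xy},f_{yy},f_x,f_y)$ over $\overline\Omega$ is a compact subset of $\{F=0\}\cap D'$, so your continuity step does yield a complex neighborhood of that compact set on which ellipticity holds, and Friedman's theorem applies there. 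The two routes give the same conclusion; the paper's uniform bound on $D'$ spares the explicit shrinking to a neighborhood of the image, at the cost of a slightly messier estimate.
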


\begin{proof} By compactness, there exists $\varepsilon>0$ such that 
$f_{xx}f_{yy}-\left(f_{xy}\right)^2 < -\varepsilon^2$. Introduce the function 
$$F(A,B,C, D, E)= \left(1 + E^2\right)A-2BDE + \left(1 + D^2\right)C-t \sqrt{\left(1 + D^2+E^2\right) \left(B^2-AC\right)},$$ 
so that~\eqref{eq-euclidean-crpc} has form $F(f_{xx},f_{xy},f_{yy}, f_{x}, f_{y}) = 0$.
The function $F(A,B,C, D, E)$ is real analytic in the domain $D'\subset\mathbb{R}^5$ given by $B^{2}-AC>\varepsilon^2$ and $|F|<\varepsilon$ (and complex analytic in its neighborhood $D''\subset\mathbb C^{5}$).

For all $(A,B,C, D, E)\in D'$ and $(\xi_1,\xi_2)\in\mathbb{R}^2\setminus\{(0,0)\}$, we have 
\begin{equation}\label{eq-detereuc}
\textstyle \frac{\partial F}{\partial A}\xi_1^2 + \frac{\partial F}{\partial B}\xi_1 \xi_2 + \frac{\partial F}{\partial C}\xi_2^2 =
\frac{2\sqrt{B^2-A C}\left((1+E^2)\xi_1^2 -2DE\xi_1 \xi_2 +(1+D^2)\xi_2^2  \right) + t \sqrt{D^2+E^2+1}\left(C \xi_1^2 -2B\xi_1 \xi_2 +A\xi_2^2\right) }{2 \sqrt{B^2-A C}} \ne 0,  
\end{equation}
because the discriminant of the quadratic form 
is
\begin{equation*}
-\left(\frac{t^2}{4}+1\right) \left(D^2+E^2+1\right) -\frac{F(A,B,C,D,E)\sqrt{D^2+E^2+1}}{2\sqrt{B^2-A C}}\cdot t
<\left(-\frac{t^2}{4}-1+ \frac{\varepsilon}{2\varepsilon}|t| \right) \left(D^2+E^2+1\right) \le -\frac{3}{4}<0.
\end{equation*}
Then~\eqref{eq-detereuc} holds in a neighborhood~$D''$ as well. By Friedman's theorem (Theorem~\ref{th-petrowski-short}), $f$ is real analytic. 
\end{proof}

\begin{proof}[Proof of Theorem~\ref{thm-euclidean}.] 
Let the minimal surface be the graph of a real analytic function $f^{(0)}\colon \overline\Omega\to\mathbb{R}$.
By Corollary~\ref{fmlesseuc}, there is a family of $C^{2+1/2}$ functions $f(x,y,t)$ satisfying~\eqref{eq-euclidean-crpc} such that $f(x,y,t)=f^{(0)}(x,y)$ for $t=0$ or $(x,y)\in\partial\Omega$.
By Remark~\ref{rem-nonvanishing-det} and a compactness argument, 
$f_{xx}f_{yy}-\left(f_{xy}\right)^2 < 0$ 
for sufficiently small $|t|$. Then by Friedman's theorem (Corollary~\ref{cor-crpr-analyticity}), $f(x,y,t)$ is real analytic in $x$ and $y$ jointly. Thus, by Osgood’s lemma, $f(x,y,t)$ is real analytic in $x, y, t$ jointly. Then $z=f(x,y,s/\sqrt{1-s})$ is the desired family of CRPC surfaces. The uniqueness of the analytic family of CRPC surfaces is proved in Corollary~\ref{cor-uniqueness-euc}.
\end{proof}

Now we deduce Corollary~\ref{thm-plateau-euc} from Theorem~\ref{thm-euclidean} and a few known results on Plateau's problem.

\begin{proof}[Proof of Corollary~\ref{thm-plateau-euc}]
Let $\Gamma$ be the Jordan curve in the corollary. Since $\Gamma$ has no more than two common points (counting multiplicities) with any $z$-parallel plane, it follows
that $\Gamma$ has no $z$-parallel tangents, and the projection of $\Gamma$ to the $xy$ plane is the boundary of a convex Jordan domain $\Omega$ with analytic boundary. 

Then by Finn's criterion \cite[Theorem 13.14]{Gilbarg-Trudinger-83}, there is a $C^2$ function $u\colon\overline\Omega\to \mathbb{R}$, whose graph $\Phi$ is a minimal surface spanning $\Gamma$. 
Since $\Gamma$ is analytic, by Lewy's theorem \cite[\S334]{nitsche-1975}, 
the minimal surface extends beyond $\Gamma$. The tangent planes 
at $\Gamma$ are not $z$-parallel because $u$ is a $C^2$ function in the closed domain $\overline\Omega$. Thus, the extended surface contains the graph $\tilde\Phi$ of a real analytic function $\tilde u\colon \tilde\Omega\to\mathbb{R}$, where $\tilde\Omega$ is a scaling of $\Omega$ centered at an interior point with a scaling factor $1+\varepsilon$ for a small $\varepsilon>0$. In particular, $u$ is a real analytic function in the closed domain~$\overline\Omega$.

Since $\Gamma$ has no more than four 
common points (counting multiplicities) with any plane,
the same is true for any analytic curve $\tilde\Gamma$ sufficiently close in $C^\infty$ norm. Applying this for $\tilde\Gamma=\partial\tilde\Phi$ and using \cite[\S404]{nitsche-1975}, we conclude that $\tilde\Phi$ has no interior flat points. Thus, $\Phi$ has no flat points (even on $\Gamma$).

Application of Theorem~\ref{thm-euclidean} to the surface $\Phi$ concludes the proof.
\end{proof}

It would be interesting to investigate the universality of this construction:

\begin{problem} Is each CRPC surface contained in one family of CRPC surfaces with a minimal surface? 
\end{problem}

It is interesting when Plateau's problem for 
CRPC surfaces can have uncountably many solutions:

\begin{problem} (Cf.~Example~\ref{ex-catenoid}) Is there an analytic curve that can be spanned by uncountably many surfaces with the given ratio $a$ of principal curvatures, for each $a\ne -1$ close to $-1$? 
\end{problem}

\section{Approximate CRPC surfaces}\label{sec-proofs-variations}



In this section, we 
derive a closed-form expression for \emph{second-order approximation} of an isotropic CRPC surface, that is, for the first three terms of~\eqref{eq-expansion} (Proposition~\ref{p-approximation}). This expression 
makes sense even in the presence of a flat point, but of even ``multiplicity'' only. Note that our second-order approximation preserves the boundary of the surface only to zeroth order.
See Fig.~\ref{fig:3}.

\begin{figure}[htb]
    \centering
\begin{overpic}
[width=0.5\linewidth]{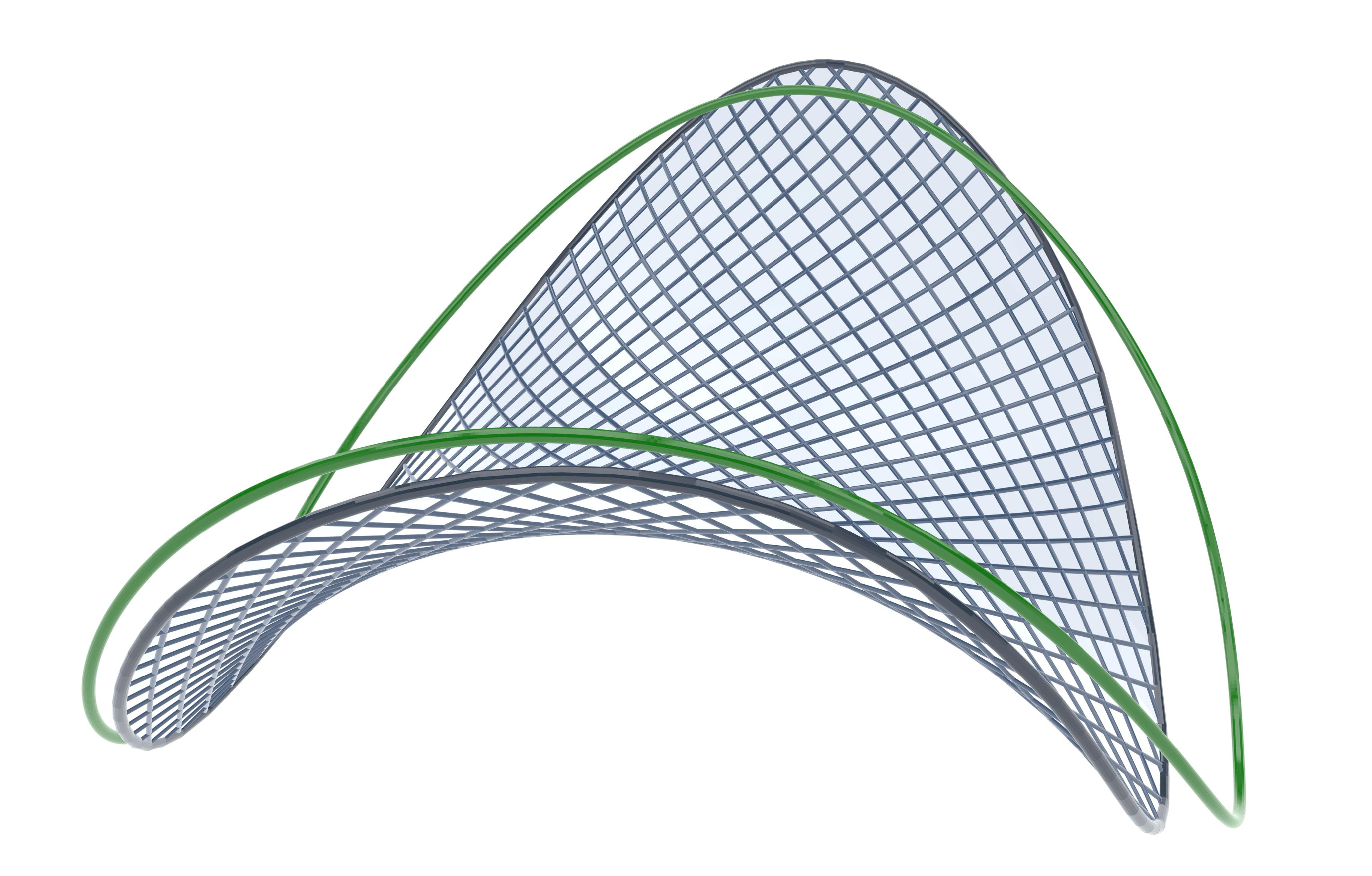}
\small
 \end{overpic}
\caption{An approximate solution to Plateau's problem for CPRC surfaces.
The asymptotic lines of the surface (gray) intersect at a constant angle ($80^\circ$). Fitting the prescribed boundary curve (green) is not perfect because our optimization algorithm gives higher preference to achieving the prescribed intersection angle (and positions of flat points if present).  
}
\label{fig:3}
\end{figure}

This expression is simple and elegant when using the complex coordinate $w=x+iy$ in the $xy$ plane. Using the Wirtinger derivatives \(f_{w}:=\tfrac12(f_{x}-if_{y})\), \(f_{\bar w}:=\tfrac12(f_{x}+if_{y})\), the isotropic mean and Gaussian curvatures of a surface $z=f(x,y)$ can be written as 
$
H=\frac{f_{xx}+f_{yy}}{2}=2\,f_{w\bar w}$ and $
K=f_{xx}f_{yy}-f_{xy}^{2}=4\,(f_{w\bar w}^{2}-f_{ww}f_{\bar w\bar w}).
$
Then the equation $2H=t\sqrt{-K}$ of isotropic CRPC surfaces takes an elegant form
\begin{equation}\label{eq-complex-crpc}
  f_{w\bar w}=\tilde t\,\sqrt{\,f_{ww}f_{\bar w\bar w}\,}, 
\end{equation}
where $\tilde t=t/\sqrt{4+t^2}$. In what follows, we omit the tilde and write $t$ instead of $\tilde t$ for simplicity. We construct an approximate solution of equation~\eqref{eq-complex-crpc} 
as follows. 

\begin{proposition}\label{p-approximation}
    Let $f^{(0)}$ be a harmonic function in a Jordan domain with a non-vanishing Hessian. Let $g(w)$ be a complex analytic function with real part $f^{(0)}(w)/2$, 
    and $h(w)$ be an antiderivative of a continuous branch of~$\sqrt{g''(w)}$.
    Then the function
    \begin{equation}\label{eq:approx}
        f(w,t) = 2\mathrm{Re}\, g(w) + |h(w)|^2\cdot t + \mathrm{Re}\,\left(h(w)^2\right)\log |h'(w)|\cdot t^2
    \end{equation}
    satisfies 
\begin{equation*}
     f_{w\bar w}=t\,\sqrt{\,f_{ww}f_{\bar w\bar w}\,}+ o\left(t^2\right) 
     \qquad \text{as } t\to 0.
\end{equation*}
\end{proposition}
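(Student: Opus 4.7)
The plan is to verify the asymptotic identity by substituting \eqref{eq:approx} into equation~\eqref{eq-complex-crpc} and matching Taylor coefficients in~$t$ through order~$t^{2}$. Write $f = f^{(0)} + t f^{(1)} + t^{2} f^{(2)}$ with $f^{(0)} = 2\mathrm{Re}\,g$, $f^{(1)} = |h|^{2}$, and $f^{(2)} = \mathrm{Re}(h^{2})\log|h'|$. Since all three summands are real-valued, $f_{\bar w\bar w} = \overline{f_{ww}}$, hence $\sqrt{f_{ww}f_{\bar w\bar w}} = |f_{ww}|$ and the target identity reduces to $f_{w\bar w} = t|f_{ww}| + o(t^{2})$. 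A crucial observation is that, because $|f_{ww}|$ is multiplied by~$t$, only $f_{ww}$ modulo $O(t^{2})$ is needed on the right side; the term $f^{(2)}$ therefore enters the argument only through the left side.

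On the left side, the $t^{0}$-coefficient vanishes by harmonicity of $2\mathrm{Re}\,g$, and the $t^{1}$-coefficient is immediate: $(h\bar h)_{w\bar w} = h'\overline{h'} = |h'|^{2}$. For the $t^{2}$-coefficient I will use the identity
\[
2\,\mathrm{Re}(F)\mathrm{Re}(G) = \mathrm{Re}(FG) + \mathrm{Re}(F\overline{G}),
\]
valid for any holomorphic $F,G$, together with the facts that $\mathrm{Re}(FG)$ is harmonic and $(\mathrm{Re}(F\overline{G}))_{w\bar w} = \mathrm{Re}(F'\overline{G'})$ by holomorphic/antiholomorphic separation. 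Applied to $F = h^{2}$ and $G = \log h'$ (so that $\mathrm{Re}(G) = \log|h'|$), this yields
\[
(\mathrm{Re}(h^{2})\log|h'|)_{w\bar w} = \tfrac12\mathrm{Re}\!\bigl(2hh'\cdot \overline{h''/h'}\bigr) = \frac{\mathrm{Re}\!\bigl(h(h')^{2}\overline{h''}\bigr)}{|h'|^{2}}.
\]

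On the right side, expand $f_{ww} = g'' + t\,h''\bar h + O(t^{2})$ and use the defining identity $(h')^{2} = g''$ to compute
\[
|f_{ww}|^{2} = f_{ww}\overline{f_{ww}} = |h'|^{4} + 2t\,\mathrm{Re}\!\bigl((h')^{2}\overline{h''}h\bigr) + O(t^{2}),
\]
whence $|f_{ww}| = |h'|^{2} + t\,\mathrm{Re}\!\bigl(h(h')^{2}\overline{h''}\bigr)/|h'|^{2} + O(t^{2})$. Multiplying by~$t$ reproduces precisely the $t$- and $t^{2}$-coefficients just found on the left side, with residual $O(t^{3}) = o(t^{2})$.

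The only nonroutine step — and the one dictating the peculiar form of the second-order term in~\eqref{eq:approx} — is the Laplacian computation on the left side. The ansatz $\mathrm{Re}(h^{2})\log|h'|$ is essentially forced: one seeks a real-valued function whose Wirtinger Laplacian equals the first-order perturbation of $|f_{ww}|$, and the logarithmic factor enters because $(\log h')' = h''/h'$ combines with $(h^{2})' = 2hh'$ through the separation identity above to produce exactly the combination $h(h')^{2}\overline{h''}/|h'|^{2}$, with the denominator $|h'|^{2}$ appearing automatically from the factor $\overline{h'}$ in $\overline{h''/h'}$.
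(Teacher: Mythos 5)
Your proof is correct and takes essentially the same approach as the paper: matching Taylor coefficients of both sides of~\eqref{eq-complex-crpc} through order $t^{2}$. The paper organizes this via Lemma~\ref{l-complex-poisson}, which abstracts the order-by-order recursion~\eqref{eq-complex-poisson}, and then checks that $f^{(0)}, f^{(1)}, f^{(2)}$ satisfy it; you instead expand directly and note that $\sqrt{f_{ww}f_{\bar w\bar w}} = |f_{ww}|$ (valid since $f$ is real), which is an equivalent bookkeeping device. The key computation — the Wirtinger Laplacian of $\mathrm{Re}(h^{2})\log|h'|$ — is identical in both; your route through the identity $2\,\mathrm{Re}(F)\mathrm{Re}(G) = \mathrm{Re}(FG) + \mathrm{Re}(F\overline{G})$ is a clean way to organize it, while the paper simply differentiates the expression directly and arrives at $\bar h\,\bar h'\,h''/h' + h\,h'\,\bar h''/\bar h'$, which simplifies to the same quantity.
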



The proof relies on the following analogue of Lemma~\ref{l-poisson} for equation~\eqref{eq-complex-crpc} proved similarly.

\begin{lemma} \label{l-complex-poisson} 
Assume that $f^{(0)}(w),\dots,f^{(n)}(w)$ are $C^2$ functions in a Jordan domain such that $f^{(0)}_{ww}f^{(0)}_{\bar w\bar w}>0$. Then 
$f(w,t):=\sum_{m=0}^n f^{(m)}(w)t^m/m!$
satisfies~$f_{w\bar w}=t\,\sqrt{\,f_{ww}f_{\bar w\bar w}\,}+ o\left(t^n\right)$ as $t\to 0$ if and only if 
\begin{equation}\label{eq-complex-poisson}
f^{(m)}_{w\bar w}=\frac{
m\sum_{r=0}^{m-1}\binom{m-1}{r}
f^{(r)}_{ww}f^{(m-r-1)}_{\bar w\bar w}
-\frac{1}{m+1}\sum_{r=2}^{m-1}\binom{m+1}{r}
f^{(r)}_{w\bar w}f^{(m-r+1)}_{w\bar w}
}{2^{1-\delta_{m1}}\sqrt{f^{(0)}_{ww}f^{(0)}_{\bar w\bar w}}}
\qquad\text{for $m=0,\dots,n$.}
\end{equation}
\end{lemma}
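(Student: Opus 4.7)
The plan is to mirror the proof of Lemma~\ref{l-poisson} in the complex setting, using the squared form of the equation together with the Leibniz rule. Since each $f^{(m)}$ is $C^2$ and the sum defining $f(w,t)$ is a polynomial in $t$ of degree $n$, the partial derivatives $f_{ww}$, $f_{w\bar w}$, $f_{\bar w\bar w}$ are polynomials in $t$, hence real analytic in $t$; moreover $f_{ww}f_{\bar w\bar w}>0$ for small $|t|$ by continuity from $f^{(0)}_{ww}f^{(0)}_{\bar w\bar w}>0$, so both sides of~\eqref{eq-complex-crpc} are smooth near $t=0$. The asymptotic equality $f_{w\bar w}=t\sqrt{f_{ww}f_{\bar w\bar w}}+o(t^n)$ is equivalent to matching the Taylor coefficients of both sides in $t$ up to order $n$, which I would reformulate after squaring as $f_{w\bar w}^{\,2}=t^2 f_{ww}f_{\bar w\bar w}+o(t^{n+1})$; this equivalence holds provided $f^{(0)}_{w\bar w}=0$ and $f^{(1)}_{w\bar w}$ is the \emph{positive} square root of $f^{(0)}_{ww}f^{(0)}_{\bar w\bar w}$, both of which emerge automatically from the first two Taylor coefficients of the unsquared equation.

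Differentiating the squared identity $m+1$ times with respect to $t$ at $t=0$ and applying the Leibniz rule produces
\begin{equation*}
\sum_{r=0}^{m+1}\binom{m+1}{r}f^{(r)}_{w\bar w}f^{(m+1-r)}_{w\bar w}
=m(m+1)\sum_{r=0}^{m-1}\binom{m-1}{r}f^{(r)}_{ww}f^{(m-1-r)}_{\bar w\bar w},
\end{equation*}
since $(t^2)^{(r)}|_{t=0}=2\delta_{r,2}$ collapses the right-hand sum to its single surviving term. For $m=0$ this forces $f^{(0)}_{w\bar w}=0$, which is precisely~\eqref{eq-complex-poisson} at $m=0$. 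For $m=1$ it gives $2(f^{(1)}_{w\bar w})^2=2f^{(0)}_{ww}f^{(0)}_{\bar w\bar w}$, and the positivity convention selects $f^{(1)}_{w\bar w}=\sqrt{f^{(0)}_{ww}f^{(0)}_{\bar w\bar w}}$, matching~\eqref{eq-complex-poisson} at $m=1$.

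For $m\ge 2$, the boundary terms $r=0$ and $r=m+1$ on the left-hand side vanish thanks to $f^{(0)}_{w\bar w}=0$, and the terms $r=1$ and $r=m$ combine to $2(m+1)f^{(1)}_{w\bar w}f^{(m)}_{w\bar w}$; isolating $f^{(m)}_{w\bar w}$ and dividing by $2(m+1)f^{(1)}_{w\bar w}=2(m+1)\sqrt{f^{(0)}_{ww}f^{(0)}_{\bar w\bar w}}$ reproduces~\eqref{eq-complex-poisson} verbatim. The converse direction is the same calculation run in reverse: assuming~\eqref{eq-complex-poisson} for all $m=0,\dots,n$, an induction on $m$ shows that the Taylor coefficients of $f_{w\bar w}^{\,2}$ and of $t^2 f_{ww}f_{\bar w\bar w}$ coincide through order $t^{n+1}$, whence $f_{w\bar w}=t\sqrt{f_{ww}f_{\bar w\bar w}}+o(t^n)$ follows by taking square roots with the sign determined at $m=1$.

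The main technical point is the sign convention at the base step $m=1$: the positive branch is forced by the sign of $t$ in~\eqref{eq-complex-crpc} and is consistent with the explicit form of~\eqref{eq-complex-poisson}. Beyond this, the argument is purely algebraic bookkeeping with binomial coefficients, already rehearsed in Lemma~\ref{der-mean} and Lemma~\ref{l-poisson}, so I do not anticipate any substantive analytic obstacle.
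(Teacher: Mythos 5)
Your proof is correct and follows exactly the route the paper intends: the paper gives no separate proof of this lemma (it only says it is ``proved similarly'' to Lemma~\ref{l-poisson}), and your argument is precisely that adaptation — square the equation, differentiate $m+1$ times via the Leibniz rule, use $f^{(0)}_{w\bar w}=0$ and the positive branch at first order to isolate $f^{(m)}_{w\bar w}$. The only nitpick is that the once-differentiated identity $2f^{(0)}_{w\bar w}f^{(1)}_{w\bar w}=0$ does not by itself force $f^{(0)}_{w\bar w}=0$; that comes from evaluating the (unsquared or squared) equation at $t=0$, which you do note earlier in the paragraph.
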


\begin{proof}[Proof of Proposition~\ref{p-approximation}]
Since $f^{(0)}$ is harmonic in a simply-connected domain, there indeed exists an analytic function $g(w)$ such that $f^{(0)}=g+\overline{g}$. Since the Hessian of $f^{(0)}$ is non-vanishing, it follows that $g''(w)$ is, because $4|g''(w)|^2=4f^{(0)}_{ww}f^{(0)}_{\bar w\bar w}=\left(f^{(0)}_{xx}\right)^2+\left(f^{(0)}_{xy}\right)^2\ne 0$. Thus, there indeed exists a continuous branch of $\sqrt{g''(w)}$, hence its antiderivative $h(z)$. 


By Lemma~\ref{l-complex-poisson}, it suffices to show that $C^2$ functions 
$f^{(0)}(w)=2\mathrm{Re}\,g(w)$, $f^{(1)}(w)=|h(w)|^{2}$, and
$f^{(2)}(w)=2\,\mathrm{Re}\!\left(h(w)^{2}\right)\log|h'(w)|$
satisfy~\eqref{eq-complex-poisson}, where $(h'(w))^{2}=g''(w)$ (a continuous branch is fixed).

Since $g$ is complex analytic, it follows that $f^{(0)}_{w\bar w}=0$, hence  $f^{(0)}(w)$  satisfies~\eqref{eq-complex-poisson} for $m=0$.
Next, 
\[
f^{(1)}_{w\bar w}=h'\bar h'=|g''|=\sqrt{f^{(0)}_{ww}f^{(0)}_{\bar w\bar w}},
\]
thus $f^{(1)}(w)$ satisfies~\eqref{eq-complex-poisson} for $m=1$.
Finally, $f^{(2)}(w)$ satisfies~\eqref{eq-complex-poisson} for $m=2$ because
\[
f^{(2)}_{w\bar w}
=\bar h\,\bar h'\,\frac{h''}{h'}+h\,h'\,\frac{\bar h''}{\bar h'}
=\frac{f^{(0)}_{ww}f^{(1)}_{\bar w\bar w}+f^{(1)}_{ww}f^{(0)}_{\bar w\bar w}}
      {\sqrt{f^{(0)}_{ww}f^{(0)}_{\bar w\bar w}}}.\\[-0.6cm]
\]
%
%
\end{proof}


Nothing like Proposition~\ref{p-approximation} 
is known for the approximation order higher than $2$, nor in Euclidean geometry.
However, formula~\eqref{eq:approx}  
is still applicable if the Hessian of $f^{(0)}(w)$ has a single zero of even ``multiplicity'', that is, $g''(w)$ has a single zero of even multiplicity. One just chooses the antiderivative $h(w)$ vanishing at this zero making $f^{(1)}(w)$ and $f^{(2)}(w)$ well-defined. 
For several zeros of even multiplicity, a reasonable approximation is obtained by replacing $\log|h'(w)|$ with $\log(|h'(w)|+t)$ in~\eqref{eq:approx}. For the design of examples, it is convenient to prescribe a polynomial $h'(w)$ first, then compute $h(w)$ and $g(w)$ by termwise integration. This method is realized in~\cite{yorov2025}. Let us illustrate it with two examples.


\begin{example}\label{ex:fig:5}
Let $f^{(0)}(w)=(w^4+\bar w^4)/12$ and $t=1/2$. Under the notation of Proposition~\ref{p-approximation}, we find $g(w)=w^4/12$, $h(w)=\int \sqrt{g''(w)}\,dw=w^2/2$, and $f(w,t)=(w^4+\bar w^4)(8+3\log|w|)/96+|w|^4/8$. 
Remeshing and optimization (neither of which is simple~\cite{yorov2025}) then leads to the asymptotic gridshell
in Fig.~\ref{fig:5}, right.
Since \(g''(w)\) has a zero of multiplicity \(2\) at \(w=0\), the flat point at the origin has valence \(8\).




    
\end{example}



\begin{example}\label{ex-no-xxtt} 
Let $f^{(0)}(w)=(w^3+\bar w^3)/2$ be the function from Example~\ref{ex-no-xxxt}. Under the notation of Proposition~\ref{p-approximation}, we find $g(w)=w^3/2$ and $h(w)=\int \sqrt{g''(w)}\,dw=\frac{2}{\sqrt{3}}w^{3/2}$.
Although $h(w)$ has no continuous branch in a vicinity of $w=0$, the expression $f(w,t)=(w^3+\bar w^3)(3+2t^2\log|3w|)/6+4|w|^3t/3$ given by~\eqref{eq:approx} is still well-defined. 
Notice that the coefficient at $t$ does not vanish for $w\ne0$.

To show the effect of the boundary condition, adjust the coefficient at $t$ by a harmonic polynomial:
\begin{align*}
f^{(1)}(w)&=\frac{4}{3}\left(|w|^3+\frac{1}{2}w^2+\frac{1}{2}\bar w^2-1\right).\\
\intertext{ 
The resulting function still satisfies~\eqref{eq-complex-poisson}.
It vanishes for $|w|^3+w^2/2+\bar w^2/2-1=0$,
which is the equation of an analytic Jordan curve.
Thus $f^{(1)}$ is the unique solution of~\eqref{eq-complex-poisson} for $m=1$ in the domain bounded by the curve, vanishing on the boundary.
\endgraf
Now by~\eqref{eq-complex-poisson} for $m=2$ we compute}
f^{(2)}_{w\bar w}(w)&=\frac{4}{3}\cdot\frac{w+\bar w}{|w|}+\frac{w^3+\bar w^3}{|w|^2} \qquad\text{for }w\ne 0,\\
\intertext{
which is discontinuous at the origin. In other words, $\left.f_{xxtt}+f_{yytt}\right|_{t=0}$ is discontinuous at the origin. 
\endgraf
We can still find a continuous solution, which is not $C^2$:
}
f^{(2)}(w)&=\frac{16}{9}(w+\bar w)|w|+\frac{2}{3}(w^3+\bar w^3)\log|w|+\text{a harmonic function},
\end{align*}
where the second summand is extended by $0$ at the origin, and the third one is found by solving the Laplace equation with the Dirichlet boundary condition. We observe more singular behavior here.
\end{example}

The idea of starting with an analog in isotropic geometry goes far beyond Plateau's problem. 
See, for instance, \cite{isometric-isotropic, pirahmad2024area,pirahmad2025,yorov2025} for an analogous approach to isometries of surfaces, both smooth and discrete.

\bigskip

\textbf{Acknowledgements.} The authors are grateful to 
I.C.~Dolcetta, G.~Di Fazio, D.~Gomes, S.~Melikhov, E.~Morozov, 
A.~Sageman-Furnas, J.M.~Urbano for useful discussions.

\bibliography{sn-bibliography}



\end{document}